\documentclass[12pt]{amsart}

\usepackage[english]{babel}
\usepackage[utf8x]{inputenc}
\usepackage[T1]{fontenc}

\usepackage[margin=1.0in]{geometry}


\usepackage{amsmath}
\usepackage{amstext}
\usepackage{amsfonts}
\usepackage{amssymb, dsfont}
\usepackage{amsthm}
\usepackage{amsrefs}
\usepackage{mathtools}
\usepackage{color}
\usepackage{graphicx}
\usepackage[colorlinks=true, allcolors=blue]{hyperref}
\usepackage{float}
\allowdisplaybreaks
\usepackage{tikz-cd}
\usepackage{amsbsy}
\usepackage{amscd}
\usetikzlibrary{matrix,arrows,decorations.pathmorphing}
\usepackage{enumitem}
\usepackage{setspace}

\usepackage{amsrefs}

\newcommand\numberthis{\addtocounter{equation}{1}\tag{\theequation}}

\newtheorem{thm}{Theorem}[section]
\newtheorem{lem}[thm]{Lemma}
\newtheorem{cor}[thm]{Corollary}
\newtheorem{prop}[thm]{Proposition}

\theoremstyle{definition}

\theoremstyle{definition}

\theoremstyle{definition}
\newtheorem{defn}[thm]{Definition}
\theoremstyle{definition}
\newtheorem{remark}[thm]{Remark}

\newcommand{\mc}[1]{\mathcal{#1}}
\newcommand{\e}[1]{\emph{#1}}

\newcommand{\la}{\langle}
\newcommand{\ra}{\rangle}

\newcommand{\rmv}[1]{}

\newcommand{\LO}{L^1(G)}

\newcommand{\LT}{L^2(G)}

\newcommand{\LI}{L^{\infty}(G)}

\newcommand{\BH}{\mc{B}(H)}

\newcommand{\vphi}{\varphi}

\newcommand{\al}{\alpha}
\newcommand{\wtal}{\widetilde{\alpha}}

\newcommand{\lm}{\lambda}

\newcommand{\om}{\omega}

\newcommand{\ten}{\otimes}
\newcommand{\oten}{\overline{\otimes}}
\newcommand{\pten}{\ten^{\pi}}
\newcommand{\opten}{\widehat{\ten}}
\newcommand{\hten}{\otimes^h}

\newcommand{\whten}{\otimes^{w^*h}}

\newcommand{\id}{\textnormal{id}}

\newcommand{\Ad}{\mathrm{Ad}}

\newcommand{\ep}{\varepsilon}

\providecommand{\norm}[1]{\lVert#1\rVert}



\newcommand{\supp}{\operatorname{supp}}

\newcommand{\bC}{{\mathbb{C}}}


\renewcommand{\P}{{\mathcal{P}}}










\newcommand{\acts}{\curvearrowright}

\begin{document}

\title[]{Amenable dynamical systems over locally compact groups}
\author{Alex Bearden}
\email{cbearden@uttyler.edu}
\address{Department of Mathematics, University of Texas at Tyler, Tyler, TX 75799}
\author{Jason Crann}
\email{jasoncrann@cunet.carleton.ca}
\address{School of Mathematics and Statistics, Carleton University, Ottawa, ON K1S 5B6}

\keywords{Dynamical systems, crossed products; locally compact groups; amenable actions}
\subjclass[2010]{47L65, 46L55, 46L07}


\begin{spacing}{1.0}

\maketitle
\begin{abstract} We establish several new characterizations of amenable $W^*$- and $C^*$-dynamical systems over arbitrary locally compact groups. In the $W^*$-setting we show that amenability is equivalent to (1) a Reiter property and (2) the existence of a certain net of completely positive Herz-Schur multipliers of $(M,G,\alpha)$ converging point weak* to the identity of $G\bar{\ltimes}M$. In the $C^*$-setting, we prove that amenability of $(A,G,\alpha)$ is equivalent to an analogous Herz-Schur multiplier approximation of the identity of the reduced crossed product $G\ltimes A$, as well as a particular case of the positive weak approximation property of B\'{e}dos and Conti \cite{BC3} (generalized the locally compact setting). When $Z(A^{**})=Z(A)^{**}$, it follows that amenability is equivalent to the 1-positive approximation property of Exel and Ng \cite{EN}. In particular, when $A=C_0(X)$ is commutative, amenability of $(C_0(X),G,\alpha)$ coincides with topological amenability the $G$-space $(G,X)$. Our results answer 2 open questions from the literature; one of Anantharaman--Delaroche from \cite{AD87}, and one from recent work of Buss--Echterhoff--Willett \cite{BEW}.
\end{abstract}

\section{Introduction} 

Amenability and its various manifestations have played an important role in the study of dynamical systems and their associated operator algebras. Zimmer introduced a dynamical version of amenability \cite{Zim} of an action of a locally compact group on a standard measure space through a generalization of Day's fixed point criterion, which has proven very useful in ergodic theory and von Neumann algebras. 

Motivated by the structure of crossed products, Anantharaman-Delaroche generalized Zimmer's notion of amenability to the level of $W^*$-dynamical systems $(M,G,\alpha)$ \cite{ADI}. In \cite[Th\'{e}or\`{e}me 3.3]{AD87} she characterized amenability of $(M,G,\alpha)$ with $G$ discrete through a Reiter type property involving asymptotically $G$-invariant functions in $C_c(G,M)$, generalizing Reiter's condition for amenable groups. She also introduced a notion of amenability for discrete $C^*$-dynamical systems $(A,G,\alpha)$, and showed, among other things, that a commutative discrete $C^*$-dynamical system $(C_0(X),G,\alpha)$ is amenable precisely when the transformation groupoid $G\ltimes X$ is topologically amenable in the sense of Renault \cite{Renault}.

Various approaches to amenability for non-discrete $C^*$-dynamical systems have been studied, including amenable transformation groups (e.g., \cite{AD}) and the approximation property of Exel and Ng \cite{EN}. Recently, a notion of amenability for arbitrary $C^*$-dynamical systems was introduced by Buss, Echterhoff and Willett \cite{BEW}, who performed an in-depth study of this notion in relation to amenability of the universal $W^*$-dynamical system \cite{I}, measurewise amenability, and the weak containment problem (among other things).

In this work we establish several new characterizations of amenable $W^*$- and $C^*$-dynamical systems over arbitrary locally compact groups. For $W^*$-systems we generalize \cite[Th\'{e}or\`{e}me 3.3]{AD87} to the locally compact setting, giving a Reiter property for arbitrary amenable $(M,G,\alpha)$ (see Theorem \ref{t:Reiter}). Our approach relies on a continuous version of \cite[Lemme 3.1]{AD87}, whose validity was asked by Anantharaman--Delaroche in that paper. We therefore answer this question in the affirmative. We also characterize amenability of arbitrary $(M,G,\alpha)$ through a ``fundamental unitary'' $W_\alpha$ associated to the action, and through Herz-Schur multipliers on the crossed product $G\bar{\ltimes}M$ \cite{BC,MTT,MSTT}. Our results in this context can be summarized by:

\begin{thm}\label{t:introW*} Let $(M,G,\alpha)$ be a $W^*$-dynamical system with $M\subseteq\BH$. The following conditions are equivalent:
\begin{enumerate}
\item $(M,G,\alpha)$ is amenable; 
\item there exists a net $(\xi_i)$ in $C_c(G,Z(M)_c)$ such that
\begin{enumerate}
\item $\la\xi_i,\xi_i\ra=1$ for all $i$;
\item $\la\xi_i, (\lm_s\ten\alpha_s)\xi_i\ra\rightarrow 1$ weak*, uniformly on compact subsets of $G$.
\end{enumerate}
\item there exists a net $(\xi_i)$ in $C_c(G,Z(M)_c)$ such that
\begin{enumerate}
\item $\la\xi_i,\xi_i\ra=1$ for all $i$;
\item $\norm{W_\alpha(\xi_i\ten_\alpha\eta)-\xi_i\cdot\eta}_{L^2(G\times G,H)}\rightarrow0$, $\eta\in L^2(G,H)$;
\end{enumerate}
\item there exists a net $(\xi_i)$ in $C_c(G,M_c)$ such that
\begin{enumerate}
\item $\la\xi_i,\xi_i\ra=1$ for all $i$;
\item $\Theta(h_{\xi_i})\rightarrow\id_{G\bar{\ltimes} M}$ point weak*,
\end{enumerate}
where $h_{\xi_i}(s)(a)=\la\xi_i,(1\ten a)(\lm_s\ten\alpha_s)\xi_i\ra$, are the associated completely positive Herz-Schur multipliers in the sense of \cite{MTT}, and $\Theta(h_{\xi_i})$ are the induced mappings on $G\bar{\ltimes}M$. 
\end{enumerate}
\end{thm}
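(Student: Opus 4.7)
The plan is to prove the cycle $(1) \Rightarrow (2) \Rightarrow (4) \Rightarrow (1)$ together with the separate equivalence $(2) \Leftrightarrow (3)$. The implication $(1) \Leftrightarrow (2)$ is precisely the Reiter-type characterization of \thmref{t:Reiter}, the locally compact analogue of \cite[Th\'{e}or\`{e}me 3.3]{AD87} established earlier in the paper, and I would invoke it directly rather than reprove it.

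For $(2) \Leftrightarrow (3)$ the argument is a direct computation with the fundamental unitary $W_\alpha$. Expanding
\[
\norm{W_\alpha(\xi_i \ten_\alpha \eta) - \xi_i \cdot \eta}^2
\]
yields, after routine manipulation, an integral against a density determined by $\eta$ of the scalar quantity $2\re\om(\la \xi_i, (\lm_s \ten \alpha_s)\xi_i\ra)$ for appropriate vector states $\om$ on $M$. Reducing first to compactly supported $\eta$ via the uniform bound afforded by $\la\xi_i,\xi_i\ra = 1$, the uniform weak*-convergence on compacta in (2)(b) combined with dominated convergence produces (3)(b); running the computation in reverse, with $\eta$ concentrated in shrinking neighborhoods, recovers (2)(b) from (3)(b). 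For $(2) \Rightarrow (4)$, since $Z(M)_c \subseteq M_c$ the very same net is admissible for (4); within the framework of \cite{MTT} each $\Theta(h_{\xi_i})$ is completely contractive, so point weak*-convergence to the identity of $G\bar{\ltimes}M$ reduces by density and uniform boundedness to testing on generators of the form $\lm_t \ten 1$ and $1 \ten a$, which follows directly from $h_{\xi_i}(t)(a) \to a$ weak* in $M$, uniformly on compacta in $t$.

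The main obstacle is $(4) \Rightarrow (1)$. The plan is to use the Herz-Schur multiplier approximation on $G\bar{\ltimes}M$ to produce the $\alpha$-equivariant conditional expectation from $L^\infty(G)\bar{\ten}M$ (with its diagonal $G$-action) onto $\alpha(M)$ that characterizes amenability in the sense of \cite{ADI}. Each $\xi_i \in C_c(G,M_c)$ with $\la\xi_i,\xi_i\ra = 1$ determines, via the Hilbert $M$-module structure on $L^2(G,H)$, a normal ucp contraction $P_{\xi_i}$ with range in $M$; the hypothesis $\Theta(h_{\xi_i}) \to \id$ on the crossed product translates into asymptotic $G$-equivariance of $P_{\xi_i}$ when tested against elements of $G\bar{\ltimes}M$, and a standard weak*-compactness and convexity argument extracts a genuine $G$-equivariant expectation. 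The technical subtlety is that the $\xi_i$ take values in $M_c$ rather than $Z(M)_c$, so one must either first center them via the canonical conditional expectation onto $Z(M)$ (using that $\alpha$ preserves the center) or else work directly with invariant means, thereby bypassing the Reiter formulation (2) on the return leg of the cycle.
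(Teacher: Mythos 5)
Your overall architecture is close to the paper's (which proves $(1)\Leftrightarrow(2)$ via Theorem \ref{t:Reiter} and then runs $(1)\Rightarrow(3)\Rightarrow(4)\Rightarrow(1)$ in Theorem \ref{t:W*amen}), and your treatments of $(1)\Leftrightarrow(2)$ and of $(2)\Rightarrow(3)$ coincide with the paper's. But there is a genuine gap in your $(2)\Rightarrow(4)$: point weak* convergence $\Theta(h_{\xi_i})\to\id_{G\bar{\ltimes}M}$ does \emph{not} reduce ``by density and uniform boundedness'' to testing on the generators $\alpha(x)(\lm_s\ten 1)$. Uniform boundedness of $\norm{\Theta(h_{\xi_i})}_{cb}$ lets you pass from a \emph{norm}-dense set of test elements $x$ (or from a norm-dense set of functionals in the predual) to everything, but the linear span of the generators is only weak*-dense in the von Neumann crossed product, and a uniformly bounded net of maps converging point weak* on a weak*-dense subspace need not converge elsewhere (consider $\Phi_n(x)=x(n)1$ on $\ell^\infty(\bN)$, which converges to $0$ on $c_{00}$ but not at $1$). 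This is exactly why the paper routes the argument through condition (3): for $\xi_i$ with values in $Z(M)_c$, Proposition \ref{p:CP2} gives $\Theta(h_{\xi_i})(x)=(\om_{\xi_i}\ten_\alpha\id)(W_\alpha^*(1\ten x)W_\alpha)$, so that $\la\eta,\Theta(h_{\xi_i})(x)\eta\ra-\la\eta,x\eta\ra$ is controlled by $\norm{W_\alpha(\xi_i\ten_\alpha\eta)-\xi_i\cdot\eta}$ \emph{simultaneously for all} $x\in G\bar{\ltimes}M$, and vector functionals are norm-total in the predual. You already establish the needed estimate in your $(2)\Rightarrow(3)$ step; the point is that you cannot avoid using it for arbitrary $x$.

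Two secondary issues. First, your $(3)\Rightarrow(2)$ by ``running the computation in reverse with $\eta$ concentrated in shrinking neighborhoods'' only yields that $t\mapsto\om\big(2-2\mathrm{Re}\la\xi_i,\wtal_t\xi_i\ra\big)$ tends to $0$ in $L^1_{\mathrm{loc}}$; upgrading this to pointwise, let alone uniform-on-compacta, convergence requires an equicontinuity of the family $t\mapsto\la\xi_i,\wtal_t\xi_i\ra$ that is essentially the statement being proved. The paper sidesteps this by closing the loop through $(4)\Rightarrow(1)\Rightarrow(2)$ rather than proving $(3)\Rightarrow(2)$ directly. Second, in $(4)\Rightarrow(1)$ the ``canonical conditional expectation onto $Z(M)$'' you invoke does not exist for a general von Neumann algebra, so that repair is unavailable; fortunately no centering is needed: since $\la\xi_i,\xi_i\ra=1$, the maps $F\mapsto\la\xi_i,(1\ten F)\xi_i\ra$ are already unital and completely positive with $1\ten x\mapsto h_{\xi_i}(e)(x)\to x$, and the paper concludes by extracting $h_{\xi_i}(s)(x)\to x$ weak* from Proposition \ref{p:CP} and citing the implication $(7)\Rightarrow(8)$ of \cite[Proposition 3.12]{BEW}, which is the weak*-compactness and asymptotic-equivariance argument you sketch.
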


The equivalence between (1) and (4) in Theorem \ref{t:W*amen} may be viewed as a dynamical systems analogue of \cite[Theorem 1.13]{HK}, which characterizes amenability of a locally compact group $G$ through a net $(u_i)$ of normalized positive definite functions on $G$ whose multipliers converge to the identity of $VN(G)$ in the point weak* topology.

For $C^*$-dynamical systems, we complement the recent work of Buss, Echterhoff and Willett \cite{BEW} by showing the equivalence between their notion of amenability, amenability of the universal enveloping $W^*$-system, and a particular case of the 1-positive weak approximation property of B\'{e}dos and Conti \cite{BC3} (suitably generalized to the locally compact setting). We also obtain an analogous Herz-Schur multiplier characterization at the level of the reduced crossed product. Our results in this context are summarized by:

\begin{thm}\label{t:intro} Let $(A,G,\alpha)$ be a $C^*$-dynamical system. The following conditions are equivalent:
\begin{enumerate}
\item $(A,G,\alpha)$ is amenable in the sense of \cite{BEW};
\item there exists a net $(\xi_i)$ in $C_c(G,\ell^2(A))$ such that
\begin{enumerate}
\item $\la\xi_i,\xi_i\ra\leq $ for all $i$;
\item $h_{\xi_i}(e)\rightarrow\id_{A}$ in the point norm topology, and
\item $\Theta(h_{\xi_i})\rightarrow\id_{G\ltimes A}$ in the point norm topology,
\end{enumerate}
where $h_{\xi_i}(s)(a)=\la\xi_i,(1\ten 1\ten a)(\lm_s\ten1\ten \alpha_s)\xi_i\ra$ are the associated completely positive Herz-Schur multipliers in the sense of \cite{MTT}, and $\Theta(h_{\xi_i})$ are the induced mappings on $G\ltimes A$;
\item there exists a net $(\xi_i)$ in $C_c(G,\ell^2(A))$ such that $\la\xi_i,\xi_i\ra\leq1 $ for all $i$, and 
$$\norm{h_{\xi_i}(f(s))-f(s)}\rightarrow0, \ \ \ f\in C_c(G,A),$$
uniformly for $s$ in compact subsets of $G$;
\item the universal $W^*$-dynamical system $(A_\alpha'',G,\overline{\alpha})$ (from \cite{I}) is amenable.
\end{enumerate}
Moreover, when $Z(A^{**})=Z(A)^{**}$, the net $(\xi_i)$ can be chosen in $C_c(G,Z(A))$, in which case $h_{\xi_i}(s)(a)=a\la\xi_i,(\lm_s\ten\alpha_s)\xi_i\ra$, $s\in G$, $a\in A$.
\end{thm}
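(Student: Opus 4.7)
The plan is to prove the cyclic chain $(4)\Rightarrow(2)\Rightarrow(3)\Rightarrow(1)\Rightarrow(4)$, with $(2)\Leftrightarrow(3)$ as a by-product, using Theorem~\ref{t:introW*} as the main input for $(4)\Rightarrow(2)$ and Ikunishi's universal enveloping $W^*$-system~\cite{I} as the bridge for $(1)\Rightarrow(4)$. The moreover clause will follow by a refinement of the approximation step in $(4)\Rightarrow(2)$ under the centrality hypothesis $Z(A^{**})=Z(A)^{**}$.

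For $(4)\Rightarrow(2)$: apply Theorem~\ref{t:introW*} to $(A_\alpha'',G,\overline{\alpha})$ to obtain a net $(\eta_j)$ in $C_c(G,Z(A_\alpha'')_c)$ with $\langle\eta_j,\eta_j\rangle=1$ and $\Theta(h_{\eta_j})\to\id_{G\bar{\ltimes}A_\alpha''}$ in the point weak* topology. The key technical step is to replace $(\eta_j)$ by a net $(\xi_i)$ in $C_c(G,\ell^2(A))$ whose induced Herz-Schur multipliers converge to $\id_{G\ltimes A}$ in the point norm topology. This is achieved via a Kaplansky-type density argument together with a convex averaging procedure exploiting complete positivity of the multipliers, which upgrades weak* convergence on $G\bar{\ltimes}A_\alpha''$ to norm convergence on the dense subspace spanned by $C_c(G,A)\subset G\ltimes A$; the uniform bound intrinsic to complete positivity then extends this to all of $G\ltimes A$. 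The auxiliary $\ell^2(A)$-valued vectors (as opposed to $Z(A)$-valued) accommodate the lack of control of $Z(A_\alpha'')_c$ in terms of $Z(A)$ in the general case, while still preserving complete positivity of the induced multipliers. Condition~(b) follows by specializing~(c) to $s=e$.

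For $(2)\Rightarrow(3)$: given $f\in C_c(G,A)$ with compact support $K$, the identity
\[
\Theta(h_{\xi_i})\Bigl(\int_G f(s)\lambda_s\,ds\Bigr)=\int_G h_{\xi_i}(s)(f(s))\lambda_s\,ds
\]
together with the norm estimate
\[
\Bigl\|\int_G\bigl[h_{\xi_i}(s)(f(s))-f(s)\bigr]\lambda_s\,ds\Bigr\|_{G\ltimes A}\leq\int_K\|h_{\xi_i}(s)(f(s))-f(s)\|_A\,ds,
\]
combined with equicontinuity of the family $\{h_{\xi_i}(s)\}$ in $s\in K$, yields the uniform norm convergence of~(3). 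The converse $(3)\Rightarrow(2)$ follows by approximation in the inductive limit topology on $C_c(G,A)$, while~(2)(b) is~(3) applied at $s=e$ to an appropriate test function. The implication $(3)\Rightarrow(1)$ is then essentially by construction: the completely positive Herz-Schur multipliers $\Theta(h_{\xi_i})\to\id_{G\ltimes A}$ in point norm yield the $1$-positive weak approximation property of~\cite{BC3} for $(A,G,\alpha)$, which we identify with BEW amenability as in~(1).

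Finally, $(1)\Rightarrow(4)$ is the main obstacle. The plan is to extract from BEW amenability a net of positive-type $A$-valued approximating functions on $G$, view them (via the universality of~\cite{I}) as positive-type $A_\alpha''$-valued functions, and verify the Reiter property from Theorem~\ref{t:introW*}(2) at the $W^*$-level, possibly after a central averaging. The hardest technical issue is the \emph{centering}: BEW amenability provides positivity relative to the $A$-module structure, whereas the $W^*$-Reiter condition requires central-valued vectors in $Z(A_\alpha'')_c$; in the general case, bridging this gap requires exploiting the support projection structure and central decomposition of $A_\alpha''$, using an approximation via conditional expectations onto the center. Under the extra hypothesis $Z(A^{**})=Z(A)^{**}$, the center of $A_\alpha''$ already sits inside $Z(A)^{**}$, so Kaplansky density produces the approximants directly in $C_c(G,Z(A))$; the centrality of $\xi_i$ then yields the explicit factored formula $h_{\xi_i}(s)(a)=a\langle\xi_i,(\lambda_s\otimes\alpha_s)\xi_i\rangle$.
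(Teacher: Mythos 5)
Your overall architecture---a cycle through the four conditions, with the $W^*$-level Reiter theorem supplying a net in $C_c(G,Z(A_\alpha'')_c)$ and a Kaplansky-density-plus-convexity step producing the $\ell^2(A)$-valued vectors---matches the paper's, but two of your implications have genuine gaps and you have mislocated the difficulty in a third. The most serious problem is $(2)\Rightarrow(3)$: the estimate $\norm{\int_G [h_{\xi_i}(s)(f(s))-f(s)]\lm_s\,ds}_{G\ltimes A}\leq\int_K\norm{h_{\xi_i}(s)(f(s))-f(s)}\,ds$ bounds the crossed-product norm \emph{by} the pointwise data, so it proves $(3)\Rightarrow(2)$, not the converse. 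Recovering uniform pointwise norm convergence from point-norm convergence of $\Theta(h_{\xi_i})$ on $G\ltimes A$ is the hard direction, and no ``equicontinuity'' argument will do it: already for $A=\bC$ the norm $\norm{\lm(g)}_{C^*_\lm(G)}$ of a compactly supported function can be small while $\sup_s|g(s)|$ is large. The paper's Theorem \ref{t:posdef} handles this by restricting to $f=f_0^*\star f_0$, passing to the standard form implementation, slicing with vector states to produce positive definite scalar functions $w_{i,f,\eta}=\Delta^{1/2}v_{i,f,\eta}$, proving weak* convergence in $B(G)$ \emph{together with} convergence of the $B(G)$-norms (this is where hypothesis (2)(b) is genuinely used as an input, rather than being a consequence of (2)(c)), and then invoking the Granirer--Leinert theorem to upgrade to uniform convergence on compacta, with Lemma \ref{l:dense} and polarization treating general $f$. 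Nothing in your sketch replaces this mechanism.

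Second, your $(3)\Rightarrow(1)$ is asserted rather than proved: the identification of the resulting $1$-positive weak approximation property with amenability in the sense of \cite{BEW} is essentially the content of the theorem, so ``which we identify with BEW amenability'' is circular unless you either explicitly import \cite[Theorem 3.28]{BEW} or construct a $G$-equivariant projection of norm one from $\LI\oten A_\alpha''$ onto $A_\alpha''$ out of the net $(\xi_i)$, as the paper does via the maps $P_{i,a}(F)=\la(1\ten1\ten a^{1/2})\xi_i,(1\ten F)(1\ten1\ten a^{1/2})\xi_i\ra$, a weak* cluster point over the approximate unit, and the monotonicity argument of \cite[Lemma 6.5]{ABF}. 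Finally, you have inverted the difficulty at $(1)\Leftrightarrow(4)$: amenability in the sense of \cite{BEW} is by definition the existence of compactly supported positive type functions valued in $Z(A_\alpha'')$ converging weak* to $1$, so there is no ``centering'' gap between $(1)$ and the $W^*$-Reiter condition; this equivalence is immediate from Theorem \ref{t:Reiter} together with Remark \ref{r:Reiter} (for the normalization $h_i(e)\leq 1$), and the central decompositions and conditional expectations onto the center that you propose are not needed there.
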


The equivalence $(1)\Leftrightarrow(4)$ generalizes the corresponding result for exact locally compact groups \cite[Proposition 3.12]{BEW}. 

As a corollary to Theorem \ref{t:intro}, when $Z(A^{**})=Z(A)^{**}$, amenability of $(A,G,\alpha)$ is equivalent to the 1-positive approximation property of Exel and Ng \cite{EN}. This partially answers the recently posed \cite[Question 8.2]{BEW}. It follows that a commutative $C^*$-dynamical system $(C_0(X),G,\alpha)$ is amenable in the sense of \cite{BEW} if and only if the transformation group $(G,X)$ is topologically amenable (see Corollary \ref{c:SA}). This generalizes \cite[Th\'{e}or\`{e}me 4.9]{AD87} from discrete groups to arbitrary locally compact groups, and answers \cite[Question 8.1]{BEW} in the affirmative. Combining Corollary \ref{c:SA} with the recent result \cite[Theorem 5.16]{BEW} of Buss, Echterhoff and Willett, we obtain a positive answer to the long standing open question whether topological amenability and measurewise amenability coincide for actions $G\acts X$ when $G$ and $X$ are second countable. 


The paper is outlined as follows. We begin in section 2 with preliminaries on dynamical systems and vector-valued integration. Section 3 contains our results on amenable $W^*$-dynamical systems as well as results of independent interest which build on the recent theory of Herz-Schur multipliers for crossed products \cite{BC,BC2,MTT,MSTT}. Section 4 contains our results on amenable $C^*$-dynamical systems. 

\section{Preliminaries}

\subsection{Vector-Valued Integration}
Throughout this subsection $S$ will be a locally compact Hausdorff space with positive Radon measure $\mu$. 

For a Banach space $B$, we let $L^1(S,B)$ denote the space of (locally a.e.\ equivalence classes of) Bochner integrable functions $f: S \to B$ with the norm $\|f\| = \int_S \|f\| \, d\mu(s)$. By the Pettis Measurability Theorem and Bochner's Theorem (see \cite[Section 2.3]{Ryan}), for $f: S \to B$ supported on a $\sigma$-finite set, $f \in L^1(S,B)$ if and only if $f$ is weakly measurable, essentially separably valued, and satisfies $\int_S \|f(s)\| \, d\mu(s) < \infty$. In particular, there is a canonical map $C_c(S,B)\to L^1(S,B)$, where $C_c(S,B)$ denotes the continuous $B$-valued functions of compact support. It is well-known that $L^1(S,B)\cong L^1(S,\mu)\pten B$ isometrically, where $\pten$ is the Banach space projective tensor product (see, e.g., \cite[Proposition IV.7.14]{Takesaki}).

If $M$ is a von Neumann algebra we have the following canonical identifications: \[(L^\infty(S,\mu) \overline{\otimes} M)_* \cong L^1(S,M_*) \cong L^1(S,\mu) \otimes^\pi M_*.\] (See \cite[Proposition IV.7.14 and Theorem IV.7.17]{Takesaki}.) We remark that $L^\infty(S,\mu) \overline{\otimes} M$ does not necessarily coincide with the space $L^\infty(S,M)$ of essentially bounded $w^*$- locally measurable functions from $S$ to $M$ since we do not assume that $M_*$ is separable (see \cite{S,Takemoto}). However, by \cite[Theorem IV.7.17]{Takesaki}, for each $F \in L^\infty(S) \overline{\otimes} M$, there exists a weak*-measurable function $\tilde{F}:S \to M$ such that for every $g \in L^1(S,M_*)$, the function $s \mapsto \langle \tilde{F}(s),g(s) \rangle$ is a measurable function on $S$, and \[ \langle F,g \rangle = \int_S \langle \tilde F(s),g(s) \rangle \, d\mu(s), \qquad  g \in L^1(S,M_*).\] In this case, we will say that $\tilde{F}$ \emph{represents $F$}, and usually abuse notation by omitting the tilde in the latter centered equation. There are some pitfalls that one must take care to avoid though---for example, if $S=[0,1]$ with Lebesgue measure, and $M = \ell^\infty[0,1]$ is the space of all bounded functions on $[0,1]$, then the function $f: S \to M$, $f(t) = \chi_{\{t\}}$, is nonzero everywhere, but $f$ represents $0 \in L^\infty(S) \overline{\otimes} M$.

\begin{lem}\label{l:1}
If $M$ is a von Neumann algebra and $\omega \in M_*$, there is a map $\tilde \omega: L^1(S,M) \to L^1(S,M_*)$ determined by the formula \[ \langle \tilde \omega(g)(s),x \rangle = \langle \omega, g(s)x  \rangle\] for $g \in L^1(S,M)$, $s \in S$, and $x \in M$. Moreover, $\norm{\tilde{\om}}\leq\norm{\om}$.
\end{lem}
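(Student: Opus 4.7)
The plan is to define $\tilde\omega(g)$ by the pointwise formula $\tilde\omega(g)(s)=\omega\cdot g(s)$, where $\omega\cdot m\in M_*$ denotes the functional $x\mapsto\la\omega,mx\ra$. Observe first that the right action $R_\omega:M\to M_*$, $m\mapsto\omega\cdot m$, is linear and bounded with $\norm{R_\omega}\leq\norm{\omega}$, since $|(\omega\cdot m)(x)|=|\omega(mx)|\leq\norm{\omega}\norm{m}\norm{x}$. Uniqueness of $\tilde\omega(g)$ satisfying the displayed formula is then immediate, since the value at almost every $s\in S$ is pinned down by pairing against all $x\in M$.

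To verify well-definedness, I would proceed in two equivalent ways. The direct approach is to check the Pettis measurability criterion for $s\mapsto R_\omega(g(s))$ as a function into $M_*$. Since $(M_*)^*=M$, weak measurability in $M_*$ amounts to measurability of $s\mapsto(\omega\cdot g(s))(x)=\omega(g(s)x)$ for every $x\in M$; this is the composition of the Bochner-measurable $g:S\to M$ with the continuous linear functional $m\mapsto\omega(mx)$ on $M$, hence measurable. Essential separable-valuedness of $\tilde\omega(g)$ is inherited from $g$ because $R_\omega$ is norm continuous. Combined with the pointwise estimate $\norm{\tilde\omega(g)(s)}\leq\norm{\omega}\norm{g(s)}$, Bochner's theorem gives $\tilde\omega(g)\in L^1(S,M_*)$ with
\[
\norm{\tilde\omega(g)}_1=\int_S\norm{\omega\cdot g(s)}\,d\mu(s)\leq\norm{\omega}\int_S\norm{g(s)}\,d\mu(s)=\norm{\omega}\norm{g}_1,
\]
so $\norm{\tilde\omega}\leq\norm{\omega}$.

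A slicker alternative, which I would likely actually write up, uses the identification $L^1(S,B)\cong L^1(S,\mu)\pten B$ recalled in the preliminaries together with functoriality of the projective tensor product: the bounded map $R_\omega:M\to M_*$ induces $\id_{L^1(S,\mu)}\pten R_\omega:L^1(S,\mu)\pten M\to L^1(S,\mu)\pten M_*$ of norm at most $\norm{R_\omega}\leq\norm{\omega}$. Unwinding the identification on elementary tensors $f\otimes m\mapsto f\otimes(\omega\cdot m)$ shows that this operator realizes the pointwise formula $g\mapsto\omega\cdot g(\cdot)$ almost everywhere, and simultaneously supplies the norm estimate. The only mildly subtle point—and the one I would flag as the main obstacle—is passing from the projective-tensor description back to the pointwise defining formula, which is handled by testing against $f\otimes x\in L^1(S,\mu)\pten M$ and using the Fubini-type evaluation $\la g,f\otimes x\ra=\int_S f(s)\la g(s),x\ra\,d\mu(s)$ built into the identification $L^1(S,M)\cong L^1(S,\mu)\pten M$.
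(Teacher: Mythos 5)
Your proposal is correct, and the ``slicker alternative'' you say you would actually write up is precisely the paper's proof: the authors define $\tilde\omega$ as $\mathrm{id}\otimes\omega_0$ on $L^1(S,\mu)\otimes^\pi M$, where $\omega_0$ is your $R_\omega$, and note the norm bound is immediate. Your direct Pettis--Bochner verification is a sound (if more laborious) substitute for the same fact.
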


\proof
Using the canonical identifications, the map $\tilde \omega$ is just $\mathrm{id} \otimes \omega_0: L^1(S) \otimes^{\pi} M \to L^1(S) \otimes^{\pi} M_*$, where $\omega_0: M \to M_*$ is the operator satisfying $\langle \omega_0(y), x \rangle = \langle \omega, yx \rangle$ for $x,y \in M$. The norm inequality is obvious.
\endproof

If $A$ is a $C^*$-algebra, we let $L^2(S,A)$ denote the Hilbert module completion of $C_c(S,A)$ under the $A$-valued inner product
$$\la\xi,\eta\ra=\int_S\xi(s)^*\eta(s) \ d\mu(s), \ \ \ \xi,\eta\in C_c(S,A).$$

\subsection{Dynamical Systems} \label{subsection: ds}
A $W^*$-dynamical system $(M,G,\alpha)$ consists of a von Neumann algebra $M$ endowed with a homomorphism $\alpha:G\rightarrow\mathrm{Aut}(M)$ of a locally compact group $G$ such that for each $x\in M$, the map $G\ni s\rightarrow \alpha_s(x)\in M$ is weak* continuous. In this case, the canonical action $G \acts M_*$ is norm-continuous (see \cite[Proposition 1.2']{Takesaki2}). We let $M_c$ denote the unital $C^*$-subalgebra consisting of those $x\in M$ for which $s\mapsto \alpha_s(x)$ is norm continuous. By \cite[Lemma 7.5.1]{Ped}, $M_c$ is weak* dense in $M$.

The action $\alpha$ induces a normal injective unital $*$-homomorphism 
$$\alpha:M\ni x\rightarrow(s\mapsto \alpha_{s^{-1}}(x))\in\LI\oten M$$
defined by \[\la \alpha(x),F \ra = \int_G \la \alpha_{s^{-1}}(x),F(s) \ra \, ds, \quad \text{for } F \in L^1(G,M_*).\]
A normal covariant representation $(\pi, u)$ of $( M,G,\alpha)$ consists of a normal representation $\pi: M\rightarrow\BH$ and a unitary representation $u:G\rightarrow\BH$ such that $\pi(\alpha_s(x))=u_s\pi(x)u_{s^{-1}}$ for all $x \in M$, $s\in G$.
When $(\pi,u)$ is a normal covariant representation of $(M,G,\alpha)$ (this includes the case when $M\subseteq\BH$ is standardly represented, since in this case there exists a unique strongly continuous unitary representation $u:G\rightarrow \BH$ such that $\alpha_s(x) = u_s x u_{s^{-1}}$ by \cite[Corollary 3.6]{H}), there is corresponding generator $U\in\LI\oten\BH$, defined
\[ \la U,F \ra = \int_G \la u_s,F(s) \ra, \quad \text{for } F \in L^1(G,M_*),\] and we have $\alpha(x)=U^*(1\ten x)U$, $x\in M$. Moreover, for any $\xi\in L^2(G,H)$,
\begin{align*}U(\lm_s\ten 1)\xi(t)&=u_t((\lm_s\ten 1)\xi(t))=u_t(\xi(s^{-1}t))\\
&=u_su_{s^{-1}t}(\xi(s^{-1}t))\\
&=u_s(U\xi(s^{-1}t))\\
&=(\lm_s\ten u_s)U\xi(t).
\end{align*}
Hence, $U(\lm_s\ten 1)=(\lm_s\ten u_s)U$ for any $s\in G$.

A $C^*$-dynamical system $(A,G,\alpha)$ consists of a $C^*$-algebra endowed with a homomorphism $\alpha:G\rightarrow\mathrm{Aut}( A)$ of a locally compact group $G$ such that for each $a\in A$, the map $G\ni s\mapsto\alpha_s(a)\in A$ is norm continuous. 

A covariant representation $(\pi, \sigma)$ of $( A,G,\alpha)$ consists of a representation $\pi: A\rightarrow\BH$ and a unitary representation $\sigma:G\rightarrow\BH$ such that $\pi(\alpha_s(a))=\sigma_s\pi(a)\sigma_{s^{-1}}$ for all $a \in A$, $s\in G$. Given a covariant representation $(\pi,\sigma)$, we let
$$(\pi \times \sigma)(f) = \int_G \pi(f(t)) \sigma_t \, dt, \ \ \ f\in C_c(G, A).$$
The full crossed product $G\ltimes_f A$ is the completion of $C_c(G, A)$ in the norm
$$\|f \| = \sup_{(\pi, \sigma)} \| (\pi \times \sigma)(f)\|$$
where the $\sup$ is taken over  all covariant representations $(\pi, \sigma)$ of $(A,G,\alpha)$.

Let $A\subseteq\mc{B}(H)$ be a faithful non-degenerate representation of $ A$. Then $(\alpha,\lm\ten 1)$ is a covariant representation on $L^2(G,H)$, where 
$$\alpha(a)\xi(t)=\alpha_{t^{-1}}(a)\xi(t), \ \ \ (\lm\ten 1)(s)\xi(t)=\xi(s^{-1}t), \ \ \ \xi\in L^2(G,H).$$
The reduced crossed product $G\ltimes A$ is defined to be the norm closure of $(\alpha\times(\lm\ten 1))(C_c(G,A))$. This definition is independent of the faithful non-degenerate representation $A\subseteq\mc{B}(H)$. We often abbreviate $\alpha\times(\lm\ten 1)$ as $\alpha\times\lm$. Recall that $C_c(G,A)$ is a $*$-algebra under the operations
$$f\star g(s)=\int_G f(t)\alpha_t(g(t^{-1}s) \ dt, \ \ \ f^*(s)=\Delta(s^{-1})\alpha_s(f(s^{-1})^*), \ \ \ f,g\in C_c(G,A),$$
and that $\alpha\times\lm$ is a $*$-homomorphism. 

Analogous to the group setting, dual spaces of crossed products can be identified with certain $ A^*$-valued functions on $G$. We review aspects of this theory below and refer the reader to \cite[Chapters 7.6, 7.7]{Ped} for details. 

For each $C^*$-dynamical system $( A,G,\alpha)$ there is a universal covariant representation $(\pi,\sigma)$ such that 
$$G\ltimes_f A\subseteq C^*(\pi( A)\cup \sigma(G))\subseteq M(G\ltimes_f A).$$
Each functional $\vphi\in (G\ltimes_f A)^*$ then defines a function $\Phi:G\rightarrow A^*$ by
\begin{equation} \label{eqn: FS relation} \la\Phi(s),a\ra=\vphi(\pi(a)\sigma_s), \ \ \ a\in A, \ s\in G.\end{equation}
Let $B(G\ltimes_f A)$ denote the resulting space of $ A^*$-valued functions on $G$. An element $\Phi\in B(G\ltimes_f A)$ is \textit{positive definite} if it arises from a positive linear functional $\vphi$ as above. We let $A(G\ltimes_f A)$ denote the subspace of $B(G\ltimes_f A)$ whose associated functionals $\vphi$ are of the form
$$\vphi(x)=\sum_{n=1}^\infty\la\xi_n, \alpha\times\lm(x)\eta_n\ra, \ \ \ x\in G\ltimes_f  A,$$
for sequences $(\xi_n)$ and $(\eta_n)$ in $L^2(G,H)$ with $\sum_{n=1}^\infty \norm{\xi_n}^2<\infty$ and $\sum_{n=1}^\infty\norm{\eta_n}^2<\infty$. Then $A(G\ltimes_f A)$ is a norm closed subspace of $(G\ltimes_f A)^*$ which can be identified with $((G\ltimes A)'')_*$.

A function $h:G\rightarrow A$ is of positive type (with respect to $\alpha$) if for every $n\in\mathbb{N}$, and $s_1,...,s_n\in G$, we have
$$[\alpha_{s_i}(h(s_{i}^{-1}s_j)]\in M_n(A)^+.$$
We let $P_1(A,G,\alpha)$ denote the convex set of positive type functions with $\norm{h_i(e)}\leq 1$.


Every $C^*$-dynamical system $(A,G,\alpha)$ admits a unique universal $W^*$-dynamical system $(A_\alpha'',G,\overline{\alpha})$ \cite{I}. We review this construction taking an $\LO$-module perspective. In \cite{BEW}, they study $(A_\alpha'',G,\overline{\alpha})$ from a different, equivalent perspective.

First, $A$ becomes a right operator $\LO$-module in the canonical fashion by slicing the corresponding non-degenerate representation
$$\alpha:A\ni a \mapsto (s\mapsto\alpha_{s^{-1}}(a))\in C_b(G,A)\subseteq\LI\oten A^{**}.$$
Explicitly, this action is given by
\[ a \ast f = \int_G  f(s)\alpha_{s^{-1}}(a) \, ds\]
for $a \in A, \ f \in L^1(G)$.
By duality we obtain a left operator $\LO$-module structure on $A^*$ via
$$\alpha^*|_{\LO\opten A^*}:\LO\opten A^*\rightarrow A^*.$$
Then $G$ acts in a norm-continuous fashion on the essential submodule 
$$A^*_c:=\la\LO\ast A^*\ra,$$
where $\la\cdot\ra$ denotes closed linear span. The same argument in \cite[Lemma 7.5.1]{Ped} shows that $A^*_c$ coincides with the norm-continuous part of $A^*$ (i.e., the set of $\varphi \in A^*$ such that each map $G \to A^*$, $s \mapsto \varphi \circ \alpha_s$ is norm-continuous), hence the notation. This fact was also noted by Hamana in \cite[Proposition 3.4(i)]{Ham11}. We therefore obtain a point-weak* continuous action of $G$ on the dual space $(A^*_{c})^*$ by surjective complete isometries. Clearly 
\begin{equation}\label{e:iden}(A^*_{c})^*\cong A^{**}/(A^*_{c})^{\perp}\end{equation}
completely isometrically and weak*-weak* homeomorphically as right $\LO$-modules, where the canonical $\LO$-module structure on $A^{**}$ is obtained by slicing the normal cover of $\alpha$, which is the normal $*$-homomorphism
$$\widetilde{\alpha}=(\alpha^*|_{\LO\opten A^*})^*:A^{**}\rightarrow\LI\oten A^{**}.$$
Note that $\widetilde{\alpha}|_{M(A))}$ is the unique strict extension of $\alpha$, and is therefore injective \cite[Proposition 2.1]{Lance}. However, on $A^{**}$, $\widetilde{\alpha}$ can have a large kernel. On the one hand, its kernel is of the form $(1-z)A^{**}$ for some projection $z\in Z(A^{**})$. On the other hand, by definition of the $\LO$-action on $A^{**}$, $\mathrm{Ker}(\widetilde{\alpha})=(A^*_{c})^{\perp}$. It follows that $(A^*_{c})^*$ is completely isometrically weak*-weak* order isomorphic to $zA^{**}$, where we equip $(A^*_{c})^*$ with the quotient operator system structure from $A^{**}$. We can therefore transport the point-weak* continuous $G$-action on $(A^*_{c})^*$ to $A_\alpha'':=zA^{**}$, yielding a $W^*$-dynamical system $(A_\alpha'',G,\overline{\alpha})$, where $\overline{\alpha}:G\rightarrow\mathrm{Aut}(A_\alpha'')$ is given by
$$\overline{\alpha}_t(zx)=z((\alpha_t)^{**}(x)), \ \ \ x\in A^{**}, \  t\in G.$$
The associated normal injective $*$-homomorphism 
$$\overline{\alpha}:A_\alpha''\rightarrow\LI\oten A_\alpha''$$
is $(\id\ten\mathrm{Ad}(z))\circ\widetilde{\alpha}|_{A_\alpha''}$. Hence, the $\LO$-action on $A_\alpha''$ satisfies
$$(zx)\ast f =(f\ten\id)\overline{\alpha}(x)=\Ad(z)((f\ten\id)\widetilde{\alpha}(x))=z(x\ast f),$$
for $f\in\LO$, and $x\in A^{**}$. We emphasize that with this structure $A_\alpha''$ is not necessarily an $\LO$-submodule of $A^{**}$, rather $\mathrm{Ad}(z):A^{**}\rightarrow A_\alpha''$ is an $\LO$-complete quotient map. 

Finally, as $\widetilde{\alpha}|_{M(A)}$ is an injective $*$-homomorphism, for all $x\in M(A)$ we have
$$\norm{x}=\norm{\widetilde{\alpha}(x)}=\norm{\widetilde{\alpha}(zx)}=\norm{zx}.$$ 
It follows that $\Ad(z):M(A)\hookrightarrow A_\alpha''$ is a $G$-equivariant isometry.

\section{Amenable $W^*$-dynamical systems}

A $W^*$-dynamical system $(M,G,\alpha)$ is \textit{amenable} \cite{ADI} if there exists a projection of norm one $P:\LI\oten M\rightarrow M\cong 1\ten M$ such that $P\circ(\lambda_s\ten \alpha_s)=\alpha_s\circ P$, $s\in G$, where $\lambda$ denotes the left translation action on $\LI$. For example, $(\LI,G,\lm)$ is always amenable, and $G$ is amenable if and only if the trivial action $G\acts\{x_0\}$ is amenable, in which case $P$ becomes a left invariant mean on $\LI$. In this section we first establish a Reiter property for amenability, and then apply this result to obtain the Herz-Schur multiplier characterization from Theorem \ref{t:introW*}.

\subsection{A Reiter Property}

In this subection we establish a Reiter property for amenable $W^*$-dynamical systems, generalizing \cite[Th\'{e}or\`{e}me 3.3]{AD87} from discrete groups to arbitrary locally compact groups. We require several preparations. The first is a continuous version of \cite[Lemme 3.1]{AD87}.

Given a locally compact Hausdorff space $S$ with positive Radon measure $\mu$, and a von Neumann algebra $M$, we let 
$$K_1^+(S,Z(M)_c)=\bigg\{ g\in C_c(S,Z(M)_c^+)\mid \int_S g(s) \ d\mu(s)=1\bigg\},$$
where $C_c(S,Z(M)_c^+)$ is the space of norm continuous $Z(M)_c^+$-valued functions on $S$ with compact support. 
Let $\mc{B}_M(L^\infty(S) \overline{\otimes} M,M)$ denote the Banach space of bounded $M$-bimodule maps from $L^\infty(S) \overline{\otimes} M$ to $M$, and let $\P$ denote the convex subset of $\mc{B}_M(L^\infty(S) \overline{\otimes} M,M)$ given by the unital positive $M$-bimodule maps. Every map $P\in \mc{P}$ is automatically completely positive, so that $\norm{P}=\norm{P(1)}=1$.

Each $g\in K_1^+(S,Z(M)_c)$ gives rise to an element $P_g\in\P$ by means of the formula
$$\langle P_g(F), \omega \rangle=\int_S \langle F(s)g(s), \omega \rangle \ d\mu(s), \ \ \ F\in L^\infty(S) \overline{\otimes} M, \ \omega \in M_*.$$
The latter expression makes sense irrespective of the choice of representative of $F$ since it is equal to $\langle \tilde \omega(g),F \rangle$, viewing $g \in L^1(S,M)$. We will usually shorten the previously displayed formula by writing $P_g(F) = \int_S F(s)g(s) \, d\mu(s)$ for $F\in L^\infty(S) \overline{\otimes} M$.

Let $\P_K:=\{P_g\mid g\in K_1^+(S,Z(M)_c)\}\subseteq\P$. 

\begin{lem}\label{l:AD} Let $S$ be a locally compact Hausdorff space with positive Radon measure $\mu$ and let $M$ be a commutative von Neumann algebra. Then $\P_K$ is dense in $\P$ in the point-weak* topology of $\mc{B}(L^\infty(S) \overline{\otimes} M,M)$. \end{lem}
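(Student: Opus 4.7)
My plan is a Hahn--Banach separation argument. Since $\lambda P_g + (1-\lambda)P_h = P_{\lambda g + (1-\lambda)h}$ and $K_1^+(S,Z(M)_c)$ is itself convex, the set $\mathcal{P}_K$ is convex, as is its point-weak* closure. If some $P \in \mathcal{P}$ lay outside this closure, Hahn--Banach would produce a point-weak* continuous real-linear functional $\Phi$ on $\mathcal{B}(L^\infty(S)\overline{\otimes}M, M)$ and $\alpha \in \mathbb{R}$ with $\mathrm{Re}\,\Phi(P) > \alpha \geq \mathrm{Re}\,\Phi(Q)$ for every $Q \in \mathcal{P}_K$. Every pw*-continuous functional is a finite sum $T \mapsto \sum_{j=1}^n\langle T(F_j),\omega_j\rangle$, and I would first reduce this to a single pairing: dominating all the $\omega_j$ by the common positive element $\omega := \sum_j |\omega_j| \in M_*^+$, Radon--Nikodym in the commutative algebra $M$ yields $m_j \in M$ with $\omega_j = m_j\omega$, and the $M$-bimodule property of every $T \in \mathcal{P}$ gives $\langle T(F_j),m_j\omega\rangle = \langle m_j T(F_j),\omega\rangle = \langle T((1\otimes m_j)F_j),\omega\rangle$, yielding $\Phi(T) = \langle T(F),\omega\rangle$ for $F := \sum_j(1\otimes m_j)F_j$ and $\omega \in M_*^+$.

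The contradiction will come from realizing that the supremum of $\mathrm{Re}\,\Phi$ on $\mathcal{P}_K$ is at least $\mathrm{Re}\,\omega(P(F))$. Using Lemma \ref{l:1} I form $\eta_F \in L^1(S, M_*)$ by $\eta_F(s)(m) = \omega(F(s)m)$, so that for $g \in K_1^+(S,Z(M)_c)$,
\[ \langle P_g(F),\omega\rangle \;=\; \int_S \omega(F(s)g(s))\,d\mu(s) \;=\; \int_S \langle g(s),\eta_F(s)\rangle\,d\mu(s). \]
I would then exploit commutativity of $M$ by writing $M = L^\infty(X,\nu)$ (Gelfand), so that $F$ is represented by a bounded measurable function on $S\times X$, $\omega$ by an element of $L^1(X,\nu)$, and the integrand becomes $\int_X \omega(x)\int_S F(s,x)g(s,x)\,d\mu(s)\,d\nu(x)$. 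The constraint $\int_S g(s)\,d\mu(s) = 1 \in M$ translates, for each $x$, to $g(\cdot,x)$ being a probability density on $S$, so the inner integral can be driven toward $\mathrm{ess\,sup}_s \mathrm{Re}\,F(s,x)$ by concentrating $g(\cdot,x)$ near fiberwise near-maximizers. The analogous disintegration-style bound $\mathrm{Re}\,\omega(P(F)) \leq \int_X \omega(x)\,\mathrm{ess\,sup}_s \mathrm{Re}\,F(s,x)\,d\nu(x)$ for the given $P \in \mathcal{P}$ matches the upper envelope from the right-hand side, yielding the desired contradiction.

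The main obstacle is producing a \emph{single} $g \in K_1^+(S,Z(M)_c)$ whose fiber-slices simultaneously approximate the fiberwise near-maximizers while satisfying the exact normalization $\int_S g\,d\mu = 1 \in M$ and lying in $C_c(S,Z(M)_c^+)$ (not $C_c(S,M^+)$). My plan is to choose a compact $K \subseteq S$ capturing essentially all of the mass of $\eta_F$, take a finite open cover of $K$ sufficiently fine that $F(\cdot,x)$ is nearly constant on each piece in $s$, and use a subordinate continuous partition of unity $\{\psi_k\}_{k=1}^n \subseteq C_c(S)^+$. Setting $g(s) = \sum_k \psi_k(s)\, m_k$ for coefficients $m_k \in Z(M)_c^+$ selected via the weak*-density of $Z(M)_c$ in $Z(M) = M$ (so that the fiber-masses $m_k(x)$ place the right amount of weight), and making a small renormalizing correction using $1 \in Z(M)_c$ to force $\sum_k m_k \int_S\psi_k\,d\mu = 1$, should yield the required $g$. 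The delicate balancing between fiberwise-maximality, the exact global normalization in $M$, and the $Z(M)_c$-valued continuity constraint is the technical heart of the argument.
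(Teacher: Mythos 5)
Your opening moves are sound and agree with the paper's: $\P_K$ is convex, so density reduces via Hahn--Banach (the paper phrases it as a bipolar argument) to a separation statement, and the reduction of a generic point-weak* continuous functional $\sum_j\la T(F_j),\om_j\ra$ to a single pairing $\la T(F),\om\ra$ with $\om\in M_*^+$, via Radon--Nikodym and the $M$-bimodule property, is exactly the first step of the paper's proof (borrowed from Lemme 3.1 of Anantharaman-Delaroche). The gap is in the second half, where you replace the paper's key device by a Gelfand disintegration $M=L^\infty(X,\nu)$ and a fiberwise essential supremum. This fails in the generality of the lemma for two reasons. First, for a general commutative von Neumann algebra $M$ there is no canonical jointly measurable representative of $F\in L^\infty(S)\oten M$ on $S\times X$, and the function $x\mapsto\operatorname{ess\,sup}_s\mathrm{Re}\,F(s,x)$ is neither well defined independently of the representative nor measurable in general. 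The paper's own cautionary example is decisive here: with $S=[0,1]$ and $M=\ell^\infty[0,1]$ (so $\nu$ is counting measure, not $\sigma$-finite), the function $f(t)=\chi_{\{t\}}$ represents $0$, yet its naive fiberwise supremum is $1$. Your upper bound $\om(P(F))\leq\int_X\om(x)\operatorname{ess\,sup}_s\mathrm{Re}\,F(s,x)\,d\nu(x)$ therefore has no meaning, or the wrong value, unless the representative is chosen with care. This is precisely what the paper's lifting $\rho:L^\infty(S,\mu)\to\ell^\infty(S,\mu)$ is for: it produces a representative $(H_0)_\rho$ whose values lie in the weak*-closed, finite-sup-stable set $C$ of (limits of) achievable averages $\int H_0 g\,d\mu$, and the dominating element is then defined as a \emph{lattice} supremum $m=\sup_s(H_0)_\rho(s)^+$ in $M$, with $\vphi(m)\leq1$ following from normality of $\vphi$ and $\vphi\leq1$ on $C$ --- no fiberwise analysis ever occurs.

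Second, and independently, your lower bound requires producing a single $g\in K_1^+(S,Z(M)_c)$ whose fiber slices simultaneously concentrate near the fiberwise near-maximizers of $F(\cdot,x)$, with exact normalization $\int_S g\,d\mu=1$ and values in the norm-continuous part $Z(M)_c$. The set of near-maximizing $s$ depends on $x$, so choosing the coefficients $m_k$ in your partition-of-unity ansatz is a measurable selection problem that you have not addressed and that is genuinely problematic outside the standard Borel setting; you flag it as ``the technical heart'' but the sketch given does not resolve it. Note that the paper's argument is structured so that this achievability step is never needed: it only uses that $\vphi\leq1$ on the sup-closed set $C$, not that the supremum $m$ is attained or approximated by elements of $\P_K$. (Your route does essentially recover the known special case $M=L^\infty(X,\nu)$ with $(X,\nu)$ and $(S,\mu)$ $\sigma$-finite, which the paper notes follows from \cite{ADR}; but the lemma is needed, and stated, for arbitrary commutative $M$.)
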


\begin{proof} The majority of the proof follows that of \cite[Lemme 3.1]{AD87}, but we include some details for the convenience of the reader. First, 
$$\mc{B}_M(L^\infty(S) \overline{\otimes} M,M)=((L^\infty(S) \overline{\otimes} M\ten^\pi_M M_*)^*,$$
where $\ten^\pi_M$ is the $M$-bimodule Banach space projective tensor product. By definition of the projective tensor norm together with the Radon--Nikodym theorem, every element in $(L^\infty(S) \overline{\otimes} M)\ten^\pi_M M_*$ is the equivalence class of an element of the form $F\ten \vphi$ with $F\in L^\infty(S) \overline{\otimes} M$ and $\vphi\in M_*^+$, as shown in \cite[Lemme 3.1]{AD87}.  By convexity it suffices to show that $\P$ is contained in the bipolar of $\P_K$. Let $F_0\in L^\infty(S) \overline{\otimes} M$ and $\vphi\in M_*^+$ be such that 
$$\mathrm{Re}\la P_g, F_0\ten \vphi\ra=\mathrm{Re} \ \vphi\bigg(\int_S F_0(s)g(s) \ d\mu(s)\bigg)\leq 1, \ \ \ g\in K_1^+(S,M_c).$$
If $H_0=\mathrm{Re}(F_0)$, then 
$$\vphi\bigg(\int_S H_0(s)g(s) \ d\mu(s)\bigg)\leq 1,\ \ \ g\in K_1^+(S,M_c).$$
Let $C$ denote the weak*-closure of $\{\int_S H_0(s)g(s) \, d\mu(s) \mid g\in K_1^+(S,M_c)\}$ in $M$. Given $x_1,x_2\in C$ and a projection $e\in M$, we have $x_1e+x_2(1-e)\in C$. Indeed, pick nets $(g_i),(f_j)$ in $K_1^+(S,M_c)$ such that 
$$x_1=w^*\lim_i\int_S H_0(s)g_i(s) \ d\mu(s), \ \ \ x_2=w^*\lim_j\int_S H_0(s)f_j(s) \ d\mu(s).$$
Without loss of generality, we can assume the nets $(g_i)$ and $(f_j)$ have the same index set. Since $M_c$ is weakly dense in $M$, by Kaplansky's density theorem, pick a net $(p_k)$ of positive operators in the unit ball of $M_c$ such that $p_k\rightarrow e$ strongly (and hence weak*, by boundedness). Then
$$x_1e+x_2(1-e)=w^*\lim_k \ w^*\lim_i\int_S H_0(s)(g_i(s)p_k+f_i(s)(1-p_k)) \ d\mu(s).$$
Since $g_i(1\ten p_k)+f_i(1\ten(1-p_k))\in K_1^+(S,M_c)$, combining the iterated limit into a single net, we see that $x_1e+x_2(1-e)\in C$. Then $C$ is closed under finite suprema using the Stonian structure of the spectrum of $M$, as in \cite[Lemme 3.1]{AD87}.

Now, fix a $*$-monomorphism $\rho: L^\infty(S,\mu)\rightarrow\ell^\infty(S,\mu)$, satisfying $q\circ\rho=\id_{L^\infty(S,\mu)}$, where $\ell^\infty(S,\mu)$ is the $C^*$-algebra of bounded $\mu$-measurable functions on $S$, and $q:\ell^\infty(S,\mu)\rightarrow L^\infty(S,\mu)$ is the canonical quotient map. Such a lifting exists by \cite[Corollary 2]{II}. Fix $s\in S$. Then $e_s:=\mathrm{ev}_s\circ\rho\in L^\infty(S,\mu)^*$ is a state on $L^\infty(S,\mu)$. Let $(g_i^s)$ be a net of states in $L^1(S,\mu)$ approximating $e_s$ weak*. By a further approximation using norm density of $C_c(S)$ in $L^1(S,\mu)$, we may take each $g_i^s\in C_c(S)^+$ with $\int_S g_i^s(t) \, d\mu(t) =1$.  Viewing $g_i^s \in K_1^+(S,M_c)$ in the canonical way ($M_c$ is unital), for every $F\in L^\infty(S) \overline{\otimes} M$, define a function $F_\rho: S \to M$ by
\[F_\rho(s) = w^*\lim_i \int_S g_i^s(t) F(t) \, d\mu(t) = w^*\lim_i P_{g_i^s}(F).\] To see that this definition makes sense regardless of representative of $F$, note that for $\omega \in M_*$, $\int_S \langle g_i^s(t)F(t), \omega \rangle \, d\mu(t) = \langle g_i^s, (\mathrm{id} \otimes \omega)(F) \rangle$, where $(\mathrm{id} \otimes \omega)(F)$ is the element in $L^\infty(S,\mu)$ defined $\langle (\mathrm{id} \otimes \omega)(F),g \rangle = \langle F, g \otimes \omega \rangle$ for $g \in L^1(S,\mu)$.

We claim that $F_\rho$ represents $F$. Indeed, to check measurability of $\varphi_g: s \mapsto \langle F_\rho(s),g(s) \rangle$ for all $g \in L^1(S,M_*)$, first take $g$ to be a simple tensor in $L^1(S,M_*) = L^1(S) \otimes^\pi M_*$. In this case, $\varphi_g$ is a product of measurable functions, hence measurable. This implies the claim for all simple functions $g \in L^1(S,M_*)$ since these are sums of simple tensors. The claim for general $g$ follows from this since a pointwise a.e.--limit of measurable functions is measurable. The formula $\langle F,g \rangle = \int_S \langle F(s), g(s) \rangle \, d\mu(s)$ for $g \in L^1(S,M_*)$ is then readily checked for simple tensors $g$ and improved to general $g$ using the observation that $F_\rho$ is bounded.

Since $(H_0)_\rho(s) \in C$, we have $(H_0)_\rho(s)^+ = (H_0)_\rho(s) \vee 0 \in C$. Define $m=\sup_{s\in S}(H_0)_\rho(s)^+\in M$. Then by normality of $\vphi$, we have $\vphi(m)\leq 1$. Since $(H_0)_\rho(s) \leq m$ in $M$ for all $s$, it follows that $H_0 \leq 1 \otimes m$ in $L^\infty(S) \overline{\otimes} M$. Indeed, if $g \in L^1(S,M_*)^+$ is a positive normal functional on $L^\infty(S) \overline{\otimes} M$, then
\[ \langle H_0,g \rangle = \int_S \langle (H_0)_\rho(s),g(s) \rangle \, d\mu(s) \leq \int_S \langle m, g(s) \rangle \, d \mu(s) = \langle 1 \otimes m,g \rangle.\]

Thus, for every $P\in\P$ we have
$$P(H_0)\leq P(1\ten m)=m P(1)=m,$$ 
so that  
$$\mathrm{Re}\la P,F_0\ten\vphi\ra=\vphi(P(H_0))\leq\vphi(m)\leq 1.$$
Hence, $P$ belongs to the bipolar of $\P_K$.
\end{proof}

\begin{remark} In the special case where $M=L^\infty(X,\nu)$ and $(X,\nu)$ and $(S,\mu)$ are both $\sigma$-finite, the conclusion of Lemma \ref{l:AD} follows from \cite[Lemma 1.2.6]{ADR}.
\end{remark}

Similar to \cite{AD87}, we consider the following two locally convex topologies on the Bochner space $L^1(S,M)$, where $S$ and $M$ are as in Lemma \ref{l:AD}. The first, denoted $\tau_n$, is generated by the family of semi-norms $\{p_\om\mid\om\in M_*^+\}$, where
$$p_\om(g)=\la \om, \int_S |g(s)| \ d\mu(s)\ra = \int_S \la |g(s)|,\om \ra \ d\mu(s) .$$
This is indeed well-defined since $s \mapsto |g(s)|$ is Bochner integrable whenever $g$ is.

The second, denoted $\tau_F$, is generated by the family of semi-norms
$$\{p_{F,\om}\mid F\in L^\infty(S) \overline{\otimes} M, \ \om\in M_*^+\}, \ \ \ \textnormal{where} \ \ \ p_{F,\om}(g)=\bigg|\int_S\la g(s)F(s),\om\ra \ d\mu(s)\bigg|.$$
To see that this is well-defined, define $\tilde{\om}(g): S \to M_*$ by $\la x, \tilde{\om}(g)(s) \ra = \la g(s)x,\om \ra$ for $x \in M$. Then by Lemma \ref{l:1} $\tilde{\om}(g) \in L^1(S,M_*)$. A routine argument then shows that $s \mapsto \la F(s),\tilde{\om}(g)(s) \ra = \la g(s)F(s),\om \ra$ is measurable, and integrability of this function is easy to check.

Since $p_{F,\om}(g)\leq \norm{F}p_{\om}(g)$, it follows that $\tau_n$ is stronger than $\tau_F$.



\begin{lem}\label{l:top} Let $V$ be a convex subset of $L^1(S,M)$ such that every function in $V$ is supported on a $\sigma$-finite subset. Then $\overline{V}^{\tau_F}=\overline{V}^{\tau_n}$. 
\end{lem}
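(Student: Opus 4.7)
The plan is to reduce to Mazur's theorem. The inclusion $\overline{V}^{\tau_n} \subseteq \overline{V}^{\tau_F}$ is immediate from $\tau_F \subseteq \tau_n$. For the reverse inclusion, fix $g \in \overline{V}^{\tau_F}$ and $\om \in M_*^+$. The central observation is that the map $\widetilde\om : L^1(S,M) \to L^1(S,M_*)$ of Lemma \ref{l:1} satisfies
$$\|\widetilde\om(h)\|_{L^1(S,M_*)} = p_\om(h), \qquad h \in L^1(S,M).$$
Indeed, using commutativity of $M$ to take a pointwise polar decomposition $h(s) = v(s)|h(s)|$ with $v(s) \in M$ a partial isometry, one checks that $\|\widetilde\om(h)(s)\|_{M_*} = \sup_{\|x\|\leq 1}|\la \om, h(s)x\ra| = \la |h(s)|,\om\ra$ (the supremum being attained at $x = v(s)^*$), and integration in $s$ gives the claim.

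Using the canonical duality $L^1(S,M_*)^* = L^\infty(S)\oten M$, for $F$ in this dual with representing function $s \mapsto F(s)$, we have
$$\la F, \widetilde\om(h) \ra = \int_S \la F(s), \widetilde\om(h)(s) \ra \, d\mu(s) = \int_S \la h(s)F(s), \om \ra \, d\mu(s),$$
which is precisely the integral appearing in $p_{F,\om}$. Hence $\tau_F$-convergence of a net $(h_i) \to g$ in $L^1(S,M)$ is transported by $\widetilde\om$ to \emph{weak} convergence in $L^1(S,M_*)$. As $\widetilde\om(V)$ is convex, Mazur's theorem identifies its weak closure with its norm closure; thus $\widetilde\om(g)$ lies in the norm closure of $\widetilde\om(V)$, producing a net $(h_i) \subseteq V$ with $p_\om(h_i - g) = \|\widetilde\om(h_i - g)\|_{L^1(S,M_*)} \to 0$.

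To conclude, a basic $\tau_n$-neighborhood of $g$ is determined by finitely many $\om_1,\dots,\om_n \in M_*^+$ and $\ep > 0$; setting $\om = \sum_i \om_i$ gives $p_{\om_i} \leq p_\om$, so any $h \in V$ with $p_\om(h-g) < \ep$ lies in the neighborhood. Combining this observation with the previous step yields a net in $V$ converging to $g$ in $\tau_n$. The main delicate point is the pointwise identity $\|\widetilde\om(h)(s)\|_{M_*} = \la |h(s)|,\om\ra$, which relies crucially on commutativity of $M$; the $\sigma$-finite support hypothesis serves to guarantee that the attendant measurable objects (Bochner integrability of $\widetilde\om(h)$ and the existence of well-behaved representing functions for elements of $L^\infty(S)\oten M$) behave without pathology, in light of the distinctions flagged in the preliminaries.
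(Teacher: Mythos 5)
Your proof is correct and follows essentially the same route as the paper: transport $\tau_F$-convergence through $\tilde\om$ to weak convergence in $L^1(S,M_*)$, apply Mazur's theorem to the convex set $\tilde\om(V)$, and recover $p_\om$ from $\norm{\tilde\om(\cdot)}_{L^1(S,M_*)}$ via the pointwise polar decomposition (which is where commutativity of $M$ enters, exactly as in the paper). The only cosmetic difference is that you handle finitely many functionals by summing them into a single $\om\in M_*^+$, whereas the paper indexes its Mazur net by finite subsets of $M_*^+$.
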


\begin{proof}Since $\tau_n$ is stronger than $\tau_F$, it suffices to show that $\overline{V}^{\tau_F}\subseteq\overline{V}^{\tau_n}$. Let $(g_i)$ be a net in $V$ converging to zero with respect to $\tau_F$.
Then, by definition of $\tau_F$, $\tilde{\omega}(g_i) \to 0$ weakly in $L^1(S,M_*)$ for all $\omega \in M_*^+$. By Mazur's theorem, there exists a net $(g_{K,\ep})$ in $V$ indexed by finite subsets $K$ of $M_*^+$ and $\ep>0$ such that 
$$\norm{\tilde{\omega}(g_{K,\ep})}_{L^1(S,M_*)}<\ep, \ \ \ \om\in K.$$
For (an a.e.-representative of) $g \in L^1(S,M)$ and $s \in S$, let $g(s) = u_s|g(s)|$ be the polar decomposition in $M$. Then, since
\[ \langle \omega, |g(s)| \rangle = | \langle \tilde{\omega}(g)(s), u_s^* \rangle | \leq \sup\{|\langle \tilde{\omega}(g)(s),x \rangle| : x \in M_{\norm{\cdot}\leq 1}\} = \|\tilde \om (g)(s)\|_{M_*} \]
for all $\omega \in M_*^+$ and $s \in S$, we have
\[p_\omega(g) = \int_S \langle \omega,u_s^*g(s) \rangle \, d\mu(s) \leq \int_S \|\tilde{\omega}(g)(s)\|_{M_*} \, d\mu(s) =  \norm{\tilde{\omega}(g)}_{L^1(S,M_*)}\] for all $g \in L^1(S,M)$. It follows that $g_{K,\ep} \to 0$ with respect to $\tau_n$.
\end{proof}

The next lemma will be used to upgrade pointwise asymptotic $G$-invariance in Reiter's property to uniform asymptotic $G$-invariance on compacta. This is a generalization of the equivalence of the classical finite and compact Reiter's properties. Our proof generally follows that of \cite[Proposition 6.10]{Pier}.

First, we record a useful, simple lemma, the proof of which is omitted.

\begin{lem} \label{l:bdd w*=ucc}
	Suppose $X$ is a Banach space and $(\varphi_t)$ is a bounded net in $X^*$. Then $\varphi_t \to \varphi$ weak* in $X^*$ if and only if $\varphi_t(x) \to \varphi(x)$ uniformly on compact subsets of $X$.
\end{lem}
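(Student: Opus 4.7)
The converse direction is immediate: singletons are compact, so uniform convergence on compact subsets of $X$ trivially implies pointwise convergence, which is precisely weak* convergence in $X^*$.

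For the forward direction, the plan is a standard three-$\varepsilon$ argument exploiting the total boundedness of the compact set. Let $M$ be a uniform bound for $\norm{\varphi_t}$; passing to the weak* limit gives $\norm{\varphi} \leq M$ as well, since $|\varphi(x)| = \lim_t |\varphi_t(x)| \leq M\norm{x}$ for each $x \in X$. Fix a compact set $K \subseteq X$ and $\varepsilon > 0$. By total boundedness, choose $x_1, \ldots, x_n \in K$ so that the balls of radius $\varepsilon/(3M+1)$ around the $x_i$ cover $K$. By weak* convergence applied at each of the finitely many points $x_1,\ldots,x_n$, pick an index $t_0$ beyond which $|\varphi_t(x_i) - \varphi(x_i)| < \varepsilon/3$ for every $i = 1,\ldots,n$.

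Now for any $x \in K$ and any $t \geq t_0$, choose $x_i$ with $\norm{x - x_i} < \varepsilon/(3M+1)$ and estimate
\[
|\varphi_t(x) - \varphi(x)| \leq |\varphi_t(x) - \varphi_t(x_i)| + |\varphi_t(x_i) - \varphi(x_i)| + |\varphi(x_i) - \varphi(x)| < \varepsilon,
\]
using the norm bounds on $\varphi_t$ and $\varphi$ for the outer two terms. Since $t_0$ depends only on $K$ and $\varepsilon$, this yields uniform convergence on $K$. No real obstacle is expected; the only small care needed is ensuring the limit functional inherits the uniform bound, which is automatic from pointwise weak* convergence.
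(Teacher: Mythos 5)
Your proof is correct; the paper explicitly omits the proof of this lemma as "simple," and your $\varepsilon/3$ argument via total boundedness of $K$ and the uniform bound $M$ on the net (which is exactly where the boundedness hypothesis is needed) is the standard and expected argument.
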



\begin{lem}\label{l:topamenw*} Let $(M,G,\alpha)$ be a commutative $W^*$-dynamical system. The following conditions are equivalent:
\begin{enumerate}
\item There exists a net $(g_i)$ in $K_1^+(G,M_c)$ such that
$$w^*\lim_i\int_G|g_i(s)-(\lm_t\ten\alpha_t)(g_i)(s)| \ ds=0, \ \ \ \text{for all } t\in G.$$
\item There exists a net $(g_i)$ in $K_1^+(G,M_c)$ such that 
$$w^*\lim_i\int_G|g_i(s)-(\lm_t\ten\alpha_t)(g_i)(s)| \ ds=0, \ \ \ 
\text{uniformly on compact subsets of }G.$$
\end{enumerate}
\end{lem}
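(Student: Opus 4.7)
The implication (2)$\Rightarrow$(1) is immediate, so the work is in (1)$\Rightarrow$(2). My strategy is a smoothing argument: given the net $(g_i)$ from (1), I build a new net $(\widetilde{g}_i)$ on which an equicontinuity estimate in the translation parameter holds with modulus independent of $i$, while pointwise-in-$t$ convergence is inherited from (1); Arzel\`a--Ascoli then upgrades this to uniform-on-compacta convergence, which is (2). Concretely, fix $\phi \in C_c(G)^+$ with $\int_G \phi = 1$ and set
\[\widetilde{g}_i = \int_G \phi(r)\,(\lambda_r\otimes\alpha_r)g_i\, dr \;\in\; K_1^+(G,M_c);\]
membership in $K_1^+(G,M_c)$ uses closedness of $M_c$ under the norm-continuous action, compact support of $\phi$ and $g_i$, positivity, and the Fubini-based normalization $\int_G \widetilde{g}_i = \int_G \phi(r)\,\alpha_r(\int_G g_i)\, dr = 1$.

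For equicontinuity, a change of variables $u = sr$ transforms the difference into
\[(\lambda_s\otimes\alpha_s)\widetilde{g}_i - (\lambda_t\otimes\alpha_t)\widetilde{g}_i = \int_G [\phi(s^{-1}u) - \phi(t^{-1}u)]\,(\lambda_u\otimes\alpha_u)g_i\, du.\]
Since $M$ is commutative and $(\lambda_u\otimes\alpha_u)g_i \ge 0$, the triangle inequality applied inside $|\cdot|$ followed by Fubini together with $\int g_i = 1$ and $\alpha_u(1) = 1$ yields
\[p_\omega\bigl((\lambda_s\otimes\alpha_s)\widetilde{g}_i - (\lambda_t\otimes\alpha_t)\widetilde{g}_i\bigr) \;\le\; \omega(1)\,\|\lambda_s\phi - \lambda_t\phi\|_{L^1(G)}.\]
The right-hand side is independent of $i$ and tends to $0$ as $s \to t$ uniformly for $s, t$ in compacta (by $L^1$-continuity of translation on $L^1(G)$), so the family $f_i(t) := p_\omega(\widetilde{g}_i - (\lambda_t\otimes\alpha_t)\widetilde{g}_i)$ is equicontinuous on any compact $K \subseteq G$.

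For pointwise convergence, I use $(\lambda_{tr}\otimes\alpha_{tr}) = (\lambda_r\otimes\alpha_r)(\lambda_{r^{-1}tr}\otimes\alpha_{r^{-1}tr})$ and the isometry $p_\omega \circ (\lambda_r\otimes\alpha_r) = p_{\omega\circ\alpha_r}$ (valid because $\alpha_r$ is an isometric $*$-automorphism, so $|\alpha_r(x)| = \alpha_r(|x|)$) to bound
\[f_i(t) \;\le\; \int_G \phi(r)\, p_{\omega\circ\alpha_r}\bigl(g_i - (\lambda_{r^{-1}tr}\otimes\alpha_{r^{-1}tr})g_i\bigr)\, dr.\]
By (1), the integrand tends to $0$ pointwise in $r$ as $i \to \infty$ and is dominated by $2\omega(1)\phi(r) \in L^1(G)$, so dominated convergence gives $f_i(t) \to 0$ for each $t \in G$. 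Equicontinuity and pointwise convergence then combine via a standard Arzel\`a--Ascoli argument (cover $K$ by finitely many $\delta$-balls determined by the equicontinuity modulus, and verify pointwise convergence at the finitely many centers) to conclude $\sup_{t\in K} f_i(t) \to 0$, which is (2). The main obstacle is that Lebesgue's dominated convergence theorem can fail along arbitrary nets, so the bounded convergence step requires care; one handles this by refining $(g_i)$ to a cofinal sub-net (or a sequence) along which dominated convergence does hold at each of the finitely many test points demanded by Arzel\`a--Ascoli on $K$ for a given $\omega$, which is permissible because (2) only asserts the existence of some net.
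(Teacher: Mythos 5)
Your reduction (2)$\Rightarrow$(1) is trivially right, your choice of mollified net $\widetilde{g}_i=\phi\star g_i$ is exactly the paper's first move (there $f\star g_i$), and your equicontinuity estimate
$p_\omega\bigl((\lambda_s\otimes\alpha_s)\widetilde{g}_i-(\lambda_t\otimes\alpha_t)\widetilde{g}_i\bigr)\le\omega(1)\,\norm{\lambda_s\phi-\lambda_t\phi}_{L^1(G)}$
is correct (it is essentially the paper's inequality (\ref{e:prepineq2}) combined with $L^1$-continuity of translation), as is the identity $p_\omega\circ(\lambda_r\otimes\alpha_r)=p_{\omega\circ\alpha_r}$ and the resulting bound $f_i(t)\le\int_G\phi(r)\,p_{\omega\circ\alpha_r}\bigl(g_i-(\lambda_{r^{-1}tr}\otimes\alpha_{r^{-1}tr})g_i\bigr)\,dr$. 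The gap is the step you yourself flag: dominated (indeed bounded) convergence genuinely fails for nets, and your proposed repair does not close it. There is no general principle producing a cofinal subnet along which dominated convergence holds (the standard counterexample --- $h_F=\chi_{[0,1]\setminus F}$ indexed by finite sets $F$, pointwise null, uniformly bounded, with $\int h_F\equiv1$ --- is stable under passing to cofinal subnets), and one cannot reduce to a sequence because hypothesis (1) is a weak* condition over all of $M_*$ and all $t\in G$, which an extracted sequence need not retain. Nor does it help that statement (2) only asserts existence of \emph{some} net: after your Arzel\`a--Ascoli reduction you still need, for each finite set of pairs $(t_j,\omega)$ and each $\varepsilon$, a \emph{common} index $i$ making all the integrals $\int\phi(r)\,p_{\omega\circ\alpha_r}(\cdots)\,dr$ small, and ``$0$ is a cluster value of each'' does not supply one; you really need convergence of each such integral along the net, which is precisely what DCT was supposed to give. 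The underlying obstruction is that your bound invokes hypothesis (1) at a continuum of group elements $r^{-1}tr$ simultaneously, and the functions $t'\mapsto\int_G|g_i(s)-(\lambda_{t'}\otimes\alpha_{t'})(g_i)(s)|\,ds$ are not equicontinuous uniformly in $i$ (think of $g_i$ concentrating at a point), so the integral cannot be discretized.

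The paper's proof (following \cite[Proposition 6.10]{Pier}) is built to avoid exactly this: it introduces a second mollifier $h=|V|^{-1}\chi_V$ and partitions the relevant compact set into finitely many Borel pieces $B_j$ with $B_jB_j^{-1}\subseteq W$ and representatives $b_j$, so that hypothesis (1) is only ever invoked at the \emph{finitely many} group elements $b_j$, paired with the norm-compact family of functionals $\{(\alpha_t)_*\omega: t\in V\}$ via Lemma \ref{l:bdd w*=ucc}; finitely many conditions admit a common index $i_0$, and the remaining error terms are controlled by translation-continuity estimates of the type you proved, uniformly in $i$. If the index set of $(g_i)$ were $\mathbb{N}$ your argument would be complete and much shorter than the paper's, but the lemma must handle arbitrary nets (the nets fed into it in Theorem \ref{t:Reiter} come from weak* approximation and Mazur-type convexity arguments). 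To repair your proof you would need to replace the dominated convergence step by a discretization of the $r$-integral of the Pier type --- at which point you have essentially reproduced the paper's argument.
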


\begin{proof} Since $(2)$ clearly implies $(1)$, we only need to show $(1)$ implies $(2)$. In preparation, note that $C_c(G,M_c)$ is a left module over the algebra $\mathcal M_c(G)$ of compactly supported Radon measures on $G$ via the action
	\[ \mu \star g(s) = \int_G (\lambda_t \otimes \alpha_t)(g)(s) \, d \mu(t) = \int_G \alpha_t(g(t^{-1}s)) \, d\mu(t), \]
for $\mu \in \mathcal M_c(G)$, $g \in C_c(G,M_c)$, and $s \in G$.
(In fact, this action extends to give the injective Banach space tensor product $L^1(G) \otimes^{\epsilon} M_c$ a left Banach $\mathcal M(G)$-module action, but we will not need this.)
Note also that for $g \in C_c(G,M_c)$, $\mu \in \mathcal M_c(G)$, and $\omega \in M_*^+$,
\begin{align*}
\la \om, \int_G |\mu \star g(s)| \, ds \ra
& = \int_G \la \om, \left| \int_G \alpha_t(g(t^{-1}s)) \, d\mu(t) \right| \ra \, ds \\
& \leq \int_G \la \om, \int_G |\alpha_t(g(t^{-1}s))| \, d|\mu|(t) \ra \, ds \\
& = \int_G \int_G \la \om, |\alpha_t(g(t^{-1}s)) | \ra \, ds \, d|\mu|(t) \\
& = \int_G \int_G \la \om, \alpha_t(|g(s)|) \ra \, ds \, d|\mu|(t) \\
& = \int_G \int_G \la (\alpha_t)_*(\om), |g(s)| \ra \, ds \, d|\mu|(t) \\
& = \int_G \la (\alpha_t)_*(\om), \int_G |g(s)| \, ds \ra \, d|\mu|(t)
\numberthis \label{e:prepineq1}
\\
& \leq \|\om\| \|\mu\| \left\| \int_G |g(s)| \, ds \right\|.
\numberthis \label{e:prepineq2}
\end{align*}
(Note that commutativity of $M_c$ is used in the first inequality.)


Let $(g_i)$ be a net as in $(1)$, and fix $f\in C_c(G)^+$ with $\int_G f(s) \, ds = 1$. We will show that $(f \star g_i)$ satisfies $(2)$. It is immediate that $f \star g_i \in C_c(G,M_c^+)$, and the equality $\int_G f \star g_i(s) \, ds = 1$ is straightforward to check. 


The remainder of the proof closely follows that of \cite[Proposition 6.10]{Pier}. Let $C\subseteq G$ be compact, $\om \in (M_*)_{\norm{\cdot}=1}^+$, and $\ep>0$. Put $C_1=C\cup\{e\}$, and $\delta=\ep/6$. There exists a neighborhood $U$ of $e$ such that $\norm{\lm_{y}f-f}_{\LO}<\delta$, whenever $y\in U$ (by \cite[20.4]{HR}). Since $C_1$ is compact, there exists a compact neighbourhood $V$ of $e$ such that $t^{-1}Vt\subseteq U$ for every $t\in C_1$ (by \cite[4.9]{HR}). Hence for every $r\in V$ and $t\in C_1$, $\norm{\lm_{t^{-1}rt}f-f}_{\LO}<\delta$. Let $h=|V|^{-1}\chi_V$. Then for $t\in C_1$ we have
\begin{align*}\norm{h\ast(\lm_{t}f)-\lm_{t}f}_{\LO}&=\int_G\bigg|\int_Gh(r)f(t^{-1}r^{-1}s) \ dr-\int_Gh(r)f(t^{-1}s) \ dr\bigg| \ ds\\
&\leq\int_G h(r)\bigg(\int_G|f(t^{-1}r^{-1}ts)-f(s)| \ ds\bigg)\ dr\\
&=\int_V h(r)\norm{\lm_{t^{-1}rt}f-f}_{\LO} \ dr<\delta. \numberthis \label{e:prepineq3}
\end{align*}
Let $C' = C_1 \supp(f)$. Then $C'$ is compact, and for every $t \in C_1$,
\begin{equation}\label{e:d1} \int_{G \setminus C'} f(t^{-1}s) \, ds = 0. \end{equation}
Pick a compact neighbourhood $W$ of $e$ such that $\norm{h\ast\delta_t-h}_{\LO} = \norm{\lambda_{t^{-1}}h^* - h^*}_{\LO}<\delta$ for every $t\in W$ (by \cite[20.4]{HR} again). Then there is an open neighbourhood $W'$ of $e$ for which $W'W'^{-1}\subseteq W$. As $C'$ is compact, there are $c_1,...,c_m\in C'$ such that $C'\subseteq\cup_{i=1}^m W'c_i$. Since each $W'c_i$ satisfies $W'c_i(W'c_i)^{-1}\subseteq W$, there exists a finite partition $\{B_j\mid j=1,...,n\}$ for $C'$ consisting of non-empty Borel sets such that $B_jB_j^{-1}\subseteq W$ for all $j$. For every $j=1,...,n$ choose $b_j\in B_j$. Then for all $t\in B_j$
\begin{equation}\label{e:d2}\norm{h\ast\delta_t-h\ast\delta_{b_j}}_{\LO}=\norm{h\ast\delta_{tb_{j}^{-1}}-h}_{\LO}<\delta.\end{equation}

Now, using norm-compactness of $\{(\alpha_t)_*(\om)\mid t\in V\}$ and Lemma \ref{l:bdd w*=ucc} again, condition (1) implies that for some index value $i_0$,
$$\la(\alpha_t)_*(\om),\int_G |g_i(s)-(\delta_{b_j}\star g_i)(s)| \ ds\ra<\delta,$$
for every $i \geq i_0$, $j=1,...,n$ and $t\in V$. To simplify notation for the following calculations, fix $g=g_i$ for $i \geq i_0$. Then for $j=1,...,n$, it follows by the inequality (\ref{e:prepineq1}) above that
\begin{align*}
\la\om, \int_G |h\star\delta_{b_j}\star g(s)-h\star g(s)| \ ds\ra
&\leq \int_G h(t) \, \la(\alpha_t)_*(\om), \int_G | (\delta_{b_j} \star g - g)(s)| \, ds \ra \ dt\\
&< \int_G h(t) \delta \, dt\\
& = \delta. \numberthis \label{e:d3}
\end{align*}

Now, for $t\in C_1$, we have by inequalities (\ref{e:prepineq2}) and (\ref{e:prepineq3}) 
\begin{align*}\int_G\la\om,|(\lm_t f) \star g(s) - h \star (\lm_t f) \star g(s)| \ra \, ds
&= \int_G \la \om, |(\lm_t f - h * (\lm_tf))\star g(s)| \ra \, ds \\
& \leq \| \om\| \|\lm_t f - h *(\lm_t f) \| \left\| \int_G |g(s)| \, ds \right\| \\
& < \delta. \numberthis \label{e:d4}
\end{align*}
Also, if $f'=\lm_{t}f$ for $t \in C_1$, then $f'$ is a state and applying (\ref{e:d1}), (\ref{e:d2}), (\ref{e:d3}), and (\ref{e:prepineq2}), we see that
\begin{align*}
&\int_G\la\om, |(h\star f'\star g-h\star g)(s)|\ra \ ds\\
&\leq\int_G\int_G f'(r)\la\om,|(h\star\delta_r\star g-h\star g)(s)|\ra \ dr \ ds\\
&=\int_G \int_{C'} f'(r)\la\om, |(h\star\delta_r\star g-h\star g)(s)|\ra \ dr \ ds\\
&\leq \int_G\bigg(\sum_{j=1}^n\int_{B_j}f'(r)\la\om,|((h\ast\delta_r-h\ast\delta_{b_j})\star g)(s)|\ra\ dr \ +\\
& \ \ \ \ \ \ \ \sum_{j=1}^n\int_{B_j}f'(r)\la\om,|(h\star\delta_{b_j}\star g-h\star g)(s)|\ra \ dr\bigg) \ ds\\
&=\sum_{j=1}^n\int_{B_j}f'(r)\int_G\la\om,|((h\ast\delta_r-h\ast\delta_{b_j})\star g)(s)|\ra \ ds \ dr \ +\\
& \ \ \ \ \ \ \ \sum_{j=1}^n\int_{B_j}f'(r)\int_G\la\om,|(h\star\delta_{b_j}\star g-h\star g)(s)|\ra \ ds \ dr\\
&<2\delta. \numberthis \label{e:d5}
\end{align*}
Finally, let $t\in C$. Since $t\in C_1$ and $e\in C_1$, by (\ref{e:d4}) and (\ref{e:d5}) we have
\begin{align*}&\int_G\la\om,|(\delta_{t}\star(f\star g)-f\star g)(s)|\ra \ ds\\
&\leq \int_G\la\om,|(f'\star g-h\star(f'\star g))(s)|\ra \ ds \ +\\
&\ \ \ \ \ \ \ \int_G\la\om, |(h\star(f'\star g))-h\star g)(s)|\ra \ ds + \int_G\la\om,|(h\star g-h\star f\star g)(s)|\ra \ ds \ +\\
&\ \ \ \ \ \ \ \int_G\la\om,|(h\star f\star g-f\star g)(s)|\ra \ ds\\
&<\delta+2\delta+2\delta+\delta=\ep.
\end{align*}
It follows that the net $(f\star g_i)$ satisfies
\begin{equation*}\label{e:ucconv}w^*\lim_i\int_G|f\star g_i(s)-(\lm_t\ten\alpha_t)(f\star g_i)(s)| \ ds=0,\end{equation*}
uniformly for $t$ in compact subsets of $G$.
\end{proof}

We are now in position to generalize \cite[Th\'{e}or\`{e}me 3.3]{AD87} to locally compact groups. The equivalences in the next theorem were independently obtained for exact locally compact groups using different techniques by Buss--Echterhoff--Willett in the recent work \cite{BEW}. 


\begin{thm}\label{t:Reiter} Let $(M,G,\alpha)$ be a $W^*$-dynamical system. The following conditions are equivalent:
\begin{enumerate}
\item There exists a net $(h_i)$ of positive type functions in $C_c(G,Z(M)_c)$ such that
\begin{enumerate}
\item $h_i(e)=1$ for all $i$;
\item $\lim_i h_i(t)=1$ weak*, uniformly on compact subsets.
\end{enumerate}
\item There exists a net $(\xi_i)$ in $C_c(G,Z(M)_c)$ such that
\begin{enumerate}
\item $\la\xi_i,\xi_i\ra=1$ for all $i$;
\item $\la\xi_i,(\lm_t\ten\alpha_t)\xi_i\ra\rightarrow 1$ weak*, uniformly on compact subsets.
\end{enumerate}
\item There exists a net $(g_i)$ in $K_1^+(G,Z(M)_c)$ such that
$\int_G |(\lm_t\ten\alpha_t)g_i(s)-g_i(s)| \ ds\rightarrow 0$ weak*, uniformly on compact subsets.
\item There exists a $G$-equivariant projection of norm one from $\LI\oten M$ onto $M$.
\item There exists a $G$-equivariant projection of norm one from $\LI\oten Z(M)$ onto $Z(M)$. 
\end{enumerate}
\end{thm}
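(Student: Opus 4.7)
The plan is to establish the five conditions equivalent via the cycle $(4) \Rightarrow (5) \Rightarrow (3) \Rightarrow (2) \Rightarrow (1)$, augmented by $(3) \Rightarrow (4)$ to close the loop $(3) \Leftrightarrow (4) \Leftrightarrow (5)$, and a Naimark-type dilation argument for $(1) \Rightarrow (5)$. For $(4) \Rightarrow (5)$, one restricts a $G$-equivariant conditional expectation $P : L^{\infty}(G) \overline{\otimes} M \to M$; the $M$-bimodule property forces $P(L^{\infty}(G) \overline{\otimes} Z(M)) \subseteq Z(M)$, since for $z \in L^{\infty}(G) \overline{\otimes} Z(M)$ and $m \in M$ we have $P(z) m = P(z(1 \otimes m)) = P((1 \otimes m) z) = m P(z)$.

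The main technical step is $(5) \Rightarrow (3)$. Applying Lemma \ref{l:AD} with the commutative von Neumann algebra $Z(M)$ in place of $M$ (and $S = G$ with left Haar measure), one approximates the given $G$-equivariant projection pointwise weak* by maps $P_{g_i}$ with $g_i \in K_1^+(G, Z(M)_c)$. A change of variables using left invariance of Haar measure yields
\[
P_{g_i}((\lambda_t \otimes \alpha_t) F) - \alpha_t(P_{g_i}(F)) = \int_G \alpha_t(F(t^{-1} s)) \, [g_i(s) - (\lambda_t \otimes \alpha_t) g_i(s)] \, ds,
\]
so asymptotic $G$-equivariance of $(P_{g_i})$ is equivalent to the $\tau_F$-convergence of $g_i - (\lambda_t \otimes \alpha_t) g_i \to 0$ in $L^1(G, Z(M))$ at each fixed $t$. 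A Mazur convex-combination argument (together with linearity of $\lambda_t \otimes \alpha_t$ and convexity of $K_1^+(G, Z(M)_c)$) then produces a new net satisfying simultaneous $\tau_F$-convergence over all $t$. Lemma \ref{l:top} upgrades this to $\tau_n$-convergence, giving the pointwise-in-$t$ Reiter condition, and Lemma \ref{l:topamenw*} upgrades pointwise-in-$t$ convergence to uniform convergence on compact subsets of $G$.

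The remaining implications are more routine. For $(3) \Rightarrow (2)$, set $\xi_i = g_i^{1/2} \in C_c(G, Z(M)_c^+)$ (using continuity of the square root in the commutative $C^*$-algebra $Z(M)_c$); the Hilbert $Z(M)$-module Cauchy--Schwarz inequality combined with the pointwise bound $(a - b)^2 \leq |a^2 - b^2|$ (valid for $a, b \geq 0$ in a commutative $C^*$-algebra) yields $|\omega(1 - \langle \xi_i, (\lambda_t \otimes \alpha_t) \xi_i \rangle)|^2 \leq \omega\bigl(\int_G |g_i(s) - (\lambda_t \otimes \alpha_t) g_i(s)| \, ds\bigr) \to 0$. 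For $(2) \Rightarrow (1)$, define $h_i(t) = \langle \xi_i, (\lambda_t \otimes \alpha_t) \xi_i \rangle$; positivity of type follows from the Gram matrix identity $\alpha_{s_i}(h_i(s_i^{-1} s_j)) = \langle (\lambda_{s_i} \otimes \alpha_{s_i}) \xi_i, (\lambda_{s_j} \otimes \alpha_{s_j}) \xi_i \rangle$. For $(3) \Rightarrow (4)$, define $P_i(F) = \int_G F(s) g_i(s) \, ds$ on $L^{\infty}(G) \overline{\otimes} M$ (well-defined as a weak*-Bochner integral since $g_i$ is central and compactly supported); the estimate $|\omega(x z)| \leq \|x\| \, \omega(|z|)$ for $x \in M$, $z \in Z(M)$ self-adjoint, and $\omega$ a state (valid because $y \mapsto \omega(y |z|)$ is a positive functional of norm $\omega(|z|)$), combined with the Reiter property, shows the $P_i$ are asymptotically $G$-equivariant uniformly on compacta, so any weak* cluster point in the weak*-compact set of unital positive $M$-bimodule maps is a $G$-equivariant conditional expectation.

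Finally, for $(1) \Rightarrow (5)$, the plan is a Naimark-type dilation: each positive type $h_i$ with values in the commutative $Z(M)_c$ yields a Hilbert $Z(M)$-module $E_{h_i}$ (the separation--completion of $C_c(G, Z(M)_c)$ under the form $\langle f, g \rangle_{h_i} = \iint f(s)^* \alpha_s(h_i(s^{-1} t)) g(t) \, ds \, dt$) carrying a cocycle-unitary $G$-action satisfying $\langle (\lambda_t \otimes \alpha_t) f, (\lambda_t \otimes \alpha_t) g \rangle_{h_i} = \alpha_t \langle f, g \rangle_{h_i}$, together with a canonical unit vector $\xi_{h_i}$ for which $h_i(t) = \langle \xi_{h_i}, (\lambda_t \otimes \alpha_t) \xi_{h_i} \rangle_{h_i}$. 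The identity $\langle \xi_{h_i} - (\lambda_t \otimes \alpha_t) \xi_{h_i}, \, \xi_{h_i} - (\lambda_t \otimes \alpha_t) \xi_{h_i} \rangle_{h_i} = 2(1 - \re h_i(t))$ together with $h_i \to 1$ uniformly on compacta makes $\xi_{h_i}$ asymptotically $G$-invariant in $E_{h_i}$, and defining $\Psi_i(F) = \langle \xi_{h_i}, F \cdot \xi_{h_i} \rangle_{h_i}$ for $F \in L^{\infty}(G) \overline{\otimes} Z(M)$ (acting by pointwise multiplication, bounded by commutativity of $Z(M)$) produces approximating u.c.p.\ $Z(M)$-bimodule maps whose weak* cluster point is the required $G$-equivariant projection. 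The main obstacle here is making the pointwise multiplication action of $L^{\infty}(G) \overline{\otimes} Z(M)$ on $E_{h_i}$ rigorous for merely weak*-measurable $F$, and extracting a weak* cluster point of $(\Psi_i)$ despite the $i$-dependence of the ambient module $E_{h_i}$.
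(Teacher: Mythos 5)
Most of your cycle is sound and close to the paper's: $(4)\Rightarrow(5)$ by restriction, $(5)\Rightarrow(3)$ via Lemma \ref{l:AD}, Mazur, Lemma \ref{l:top} and Lemma \ref{l:topamenw*}, and $(3)\Rightarrow(4)$ via cluster points of the $P_{g_i}$ all match the paper; your direct $(3)\Rightarrow(2)$ with $\xi_i=g_i^{1/2}$ and the pointwise inequality $(a-b)^2\leq|a^2-b^2|$ is a correct inline substitute for the paper's citation of \cite[Lemme 3.2]{AD87}, and $(2)\Rightarrow(1)$ is the same Gram-matrix observation.

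The genuine gap is in $(1)\Rightarrow(5)$: the ``pointwise multiplication'' action of $L^\infty(G)\overline{\otimes}Z(M)$ on the dilation module $E_{h_i}$ does not exist. The obstacle is not measurability of $F$, as you suggest, but boundedness: multiplication of the symbol $f\in C_c(G,Z(M)_c)$ by a bounded function is in general unbounded (indeed not well defined on the separation quotient) for the semi-inner product $\la f,g\ra_{h}=\iint f(s)^*\alpha_s(h(s^{-1}t))g(t)\,ds\,dt$. Already in the scalar case $G=\mathbb{R}$, $h(t)=(1-|t|)^+=\la\chi_{[0,1]},\lambda_t\chi_{[0,1]}\ra$, one has $\|f\|_h=\|f*\chi_{[0,1]}\|_{L^2}$; taking $f_\delta=\epsilon^{-1}(\chi_{[0,\epsilon]}-\chi_{[\delta,\delta+\epsilon]})$ and the unimodular $m$ equal to $-1$ on $[\delta,\delta+\epsilon]$ and $1$ elsewhere gives $\|f_\delta\|_h\approx\sqrt{2\delta}$ while $\|mf_\delta\|_h\approx 2$, so multiplication by $m$ has norm at least of order $\delta^{-1/2}$. (With $h\equiv 1$ on a compact group, multiplication does not even descend to the quotient.) Consequently your maps $\Psi_i$ are not defined, and you have no implication leaving condition $(1)$, so $(1)$ is only shown to follow from the other four. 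The repair is exactly the paper's route: because $h_i$ is compactly supported and of positive type, \cite[Proposition 2.5]{AD87} (the module analogue of $P(G)\cap C_c(G)\subseteq A(G)$) realizes $h_i(t)=\la\xi_i,(\lambda_t\otimes\alpha_t)\xi_i\ra$ with $\xi_i$ in the \emph{concrete} module $L^2(G,Z(M)_c)$, on which both translation and multiplication by $L^\infty(G)\overline{\otimes}Z(M)$ are bounded; after renormalizing to achieve $\la\xi_i,\xi_i\ra=1$ this gives $(1)\Rightarrow(2)$, and your remaining implications close the loop.
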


\begin{proof} $(2) \Rightarrow (1)$ is obvious by taking $h_i(t) = \la\xi_i,(\lambda_t \otimes \alpha_t)\xi_i \ra$ (noting that the compact support of $\xi_i$ implies the range of $h_i$ indeed lies in the norm closed subalgebra $Z(M)_c$).

$(1)\Rightarrow(2)$: By \cite[Proposition 2.5]{AD87} there exists a net $(\xi_i)$ in $L^2(G,Z(M)_c)$ satisfying properties (2)(a) and (2)(b). By norm density of $C_c(G,Z(M)_c)$ in $L^2(G,Z(M)_c)$, a further approximation yields a net $(\eta_i)$ in $C_c(G,Z(M)_c)$ satisfying $\la\eta_i,\eta_i\ra\leq 1$ for all $i$, $\la\eta_i,\eta_i\ra\rightarrow 1$ in norm, and property (2)(b). By continuity of the continuous functional calculus on $[0,1]$, it follows that $\sqrt{\la\eta_i,\eta_i\ra}\rightarrow 1$ in norm. Pick $i_0$ such that $\norm{1-\sqrt{\la\eta_i,\eta_i\ra}}<1/2$ for all $i\geq i_0$, and redefine $\xi_i(s):=\la\eta_i,\eta_i\ra^{-1/2}\eta_i(s)$, for $i\geq i_0$. Then by commutativity
$$\la\xi_i,\xi_i\ra=\la\eta_i,\eta_i\ra^{-1}\la\eta_i,\eta_i\ra=1,  \ \ \ i\geq i_0,$$
and
$$\la\xi_i,(\lm_t\ten\alpha_t)\xi_i\ra=\la\eta_i,\eta_i\ra^{-1/2}\alpha_t(\la\eta_i,\eta_i\ra^{-1/2})\la\eta_i,(\lm_t\ten\alpha_t)\eta_i\ra\rightarrow 1$$
weak*. Thus, the $(\xi_i)_{i\geq i_0}$ satisfies (2).

$(2)\Leftrightarrow(3)$ follows more or less immediately from \cite[Lemme 3.2]{AD87} applied to the commutative $C^*$-dynamical system $(Z(M)_c,G,\alpha)$.

$(3)\Rightarrow(4)$: Suppose there exists a net $(g_i)$ in $K_1^+(G,Z(M)_c)$ satisfying condition 3 above.
By the properties of $(g_i)$, each $P_{g_i}$ is a positive unital $M$-bimodule map. Passing to a subnet we may assume that $(P_{g_i})$ converges weak* to some $P$ in $\mc{B}(\LI\oten M,M)$, which is necessarily a projection of norm one.

Fix $t\in G$, $F\in(\LI\oten M)^+$ and $\om\in M_*^+$. Choose a representation for $F$ with values in $M_+$. Then
\begin{align*}\la P(\lambda_t\ten \alpha_t(F)),\om\ra&=\lim_i\int_G\la g_i(s)(\lambda_t\ten\alpha_t)(F)(s),\om\ra \ ds\\
&=\lim_i\int_G\la \alpha_t(\alpha_{t^{-1}}(g_i(s))F(t^{-1}s)),\om\ra \ ds\\
&=\lim_i\int_G\la \alpha_{t^{-1}}(g_i(s))F(t^{-1}s),(\alpha_t)_*(\om)\ra \ ds\\
&=\lim_i\int_G\la \alpha_{t^{-1}}(g_i(ts))F(s),(\alpha_t)_*(\om)\ra \ ds\\
&=\lim_i\int_G\la ((\lm_{t^{-1}}\ten\alpha_{t^{-1}})g_i)(s)F(s),(\alpha_t)_*(\om)\ra \ ds.\end{align*}
Since $(\lm_t\ten\alpha_t)g_i(s)-g_i(s)\in Z(M)_c$ is self-adjoint for each $s\in G$, we have
\begin{align*}&\la ((\lm_{t^{-1}}\ten\alpha_{t^{-1}})g_i-g_i)(s)F(s),(\alpha_t)_*(\om)\ra\\
&=\la \sqrt{F(s)}((\lm_{t^{-1}}\ten\alpha_{t^{-1}})g_i-g_i)(s)\sqrt{F(s)},(\alpha_t)_*(\om)\ra\\
&\leq\la \sqrt{F(s)}|((\lm_{t^{-1}}\ten\alpha_{t^{-1}})g_i-g_i)(s)|\sqrt{F(s)},(\alpha_t)_*(\om)\ra\\
&=\la |((\lm_{t^{-1}}\ten\alpha_{t^{-1}})g_i-g_i)(s)|F(s),(\alpha_t)_*(\om)\ra\\
&\leq \norm{F}\la|((\lm_{t^{-1}}\ten\alpha_{t^{-1}})g_i-g_i)(s)|,(\alpha_t)_*(\om)\ra,
\end{align*}
for every $s,t\in G$. Property $3$ of $(g_i)$ then implies that
$$\la P(\lambda_t\ten \alpha_t(F)),\om\ra=\lim_i\int_G\la g_i(s)F(s),(\alpha_t)_*(\om)\ra=\la\alpha_t(P(F)),\om\ra,$$
which yields (4).

$(4)\Rightarrow(5)$ is obvious by restriction, using the $1\ten M-M$-bimodule property of projections of norm one $\LI\oten M\rightarrow M$.

$(5) \Rightarrow (3)$ is all that remains. Let $P:\LI\oten Z(M)\rightarrow Z(M)$ be a $G$-equivariant projection of norm one. By Lemma \ref{l:AD} applied to the commutative von Neumann algebra $ Z(M)$, $P$ lies in the point-weak* closure of $\P_K$. Hence, there is a net $(g_i)$ of functions in $K_1^+(G, Z(M)_c)$ satisfying
$$P(F)=w^* \lim_iP_{g_i}(F)= w^* \lim_i\int_G g_i(s)F(s) \ ds, \ \ \ F\in L^\infty(G) \overline{\otimes} Z(M).$$
The $G$-equivariance of $P$ implies that
$$\lim_i\int_G \la g_i(s)(\lambda_t\ten\alpha_t)(F)(s),\om\ra  ds=\lim_i\int_G\la g_i(s)F(s),(\alpha_t)_*(\om)\ra \ ds $$
for all $F\in L^\infty(G) \overline{\otimes} Z(M)$, $\om\in  Z(M)_*$ and $t\in G$. But
$$\int_G \la g_i(s)(\lambda_t\ten\alpha_t)(F)(s),\om\ra \ ds=\int_G \la (\lambda_{t^{-1}}\ten\alpha_{t^{-1}})(g_i)(s)F(s),(\alpha_t)_*(\om)\ra \ ds,$$
as shown above, so it follows that $((\lambda_{t}\ten\alpha_{t})(g_i)-g_i)\rightarrow 0$ with respect to $\tau_F$ (on $L^1(G,Z(M))$) for all $t\in G$. Just as in \cite[pg.\ 307]{AD87}, one can use Lemma \ref{l:top} applied to $V=K_1^+(G,Z(M)_c)$ and an argument involving direct sums of copies of $L^1(G,Z(M))$ to show the existence of a net $(g_j)$ in $K_1^+(G, Z(M)_c)$ such that $((\lambda_{t}\ten\alpha_{t})(g_j)-g_j)\rightarrow 0$ with respect to $\tau_n$ for all  $t\in G$, which implies a pointwise version of (3). Property (3) then follows from Lemma \ref{l:topamenw*}.
\end{proof}

\begin{remark}\label{r:Reiter} The corresponding result remains true if (1)(a) is replaced with $h_i(e)\leq 1$ for all $i$, (2)(a) is replaced with $\la\xi_i,\xi_i\ra\leq 1$ for all $i$ and the net $(g_i)$ in (3) satisfies $g_i\in C_c(G,Z(M)_c^+)$, $\int_G g(s) \ ds\leq 1$ instead of $g_i\in K^+_1(G,Z(M)_c)$. This follows by the same techniques, using a ``contractive'' version of Lemma \ref{l:AD}.
\end{remark}

As a corollary to Theorem \ref{t:Reiter} (and its proof), we obtain a different proof of the fact that a $W^*$-dynamical system $(M,G,\alpha)$ over an arbitrary locally compact group $G$ is amenable if and only if the restricted action $( Z(M),G,\alpha)$ is amenable \cite[Corollaire 3.6]{ADII}.

For actions of second countable locally compact groups $G$ on standard Borel spaces $(X,\mu)$ with a quasi-invariant measure $\mu$, amenability of $(L^\infty(X,\mu),G,\alpha)$ implies that $\pi_X$ is weakly contained in $\lm$ \cite[Corollary 3.2.2]{AD03}, where $\pi_X$ is the associated unitary representation of $G$ on $L^2(X,\mu)$. As a corollary to Theorem \ref{t:Reiter}, we obtain a generalization of this fact to arbitrary $(M,G,\alpha)$.

\begin{cor}\label{c:Reiter} Let $(\pi,u)$ be a normal covariant representation of a $W^*$-dynamical system $(M,G,\alpha)$. If $(M,G,\alpha)$ is amenable then $u$ is weakly contained in $\lm$.
\end{cor}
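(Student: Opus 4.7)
The plan is to use Theorem~\ref{t:Reiter}(2) to extract a net $(\xi_i) \subseteq C_c(G, Z(M)_c)$ with $\la\xi_i, \xi_i\ra = 1$ and $\la\xi_i, (\lm_s \ten \alpha_s)\xi_i\ra \to 1$ weak* uniformly on compact subsets of $G$, and transport it through the covariant pair $(\pi, u)$ in order to approximate each diagonal coefficient of $u$ by coefficients of $\lm$. Fix a unit vector $h \in H_\pi$ and define $\eta_i \in L^2(G, H_\pi)$ by $\eta_i(s) := \pi(\xi_i(s))h$; compact support of $\xi_i$ together with normality of $\pi$ gives $\|\eta_i\|^2 = \la\pi(\la\xi_i,\xi_i\ra)h, h\ra = 1$. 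A direct computation using the covariance relation $u_t \pi(x) = \pi(\alpha_t(x))u_t$ and normality of $\pi$ (to pull $\pi$ outside the integral) yields
\[\psi_i(t) := \la(\lm_t \ten u_t)\eta_i, \eta_i\ra = \la\pi\bigl(\la\xi_i,(\lm_t \ten \alpha_t)\xi_i\ra\bigr)u_t h, h\ra.\]

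The first main step is to show that $\psi_i(t) \to \la u_t h, h\ra$ uniformly for $t$ in compact subsets $K \subseteq G$. Setting $\nu_t := \la\pi(\cdot)u_t h, h\ra \in M_*$, this reduces to $\sup_{t \in K}\bigl|\nu_t\bigl(1 - \la\xi_i,(\lm_t \ten \alpha_t)\xi_i\ra\bigr)\bigr| \to 0$. The SOT-continuity of $u$ (automatic for a covariant representation, cf.\ \S\ref{subsection: ds}) and normality of $\pi$ ensure that $t \mapsto \nu_t$ is norm-continuous into $M_*$, so $\{\nu_t : t \in K\}$ is norm-compact. Combining this with the uniform Cauchy--Schwarz bound $\|\la\xi_i,(\lm_t \ten \alpha_t)\xi_i\ra\| \leq 1$ and the weak* convergence from Theorem~\ref{t:Reiter} (uniform in $t \in K$), a standard $\epsilon$-net argument---approximating the compact family $\{\nu_t : t \in K\}$ by finitely many representatives $\nu_{t_1}, \dots, \nu_{t_N}$ and applying the uniform weak* convergence against each---delivers the desired estimate. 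This is the main technical obstacle, since the hypothesis only supplies pointwise weak* convergence against each fixed functional, whereas $\nu_t$ varies with $t$.

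The second step is Fell's absorption principle. The pointwise unitary $W: L^2(G, H_\pi) \to L^2(G, H_\pi)$ defined by $(W\eta)(s) := u_s\eta(s)$ satisfies $W^*(\lm_t \ten u_t)W = \lm_t \ten 1_{H_\pi}$, so setting $\zeta_i := W^*\eta_i$ (also a unit vector) gives $\psi_i(t) = \la(\lm_t \ten 1)\zeta_i, \zeta_i\ra$. Expanding $\zeta_i$ along any orthonormal basis of $H_\pi$ expresses $\psi_i$ as an absolutely convergent sum of diagonal coefficients of $\lm$; in particular each $\psi_i$ is a positive definite function associated with $\lm \ten 1_{H_\pi}$, and hence lies in the weak-containment closure of positive coefficients of $\lm$. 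Since every diagonal coefficient $t \mapsto \la u_t h, h\ra$ of $u$ is thus a uniform-on-compacta limit of positive definite functions weakly associated with $\lm$, the standard Fell--Dixmier criterion gives $u \prec \lm$.
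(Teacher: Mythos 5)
Your proposal is correct and follows essentially the same route as the paper: the paper also invokes Theorem~\ref{t:Reiter}(2) and defines $\eta_i(t)=u(t^{-1})\xi_i(t)v$ (exactly your $\zeta_i=W^*\eta_i$), then checks that $\la\eta_i,\lm_s\eta_i\ra=\la v,\la\xi_i,(\lm_s\ten\alpha_s)\xi_i\ra u_sv\ra\to\la v,u_sv\ra$ uniformly on compacta. Your write-up merely makes explicit the $\varepsilon$-net/norm-compactness argument (the paper's Lemma~\ref{l:bdd w*=ucc} pattern) and the Fell criterion that the paper leaves implicit.
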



\begin{proof}
Let $(\xi_i)$ be as in Theorem \ref{t:Reiter} (2). Fix $v \in H$, and define $\eta_i: G \to H$ by $\eta_i(t) = u(t^{-1})\xi_i(t)v$. Then $\eta_i \in L^2(G,H)$, and a calculation similar to one in the proof of \cite[Theorem 5.3]{AD} gives \[\la \eta_i, \lm_s\eta_i \ra = \la v, \la \xi_i,(\alpha_s \otimes \lm_s)\xi_i \ra u_s v \ra \to \la v, u_s v \ra\] uniformly on compact subsets of $G$.
\end{proof}

\subsection{Herz-Schur Multipliers}

The theory of Herz-Schur multipliers has recently been generalized to the setting of dynamical systems \cite{BC,BC2,MTT,MSTT}. In this subsection we build on this work by providing an explicit representation of Herz-Schur multipliers arising from compactly supported positive type functions for arbitrary $(M,G,\alpha)$, along with a multiplier characterization of amenability. We begin with preliminaries on Hilbert $C^*$-modules associated to dynamical systems.

Let $(A,G,\alpha)$ be a $C^*$-dynamical system. We let $L^2(G,A)$ be the right Hilbert $A$-module given by the completion of $C_c(G,A)$ under $\norm{\xi}=\norm{\la\xi,\xi\ra}_A^{1/2}$, where
$$\la\xi,\zeta\ra=\int_G\xi(s)^*\zeta(s) \ ds, \ \ \ \xi\cdot a(s)=\xi(s)a, \ \ \ \xi,\zeta\in C_c(G,A), \ a\in A.$$
To simplify notation we let $\widetilde{\alpha}_t\in \mc{B}(L^2(G,A))$ denote the isometry
$$\wtal_t\xi(s):=(\lm_t\ten\alpha_t)\xi(s)=\alpha_t(\xi(t^{-1}s)), \ \ \ \xi\in C_c(G,A).$$
By left invariance of the Haar measure and continuity of the action it follows that 
$$\la\wtal_t\xi,\wtal_t\zeta\ra=\alpha_t(\la\xi,\zeta\ra), \ \ \ \xi,\zeta\in L^2(G,A), \ t\in G.$$
We assume throughout that $A\subseteq\BH$ non-degenerately. Then $\alpha:A\rightarrow C_b(G,A)\subseteq\mc{B}(L^2(G,H))$ is a strict $*$-homomorphism, and viewing $L^2(G,H)$ as a right Hilbert $C^*$-module over $\bC$, we may form the interior tensor product $L^2(G,A)\ten_\alpha L^2(G,H)$ \cite[Proposition 4.5]{Lance}. This becomes a Hilbert space with inner product given on simple tensors by
$$\la\xi_1\ten_\alpha\eta_1,\xi_2\ten_\alpha\eta_2\ra=\la\eta_1,\alpha(\la\xi_1,\xi_2\ra)\eta_2\ra.$$
Letting $\pi:A\ni a\mapsto 1\ten a\in\mc{B}(L^2(G,H))$, we also implicitly use the interior tensor product $L^2(G,A)\ten_\pi L^2(G,H)$, which is a Hilbert space under the inner product
$$\la\xi_1\ten_\pi\eta_1,\xi_2\ten_\pi\eta_2\ra=\la\eta_1,(1\ten\la\xi_1,\xi_2\ra)\eta_2\ra.$$
The map
$$L^2(G,A)\ten_\pi L^2(G,H)\ni \xi\ten_\pi\eta\mapsto \xi\cdot\eta \in L^2(G\times G,H)$$
extends to a unitary operator, where
$$\xi\cdot\eta(s,t)=\xi(s)\eta(t), \ \ \ s,t\in G.$$
Indeed, for any $\xi_1,...,\xi_n\in C_c(G,A)$ and $\eta_1,...,\eta_n\in C_c(G,H)$, 
\begin{align*}\norm{\sum_{i=1}^n\xi_i\cdot\eta_i}_{L^2(G\times G,H)}^2&=\iint\sum_{i,j=1}^n\la\xi_i(s)\eta_i(t),\xi_j(s)\eta_j(t)\ra_H \ ds \ dt\\
&=\sum_{i,j=1}^n\iint\la\eta_i(t),\xi_i(s)^*\xi_j(s)\eta_j(t)\ra_H \ ds \ dt\\
&=\sum_{i,j=1}^n\int\la\eta_i(t),\la\xi_i,\xi_j\ra\eta_j(t)\ra_H \ dt\\
&=\sum_{i,j=1}^n\la\eta_i,(1\ten\la\xi_i,\xi_j\ra)\eta_j\ra_{L^2(G,H)}\\
&=\sum_{i,j=1}^n\la\xi_i\ten_\pi\eta_i,\xi_j\ten_\pi\eta_j\ra\\
&=\norm{\sum_{i=1}^n\xi_i\ten_\pi\eta_i}^2. \numberthis \label{e:pi}
\end{align*}
The map is therefore an isometry. That it also has dense range follows from non-degeneracy of $A\subseteq\BH$ using a bai for $A$.
 
We let $W_\alpha:L^2(G,A)\ten_\alpha L^2(G,H)\rightarrow L^2(G\times G,H)$ be the map determined by 
$$W_\alpha(\xi\ten \eta)(s,t)=(\wtal_{t^{-1}}\xi)(s)\eta(t)=\alpha_{t^{-1}}(\xi(ts))\eta(t), \ \ \ \xi\in C_c(G,A), \ \eta\in C_c(G,H).$$
Since
\begin{align*}W_\alpha(\xi\cdot a\ten\eta)(s,t)&=\alpha_{t^{-1}}(\xi(ts)a)\eta(t)=\alpha_{t^{-1}}(\xi(ts))\alpha_{t^{-1}}(a)\eta(t)\\
&=\alpha_{t^{-1}}(\xi(ts))(\alpha(a)\eta)(t)\\
&=W_\alpha(\xi\ten\alpha(a)\eta)(s,t),
\end{align*}
it follows that $W_\alpha$ induces a unitary $W_\alpha:L^2(G,A)\ten_\alpha L^2(G,H)\rightarrow L^2(G\times G,H)$, since
\begin{align*}\norm{W_\alpha\bigg(\sum_{i=1}^n\xi_i\ten_\alpha\eta_i\bigg)}^2&=\iint\norm{\sum_{i=1}^nW_\alpha(\xi_i\ten_\alpha\eta_i)(s,t)}^2_H \ ds \ dt\\
&=\iint\norm{\sum_{i=1}^n\wtal_{t^{-1}}\xi_i(s)\eta_i(t)}^2_H \ ds \ dt\\
&=\iint\sum_{i,j=1}^n\la\wtal_{t^{-1}}\xi_i(s)\eta_i(t),\wtal_{t^{-1}}\xi_j(s)\eta_j(t)\ra_H \ ds \ dt\\
&=\iint\sum_{i,j=1}^n\la\eta_i(t),\wtal_{t^{-1}}\xi_i(s)^*\wtal_{t^{-1}}\xi_j(s)\eta_j(t)\ra_H \ ds \ dt\\
&=\int\sum_{i,j=1}^n\la\eta_i(t),\la\wtal_{t^{-1}}\xi_i,\wtal_{t^{-1}}\xi_j\ra\eta_j(t)\ra_H \ dt\\
&=\int\sum_{i,j=1}^n\la\eta_i(t),\alpha_{t^{-1}}(\la\xi_i,\xi_j\ra)\eta_j(t)\ra_H \ dt\\
&=\sum_{i,j=1}^n\la\eta_i,\alpha(\la\xi_i,\xi_j\ra)\eta_j\ra_{L^2(G,H)}\\
&=\sum_{i,j=1}^n\la\xi_i\ten_\alpha\eta_i,\xi_j\ten_\alpha\eta_j\ra\\
&=\norm{\sum_{i=1}^n\xi_i\ten_\alpha\eta_i}^2.
\end{align*}
This fact was observed for discrete dynamical systems in \cite[Lemma 4.9]{BC}. By covariance of $(\alpha, \lm\ten 1)$, one easily sees that $\wtal_t\ten(\lm_t\ten1)$ induces an invertible map on $L^2(G,A)\ten_\alpha L^2(G,H)$, and the standard argument shows that
\begin{equation}\label{e:fund}W_\alpha^*(1\ten(\lm_t\ten 1))W_\alpha=\wtal_t\ten(\lm_t\ten1), \ \ \ t\in G.\end{equation}
Also, whenever $a\in A$ commutes with the range of $\xi\in L^2(G,A)$, in particular, when $\xi\in L^2(G,Z(A))$, we have
\begin{align*}W_\alpha(\xi\ten_\alpha\alpha(a)\eta)(s,t)&=W_\alpha(\xi\cdot a\ten\eta)(s,t)\\
&=\alpha_{t^{-1}}(\xi\cdot a(ts))\eta(t)\\
&=\alpha_{t^{-1}}(a\xi(ts))\eta(t)\\
&=\alpha_{t^{-1}}(a)\alpha_{t^{-1}}(\xi(ts))\eta(t)\\
&=(1\ten\alpha(a))(W_\alpha(\xi\ten_\alpha\eta))(s,t).
\end{align*}
Thus, 
\begin{equation}\label{e:center} W_\alpha(\xi\ten_\alpha\alpha(a)\eta)=(1\ten\alpha(a))W_\alpha(\xi\ten_\alpha\eta).\end{equation}

When $(A,G,\alpha)=(\bC,G,\mathrm{trivial})$, $W_\alpha$ is simply the fundamental unitary of the quantum group $VN(G)$.

The following are special cases of \cite[Definitions 3.1,3.3]{MTT} when $F$ is assumed bounded and continuous.

\begin{defn}\cite[Definitions 3.1,3.3]{MTT} Let $(A,G,\alpha)$ be a $C^*$-dynamical system. A bounded continuous function $F:G\rightarrow\mc{CB}(A)$ is:
\begin{enumerate}
\item a (completely positive) \textit{Herz-Schur $(A,G,\alpha)$-multiplier} if the map 
$$\Theta(F)(\alpha\times\lm)(f)=(\alpha\times\lm)(F\cdot f), \ \ \ f\in C_c(G,A),$$
extends to a completely (positive) bounded map on $G\ltimes A$, where $F\cdot f(s)=F(s)(f(s))$, $s\in G$.
\item a (completely positive) \textit{Herz-Schur multiplier} if the map 
$$\Theta(F)(\alpha(a)(\lm_s\ten 1))=\alpha(F(s)(a))(\lm_s\ten 1), \ \ \ a\in A, \ s\in G,$$
extends to a normal completely (positive) bounded map on $(G\ltimes A)''$ (the weak*-closure of $G \ltimes A$ in $B(L^2(G,H))$).
\end{enumerate}
\end{defn}

By \cite[Remark 3.4]{MTT}, when $A$ is separable, a Herz-Schur multiplier is automatically a Herz-Schur $(A,G,\alpha)$-multiplier. Their argument (for continuous $F$ and $f\in C_c(G,A)$) extends verbatim to arbitrary $(A,G,\alpha)$. As mentioned on \cite[Page 403]{MTT}, when $A=\bC$, both conditions are equivalent to $F$ defining a completely bounded multiplier of the Fourier algebra $A(G)$, as in that case, the associated maps on $C^*_\lm(G)$ admit canonical weak* continuous extensions to $VN(G)$. Such an extension is not ensured to exist in general, hence the two definitions. 

We now show that any element $\xi\in C_c(G,A)$ defines a completely positive Herz-Schur multiplier via $h_\xi(s)(a)=\la\xi,(1\ten a)(\lm_s\ten\alpha_s)\xi\ra$. For discrete dynamical systems, this latter fact follows from \cite[Theorem 2.8]{MSTT} and/or \cite[Theorem 4.8]{BC}.   

\begin{prop}\label{p:CP} Let $(A,G,\alpha)$ be a $C^*$-dynamical system. For each $\xi\in C_c(G,A)$, the function $h:G\rightarrow\mc{CB}(A)$ given by
$$h(s)(a)=\la\xi,(1\ten a)(\lm_s\ten\alpha_s)\xi\ra, \ \ \ s\in G, \ a\in A,$$ 
defines a normal completely positive map $\Theta(h)$ on $(G\ltimes A)''$ satisfying $\norm{\Theta(h)}_{cb}=\norm{h(e)}$, 
\begin{equation}\label{e:map2}\Theta(h)(\alpha(a)(\lm_s\ten 1))=\alpha(h(s)(a))(\lm_s\ten 1),  \ \ \ a\in A, \ s\in G,\end{equation}
and
\begin{equation}\label{e:map}\Theta(h)(\alpha \times\lambda(f))=\alpha \times\lambda(h\cdot f),  \ \ \ f\in C_c(G,A).\end{equation}
When $(A,G,\alpha)=(M_c,G,\alpha)$ for a $W^*$-dynamical system $(M,G,\alpha)$, then 
\begin{equation}\label{e:map3}\Theta(h)(\alpha(x)(\lm_s\ten 1))=\alpha(h(s)(x))(\lm_s\ten 1),  \ \ \ x\in M, \ s\in G.\end{equation}
\end{prop}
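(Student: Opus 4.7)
My plan is to realize $\Theta(h)$ as an explicit compression built from the fundamental unitary $W_\alpha$. Consider the bounded linear map
\[ V_\xi: L^2(G,H) \to L^2(G,A)\ten_\alpha L^2(G,H), \qquad V_\xi\eta = \xi\ten_\alpha\eta, \]
whose adjoint is $V_\xi^*(\zeta\ten_\alpha\mu) = \alpha(\la\xi,\zeta\ra)\mu$, so that $V_\xi^*V_\xi = \alpha(\la\xi,\xi\ra)$. Since $\alpha:A\to C_b(G,A)\subseteq\mc{B}(L^2(G,H))$ is an injective $*$-homomorphism and hence isometric, $\|V_\xi\|^2 = \|\la\xi,\xi\ra\| = \|h(e)\|$. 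Define
\[ \Theta(h)(T) := (W_\alpha V_\xi)^*(1\ten T)(W_\alpha V_\xi), \qquad T\in\mc{B}(L^2(G,H)). \]
This is completely positive (composition of the normal $*$-homomorphism $T\mapsto 1\ten T$ with a compression) and normal (both steps are weak*-weak* continuous), with $\|\Theta(h)\|_{cb} = \|\Theta(h)(1)\| = \|V_\xi^*V_\xi\| = \|h(e)\|$.

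To verify (\ref{e:map2}), I complement (\ref{e:fund}) with the companion intertwiner
\[ (1\ten\alpha(a))W_\alpha = W_\alpha(\pi_A(a)\ten 1), \qquad a\in A, \]
where $\pi_A(a)\in\mc{B}(L^2(G,A))$ denotes left multiplication by $a$, which is a bounded adjointable right-$A$-module operator with $\pi_A(a)^*=\pi_A(a^*)$. This is a direct pointwise computation on simple tensors $\xi\ten_\alpha\eta$: both sides act on $(s,t)$ by $\alpha_{t^{-1}}(a\xi(ts))\eta(t)$. Combining with (\ref{e:fund}) yields
\[ W_\alpha^*(1\ten\alpha(a)(\lm_s\ten1))W_\alpha = (\pi_A(a)\wtal_s)\ten(\lm_s\ten1), \]
and sandwiching between $V_\xi$ and $V_\xi^*$ produces
\[ \Theta(h)(\alpha(a)(\lm_s\ten1))\eta = \alpha(\la\xi,\pi_A(a)\wtal_s\xi\ra)(\lm_s\ten1)\eta, \]
where a short integral calculation identifies $\la\xi,\pi_A(a)\wtal_s\xi\ra = \int_G\xi(t)^*a\alpha_s(\xi(s^{-1}t))\,dt = h(s)(a)$. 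This proves (\ref{e:map2}). Equation (\ref{e:map}) then follows by writing $\alpha\times\lm(f)=\int_G\alpha(f(t))(\lm_t\ten1)\,dt$ and using the weak* continuity of $\Theta(h)$ to pass the integral through. Since (\ref{e:map2}) shows $\Theta(h)$ sends the weak*-dense generating set $\{\alpha(a)(\lm_s\ten1):a\in A,\,s\in G\}$ into $(G\ltimes A)''$, normality together with the weak*-closedness of $(G\ltimes A)''$ force $\Theta(h)((G\ltimes A)'')\subseteq(G\ltimes A)''$.

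For the final assertion, (\ref{e:map2}) applied to $(A,G,\alpha)=(M_c,G,\alpha)$ is exactly (\ref{e:map3}) on $M_c$. Both sides of (\ref{e:map3}) are weak*-continuous in $x\in M$: the LHS because $x\mapsto\alpha(x)(\lm_s\ten1)$ is normal and so is $\Theta(h)$; the RHS because $h(s):M\to M$ is itself normal, since for each $\om\in M_*$, $\om\circ h(s)$ equals the Bochner integral over the compact set $\supp\xi$ of the functional $t\mapsto\alpha_s(\xi(s^{-1}t))\cdot\om\cdot\xi(t)^*\in M_*$, hence itself lies in $M_*$. Weak*-density of $M_c$ in $M$ (noted in Subsection~2.2) then extends (\ref{e:map3}) from $M_c$ to all of $M$. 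The main obstacle is the pointwise verification of the companion intertwiner $(1\ten\alpha(a))W_\alpha = W_\alpha(\pi_A(a)\ten 1)$ together with the bookkeeping between the two interior tensor products $\ten_\alpha$ and $\ten_\pi$; once established, the remaining steps are formal consequences of the compression picture.
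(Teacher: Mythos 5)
Your proposal is correct, but it takes a genuinely different route from the paper. The paper proves the result dually: it shows that $\Phi\mapsto h^*\cdot\Phi$ preserves positive definiteness in $B(G\ltimes_f A)$ via Pedersen's criterion \cite[Proposition 7.6.8]{Ped}, observes that compact support forces the image into $A(G\ltimes_f A)\cong (G\ltimes A)''_*$, and obtains $\Theta(h)$ as the adjoint of this preadjoint map; normality and complete positivity come for free from that duality. You instead build $\Theta(h)$ directly as the compression $V_\xi^*W_\alpha^*(1\ten\,\cdot\,)W_\alpha V_\xi$, and the key new ingredient is the intertwining relation $(1\ten\alpha(a))W_\alpha=W_\alpha(\pi_A(a)\ten 1)$ for \emph{left} multiplication $\pi_A(a)$ --- which, unlike the right-module action $\xi\mapsto\xi\cdot a$ exploited in (\ref{e:center}), is an adjointable $A$-module operator for every $a\in A$, so $\pi_A(a)\ten 1$ is well defined on the interior tensor product without any centrality hypothesis. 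I checked the pointwise verification and the subsequent bookkeeping (including the covariance needed to make sense of $\wtal_s\ten(\lm_s\ten 1)$, the identification $\la\xi,\pi_A(a)\wtal_s\xi\ra=h(s)(a)$, normality of $h(s)$ on $M$ via the Bochner integral in $M_*$, and the weak*-totality argument placing the range in $(G\ltimes A)''$), and they all go through. What each approach buys: the paper's duality argument produces the explicit preadjoint $h^*$ on $A(G\ltimes_f A)$, which is reused later (e.g.\ in Proposition \ref{p:C*amen}); your compression argument is more self-contained, yields the stronger statement that $\Theta(h)$ extends to a normal completely positive map on all of $\mc{B}(L^2(G,H))$, and --- worth flagging explicitly --- shows that the representation (\ref{e:rep}) of Proposition \ref{p:CP2} holds for arbitrary $\xi\in C_c(G,A)$, not just central-valued $\xi$, thereby answering affirmatively the question raised in the remark preceding that proposition (since your compression and the paper's $\Theta(h)$ are both normal and agree on the weak*-total set $\{\alpha(a)(\lm_s\ten 1)\}$, they coincide).
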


\begin{proof} We first consider the map at the level of $B(G\ltimes_f A)$. Let $\Phi\in B(G\ltimes_f A)^+$, and let $\varphi, \sigma$ be as in Equation (\ref{eqn: FS relation}) in Subsection \ref{subsection: ds}. We claim that $h^*\cdot\Phi\in B(G\ltimes_f A)^+$, where 
$$h^*\cdot\Phi\ni G\ni s\mapsto h(s)^*(\Phi(s))\in A^*.$$
By \cite[Proposition 7.6.8]{Ped}, it suffices to show
$$\sum_{j,k=1}^n\la (h^*\cdot\Phi)(t_j^{-1}t_k),\alpha_{t_j^{-1}}(a_j^*a_k)\ra\geq0$$
for any $t_1,...,t_n\in G$ and $a_1,...,a_n\in A$. We compute,
\begin{align*}&\sum_{j,k=1}^n\la (h^*\cdot\Phi)(t_j^{-1}t_k),\alpha_{t_j^{-1}}(a_j^*a_k)\ra
=\sum_{j,k=1}^n\la \Phi(t_j^{-1}t_k),h(t_j^{-1}t_k)(\alpha_{t_j^{-1}}(a_j^*a_k))\ra\\
&=\sum_{j,k=1}^n\la \Phi(t_j^{-1}t_k),\la\xi,(1\ten\alpha_{t_j^{-1}}(a_j^*a_k))(\lm_{t_j^{-1}t_k}\ten\alpha_{t_j^{-1}t_k})\xi\ra\ra\\
&=\sum_{j,k=1}^n\la \Phi(t_j^{-1}t_k),\la(1\ten\alpha_{t_j^{-1}}(a_j))(\lm_{t_j}\ten 1)\xi,(1\ten\alpha_{t_j^{-1}}(a_k))(\lm_{t_k}\ten\alpha_{t_j^{-1}t_k})\xi\ra\ra\\
&=\sum_{j,k=1}^n\la \Phi(t_j^{-1}t_k),\alpha_{t_j^{-1}}(\la(1\ten a_j)(\lm_{t_j}\ten \alpha_{t_j})\xi,(1\ten a_k)\lm_{t_k}\ten\alpha_{t_k}\xi\ra)\ra\\
&=\int_G\sum_{j,k=1}^n\la \Phi(t_j^{-1}t_k),\alpha_{t_j^{-1}}(\alpha_{t_j}(\xi(t_j^{-1}s))^*a_j^*a_k\alpha_{t_k}(\xi(t_k^{-1}s)))\ra \, ds\\
&=\int_G\sum_{j,k=1}^n\la \vphi,\pi(\alpha_{t_j^{-1}}(\alpha_{t_j}(\xi(t_j^{-1}s))^*a_j^*a_k\alpha_{t_k}(\xi(t_k^{-1}s))))\sigma(t_j^{-1}t_k)\ra \, ds\\
&=\int_G\sum_{j,k=1}^n\la \vphi,\sigma(t_j^{-1})\pi(\alpha_{t_j}(\xi(t_j^{-1}s))^*a_j^*a_k\alpha_{t_k}(\xi(t_k^{-1}s)))\sigma(t_k)\ra \, ds\\
&=\int_G\vphi\bigg(\bigg(\sum_{j=1}^na_j\alpha_{t_j}(\xi(t_j^{-1}s))\sigma(t_j)\bigg)^*\bigg(\sum_{k=1}^na_k\alpha_{t_k}(\xi(t_k^{-1}s))\sigma(t_k)\bigg)\bigg)\ra \, ds\\
&\geq0.
\end{align*}
We therefore obtain a well-defined linear map on $B(G\ltimes_f A)=\mathrm{span}B(G\ltimes_f A)^+$ by the Jordan decomposition. 

Since $(M_n(\bC)\ten A,G,\id_{M_n}\ten\alpha)$ is a $C^*$-dynamical system satisfying $M_n(\bC)\ten(G\ltimes_f A)\cong G\ltimes_f(M_n(\bC)\ten A)$ canonically (by \cite[Lemma 2.75]{W}), and since \cite[Proposition 7.6.8]{Ped} applies to any $C^*$-dynamical system, the matricial analogue of the above argument together with the previous identification shows that the linear map 
$$h^*:B(G\ltimes_f A)\ni \Phi\mapsto h^*\cdot \Phi\in B(G\ltimes_f A)$$
is completely positive. Moreover, since $h$ is compactly supported and compactly supported elements of $B(G\ltimes_f A)^+$ lie in $A(G\ltimes_f A)^+$ \cite[Lemma 7.7.6]{Ped}, it follows that
$$h^*:B(G\ltimes_f A)\ni \Phi\mapsto h^*\cdot \Phi\in A(G\ltimes_f A)$$
Since $A(G\ltimes_f A)\subseteq B(G\ltimes_f A)$ and $A(G\ltimes_f A)=(G\ltimes A)''_*\subseteq (G\ltimes A)^*$, by restriction, $h^*$ induces a  completely positive map on $A(G\ltimes_f A)$, whose adjoint $\Theta(h)$ is normal and completely positive on $(G\ltimes A)''$ . Moreover, for each $a\in A$, $s\in G$ and $v\in A(G\ltimes_f A)$,
\begin{align*}\la\Theta(h)(\alpha(a)(\lm_s\ten 1)),v\ra&=\la \alpha(a)(\lm_s\ten 1),h^*(v)\ra\\
&=\la a,h(s)^*(v(s))\ra\\
&=\la h(s)(a),v(s)\ra\\
&=\la\alpha(h(s)(a))(\lm_s\ten 1),v\ra.
\end{align*}
Hence, $\Theta(h)$ satisfies equation (\ref{e:map2}). A similar argument shows that
$$\Theta(h)(\alpha\times\lambda(f))=\alpha\times\lambda(h\cdot f),  \ \ \ f\in C_c(G,A),$$
where $h\cdot f(s)=h(s)(f(s))$, $s\in G$. Taking a bai $(a_i)$ for $A$ which converges strictly (and hence weak*) to the identity of the non-degenerate representation space $H$ of $A$, we have
$$\Theta(h)(1_{(G\ltimes A)''})=w^*\lim_i\Theta(h)(\alpha(a_i))=w^*\lim_i\alpha(h(e)(a_i))=\alpha(\la\xi,\xi \ra).$$ 
By complete positivity, 
$$\norm{\Theta(h)}_{cb}=\norm{\Theta(h)(1)}=\norm{\la\xi,\xi\ra}=\norm{h(e)}.$$

When $(A,G,\alpha)=(M_c,G,\alpha)$ for a $W^*$-dynamical system $(M,G,\alpha)$, then equation (\ref{e:map3}) follows from (\ref{e:map2}), weak* density of $M_c$ in $M$ and normality of $\Theta(h)$ and $\alpha$. Note that in this case we view
$$h(s)(x)=\la\xi,(1\ten x)(\lm_s\ten\alpha_s)\xi\ra\in M$$
in the obvious way as $\xi\in L^2(G,M_c)\subseteq L^2(G,M)$.
\end{proof}

\begin{remark}
For $\xi \in C_c(G, \ell^2(A))$, the function $h(s)(a) = \la \xi, (1 \ten 1 \ten a)(\lm_s \ten 1 \ten \alpha_s) \xi \ra$ also satisfies the conclusions of Proposition \ref{p:CP}. This may be seen by applying Proposition \ref{p:CP} to the functions $h_k$ associated to $\xi_k = P_k \circ \xi$, where $P_k: \ell^2(A) \to A$ is the canonical $k^{th}$ coordinate projection. Then $h(s) = \sum_{k=1}^\infty h_k(s)$ and $\Theta(h) = \sum_{k=1}^\infty \Theta(h_k)$.
\end{remark}

If the range of $\xi$ in Lemma \ref{p:CP} lies in $Z(A)$, then $\Theta(h)$ admits an explicit representation in terms of the fundamental unitary $W_\alpha$, which we now show. It is not clear whether this particular representation is valid for all $\xi\in C_c(G,A)$, although related representations are known to exist at the level of equivariant representations of discrete dynamical systems (see the proof of \cite[Theorem 4.8]{BC}).

In the following, $\om_\xi \ten_\alpha \id$ denotes the map $B(L^2(G,A) \ten_\alpha L^2(G,H)) \to B(L^2(G,H))$, defined so that for $T \in B(L^2(G,A) \ten_\alpha L^2(G,H))$, $(\om_\xi \ten_\alpha \id)(T)$ is the operator in $B(L^2(G,H))$ determined by the sesquilinear form $(\eta_1,\eta_2) \mapsto \la \xi \ten_\alpha \eta_1, T(\xi \ten_\alpha \eta_2) \ra$.

\begin{prop}\label{p:CP2} Let $(A,G,\alpha)$ be a $C^*$-dynamical system. Let $\xi\in C_c(G,Z(A))$ and
$$h(s)=\la\xi,(\lm_s\ten\alpha_s)\xi\ra, \ \ \ s\in G.$$ 
be the associated positive type function. Viewing $h: G \to \mc{CB}(A)$ via multiplication, $h(s)(a) = h(s)a$, the Herz-Schur multiplier $\Theta(h)$ satisfies
\begin{equation}\label{e:rep}\Theta(h)(x)=(\om_\xi\ten_\alpha\id)(W_\alpha^*(1\ten x)W_\alpha), \ \ \ x\in (G\ltimes A)''.\end{equation}
\end{prop}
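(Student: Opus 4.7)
The plan is to verify \eqref{e:rep} on the weak*-total set $\{\alpha(a)(\lambda_s \otimes 1) : a \in A,\, s \in G\}$ and then promote to all of $(G \ltimes A)''$ using normality of both sides in $x$. The left-hand side is normal by Proposition \ref{p:CP}. For the right-hand side, I would observe that the bounded operator $V : L^2(G,H) \to L^2(G,A) \otimes_\alpha L^2(G,H)$ defined by $V\eta = \xi \otimes_\alpha \eta$ (with $V^*V = \alpha(\langle \xi,\xi \rangle)$) realizes $\omega_\xi \otimes_\alpha \mathrm{id}$ as $V^*(\cdot)V$; hence $x \mapsto V^* W_\alpha^*(1 \otimes x) W_\alpha V$ is a composition of normal maps.

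For the core computation, I would fix $a \in A$, $s \in G$, set $x = \alpha(a)(\lambda_s \otimes 1)$, and factor $1 \otimes x = (1 \otimes \alpha(a))(1 \otimes (\lambda_s \otimes 1))$. First I would use \eqref{e:fund} to push $1 \otimes (\lambda_s \otimes 1)$ through $W_\alpha$, yielding
\[
W_\alpha^*(1 \otimes x) W_\alpha(\xi \otimes_\alpha \eta) \;=\; W_\alpha^*(1 \otimes \alpha(a)) W_\alpha\bigl(\widetilde{\alpha}_s \xi \otimes_\alpha (\lambda_s \otimes 1)\eta\bigr).
\]
Next I would apply \eqref{e:center}, which is the reason the hypothesis $\xi \in C_c(G, Z(A))$ is essential: the identity requires the first tensor leg to be $Z(A)$-valued. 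The main point to verify, and the only real technical obstacle, is that $\widetilde{\alpha}_s \xi$ remains $Z(A)$-valued so that \eqref{e:center} applies with $\widetilde{\alpha}_s \xi$ in place of $\xi$; this holds because $\widetilde{\alpha}_s$ acts pointwise by the automorphism $\alpha_s$, which preserves $Z(A)$ setwise.

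With \eqref{e:center} in hand, the displayed expression collapses to $\widetilde{\alpha}_s \xi \otimes_\alpha \alpha(a)(\lambda_s \otimes 1)\eta$. Pairing against $\xi \otimes_\alpha \eta'$ and invoking the formula for the interior tensor product inner product produces the factor $\alpha(\langle \xi, \widetilde{\alpha}_s \xi \rangle) = \alpha(h(s))$; since $h(s) \in Z(A)$, we have $\alpha(h(s))\alpha(a) = \alpha(h(s)a)$, so the right-hand side of \eqref{e:rep} evaluates to $\alpha(h(s)a)(\lambda_s \otimes 1)$ on the generator $\alpha(a)(\lambda_s \otimes 1)$. This matches $\Theta(h)(\alpha(a)(\lambda_s \otimes 1))$ as computed in \eqref{e:map2} of Proposition \ref{p:CP}, completing the verification on generators. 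Since the linear span of these generators contains $(\alpha \times \lambda)(C_c(G,A))$, whose norm closure is $G \ltimes A$ and whose weak*-closure is $(G \ltimes A)''$, normality of both sides finishes the argument.
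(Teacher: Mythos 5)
Your proof is correct and follows essentially the same route as the paper's: both rest on the commutation relations \eqref{e:fund} and \eqref{e:center} (the latter being exactly where $\xi\in C_c(G,Z(A))$ enters, and you rightly check that $\widetilde{\alpha}_s\xi$ stays $Z(A)$-valued), followed by a normality argument; the only difference is that you verify the identity on the generators $\alpha(a)(\lambda_s\otimes 1)$ via \eqref{e:map2}, while the paper verifies it on $(\alpha\times\lambda)(f)$ for $f\in C_c(G,A)$ via \eqref{e:map}. One small imprecision: the linear span of $\{\alpha(a)(\lambda_s\otimes 1)\}$ does not literally contain $(\alpha\times\lambda)(C_c(G,A))$ --- only its weak*-closed span does, since $(\alpha\times\lambda)(f)$ is a weak-operator integral of such elements --- but since you invoke normality of both sides anyway, this does not affect the argument.
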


\begin{proof} By equation (\ref{e:map})
$$\Theta(h)(\alpha\times\lm(f))=\int_G\alpha(h(s)f(s))(\lm_s\ten1) \ ds, \ \ \ f\in C_c(G,A).$$
Represent $A\subseteq\BH$ non-degenerately and view $G\ltimes A\subseteq\mc{B}(\LT\ten H)$. Fix $\eta\in C_c(G,H)$. Then for any $f\in C_c(G,A)$, the commutation relations (\ref{e:fund}) and (\ref{e:center}) imply that
\begin{align*}\la \eta,\Theta(h)((\alpha\times\lm)(f))\eta\ra
&=\int_G \la\eta,\alpha(h(s)f(s))(\lm_s\ten1)\eta\ra \ ds\\
&=\int_G \la\eta,\alpha(\la\xi,\wtal_s\xi\ra)\alpha(f(s))(\lm_s\ten 1)\eta\ra \ ds\\
&=\int_G \la\xi\ten_\alpha\eta,(\wtal_s\xi)\ten_\alpha(\alpha(f(s))(\lm_s\ten 1)\eta)\ra \ ds\\
&=\int_G \la\xi\ten_\alpha\eta,(\wtal_s\xi)\ten_\alpha(\lm_s\ten1)(\alpha(\alpha_{s^{-1}}(f(s)))\eta)\ra \ ds\\
&=\int_G \la\xi\ten_\alpha\eta,(\wtal_s\ten_\alpha(\lm_s\ten1))(\xi\ten_\alpha(\alpha(\alpha_{s^{-1}}(f(s)))\eta)\ra \ ds\\
&=\int_G \la\xi\ten_\alpha\eta,W_\alpha^*(1\ten(\lm_s\ten 1))W_\alpha(\xi\ten_\alpha(\alpha(\alpha_{s^{-1}}(f(s)))\eta)\ra \ ds\\
&=\int_G \la W_\alpha(\xi\ten_\alpha\eta),(1\ten(\lm_s\ten 1)\alpha(\alpha_{s^{-1}}(f(s))))W_\alpha(\xi\ten_\alpha\eta)\ra \ ds\\
&=\int_G \la W_\alpha(\xi\ten_\alpha\eta),(1\ten\alpha(f(s))(\lm_s\ten1))W_\alpha(\xi\ten_\alpha\eta)\ra \ ds\\
&=\la W_\alpha(\xi\ten_\alpha\eta),(1\ten(\alpha\times\lm)(f))W_\alpha(\xi\ten_\alpha\eta)\ra\\
&=\la \xi\ten_\alpha \eta,W_\alpha^*(1\ten(\alpha\times\lm)(f))W_\alpha(\xi\ten_\alpha\eta)\ra\\
&=\la\eta,(\om_{\xi}\ten_\al\id)(W_\alpha^*(1\ten(\alpha\times\lm)(f))W_\alpha)\eta\ra
\end{align*}
It follows that
$$\Theta(h)(x)=(\om_\xi\ten_\alpha\id)(W_\alpha^*(1\ten x)W_\alpha), \ \ \ x\in G\ltimes A.$$
By normality, the above representation extends to all $x\in (G\ltimes A)''$.
\end{proof}

Using the ``fundamental unitary'' $W_\alpha$ associated to the $C^*$-dynamical system $(M_c,G,\alpha)$ we now rephrase the convergence in Theorem \ref{t:Reiter} (2) at a Hilbert space level. This characterization is a dynamical systems analogue of the fundamental unitary characterization of (co-)amenability of locally compact (quantum) groups, and it leads to an approximation of the identity of $G\bar{\ltimes} M$ by completely positive Herz-Schur multipliers.

\begin{thm}\label{t:W*amen} Let $(M,G,\alpha)$ be a $W^*$-dynamical system. The following conditions are equivalent:
\begin{enumerate}
\item $(M,G,\alpha)$ is amenable; 
\item there exists a net $(\xi_i)$ in $C_c(G,Z(M)_c)$ such that
\begin{enumerate}
\item $\la\xi_i,\xi_i\ra=1$ for all $i$;
\item $\norm{W_\alpha(\xi_i\ten_\alpha\eta)-\xi_i\cdot\eta}_{L^2(G\times G,H)}\rightarrow0$, $\eta\in L^2(G,H)$;
\end{enumerate}
\item there exists a net $(\xi_i)$ in $C_c(G,M_c)$ such that
\begin{enumerate}
\item $\la\xi_i,\xi_i\ra=1$ for all $i$;
\item $\Theta(h_{\xi_i})\rightarrow\id_{G\bar{\ltimes} M}$ point weak*.
\end{enumerate}
\end{enumerate}
\end{thm}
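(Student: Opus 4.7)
I will establish the cycle $(1) \Rightarrow (2) \Rightarrow (3) \Rightarrow (1)$, with $(3)\Rightarrow(1)$ being the main obstacle.

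\emph{For $(1) \Rightarrow (2)$,} apply Theorem~\ref{t:Reiter}(2) to extract $(\xi_i)\subseteq C_c(G,Z(M)_c)$ with $\la\xi_i,\xi_i\ra = 1$ and $\la\xi_i,(\lm_t\ten\alpha_t)\xi_i\ra\to 1$ weak*, uniformly on compacta. Since $\la\xi_i,\xi_i\ra = 1$, both $\|W_\alpha(\xi_i\ten_\alpha\eta)\|^2$ and $\|\xi_i\cdot\eta\|^2$ reduce to $\|\eta\|^2$, leaving
$$\|W_\alpha(\xi_i\ten_\alpha\eta)-\xi_i\cdot\eta\|^2 = 2\|\eta\|^2 - 2\mathrm{Re}\la W_\alpha(\xi_i\ten_\alpha\eta),\xi_i\cdot\eta\ra.$$
For $\eta = \phi\ten v$ with $\phi\in C_c(G)$, $v\in H$, the change of variables $u = ts$ together with the centrality of $\xi_i$ rewrites the cross term as $\int_G|\phi(t)|^2\la v,\alpha_{t^{-1}}(\la\xi_i,(\lm_t\ten\alpha_t)\xi_i\ra)v\ra\,dt$. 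Norm compactness of $\{(\alpha_{t^{-1}})^*\om_v : t\in\supp\phi\}$ in $M_*$ (as in the proof of Corollary~\ref{c:Reiter}) combined with uniform-on-compacta weak* convergence yields uniform convergence of the integrand to $\|v\|^2$, so dominated convergence gives the limit $\|\eta\|^2$. Density of simple tensors and uniform boundedness of the operators involved extend this to arbitrary $\eta$.

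\emph{For $(2)\Rightarrow(3)$,} the net $(\xi_i)$ lies in $C_c(G,Z(M)_c)\subseteq C_c(G,M_c)$, so Proposition~\ref{p:CP2} provides the representation $\Theta(h_{\xi_i})(x) = (\om_{\xi_i}\ten_\alpha\id)(W_\alpha^*(1\ten x)W_\alpha)$, whence
$$\la\eta_1,\Theta(h_{\xi_i})(x)\eta_2\ra = \la W_\alpha(\xi_i\ten_\alpha\eta_1),(1\ten x)W_\alpha(\xi_i\ten_\alpha\eta_2)\ra.$$
Hypothesis (2)(b) and boundedness of $x$ force convergence to $\la\xi_i\cdot\eta_1,(1\ten x)\xi_i\cdot\eta_2\ra$. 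Centrality of $\xi_i$ implies each $1\ten\xi_i(s)$ commutes with $\alpha(M)$ and $\lm_t\ten 1$, hence with $G\bar{\ltimes}M$; pulling the $1\ten\xi_i(s)$ through $1\ten x$ and integrating yields $\la\eta_1, x(1\ten\la\xi_i,\xi_i\ra)\eta_2\ra = \la\eta_1, x\eta_2\ra$. Since $\|\Theta(h_{\xi_i})\|_{cb} = 1$ by Proposition~\ref{p:CP}, WOT convergence on this bounded net coincides with weak* convergence in $G\bar{\ltimes}M$.

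\emph{For $(3)\Rightarrow(1)$,} define ucp maps $\Phi_i:\LI\oten M\to M$ by $\om(\Phi_i(F)) := \int_G\om(\xi_i(s)^*F(s)\xi_i(s))\,ds$ for $\om\in M_*$, and extract a weak* cluster point $\Phi$. Applying $\Theta(h_{\xi_i})$ to $\alpha(m)\in G\bar{\ltimes}M$ and using equation~(\ref{e:map3}) together with the weak*-homeomorphicity of $\alpha$ onto its image identifies $\Phi_i(1\ten m) = h_{\xi_i}(e)(m)\to m$ weak*, so $\Phi|_{1\ten M} = \id_M$; thus $\Phi$ is a projection of norm one. A parallel application to $\lm_s\ten 1$ gives $\la\xi_i,(\lm_s\ten\alpha_s)\xi_i\ra\to 1$ weak* pointwise in $s$, which by $\la\xi_i,\xi_i\ra = \alpha_s(\la\xi_i,\xi_i\ra) = 1$ is equivalent to $\om(\la\xi_i - (\lm_t\ten\alpha_t)\xi_i,\xi_i - (\lm_t\ten\alpha_t)\xi_i\ra)\to 0$ for every $\om\in M_*^+$ and $t\in G$. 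The crux is then a Cauchy-Schwarz estimate: writing $\xi_i(tr) = \alpha_t(\xi_i(r)) + \zeta_i(tr)$ where $\zeta_i := \xi_i - (\lm_t\ten\alpha_t)\xi_i$, applying $|\om(a^*b)|\leq\om(a^*a)^{1/2}\om(b^*b)^{1/2}$, and using $\int\om_t(|\xi_i(r)|^2)\,dr = \om(1)$ with $\om_t := (\alpha_t)^*\om$ together with Cauchy-Schwarz for integrals, one obtains $|\om(\Phi_i((\lm_t\ten\alpha_t)F) - \alpha_t(\Phi_i(F)))|\to 0$. Passing to the cluster point yields $G$-equivariance of $\Phi$, hence amenability.
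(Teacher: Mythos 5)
Your proof is correct. The implications $(1)\Rightarrow(2)$ and $(2)\Rightarrow(3)$ follow the paper's argument essentially verbatim: the same reduction to Theorem~\ref{t:Reiter}(2), the same expansion of $\norm{W_\alpha(\xi_i\ten_\alpha\eta)-\xi_i\cdot\eta}^2$ on simple tensors using norm-continuity of $G\acts M_*$ and Lemma~\ref{l:bdd w*=ucc}, and the same use of Propositions~\ref{p:CP} and~\ref{p:CP2} together with centrality of $\xi_i(s)$ to identify $\la\eta,x\eta\ra$ with $\la\xi_i\cdot\eta,(1\ten x)\xi_i\cdot\eta\ra$. Where you genuinely diverge is $(3)\Rightarrow(1)$: the paper extracts $h_{\xi_i}(s)(x)\to x$ weak* from equation~(\ref{e:map3}) and then \emph{cites} the implication $(7)\Rightarrow(8)$ of \cite[Proposition 3.12]{BEW} to conclude amenability, whereas you prove that implication inline. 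Your route --- defining the ucp maps $\Phi_i(F)=\la\xi_i,(1\ten F)\xi_i\ra$, taking a weak* cluster point, deducing $\Phi|_{1\ten M}=\id_M$ from $h_{\xi_i}(e)(m)\to m$, extracting the Reiter-type convergence $\om(\la\xi_i-\wtal_t\xi_i,\xi_i-\wtal_t\xi_i\ra)\to 0$ from the case $x=1$, $\lm_s\ten 1$, and closing with the double Cauchy--Schwarz estimate $|\om(\Phi_i(\wtal_tF))-\om(\alpha_t(\Phi_i(F)))|\leq 2\norm{F}\,\om(\la\xi_i-\wtal_t\xi_i,\xi_i-\wtal_t\xi_i\ra)^{1/2}$ --- is exactly the mechanism behind the cited result, and it works here without centrality of the $\xi_i$ because only $\la\xi_i,\xi_i\ra=1$ and the pointwise Reiter convergence are needed. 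What you buy is a self-contained proof that does not lean on an external implication (whose validity for non-exact $G$ the paper has to assert separately); what the paper buys is brevity. One small point to make explicit if you write this up: $\Phi_i(F)$ must be seen to be independent of the chosen representative of $F\in\LI\oten M$, which follows by noting that $\om\circ\Phi_i$ is the pairing of $F$ against the element $s\mapsto\xi_i(s)\cdot\om\cdot\xi_i(s)^*$ of $L^1(G,M_*)$ (continuous and compactly supported since $\xi_i\in C_c(G,M_c)$).
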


\begin{proof} $(1)\Rightarrow(2)$: If $(M,G,\alpha)$ is amenable, by Theorem \ref{t:Reiter} there exists a net $(\xi_i)$ in $C_c(G,Z(M)_c)$ such that $\la\xi_i,\xi_i\ra=1$ for all $i$ and $\la\xi_i,\wtal_t\xi_i\ra\rightarrow 1$ weak*, uniformly on compact subsets. Fix a non-degenerate normal representation $M \subseteq B(H)$. Let $\eta=\eta_1\ten\eta_2$ with $\eta_1\in C_c(G)$, and $\eta_2\in H$. Let $\om_{\eta_2}$ be the associated vector functional on $B(H)$. By norm continuity of the action $G\acts M_*$ and Lemma \ref{l:bdd w*=ucc}, it follows that
$$\om_{\eta_2}(\la\wtal_{t^{-1}}\xi_i-\xi_i,\wtal_{t^{-1}}\xi_i-\xi_i\ra)=\om_{\eta_2}(\alpha_{t^{-1}}(\la\xi_i,\xi_i\ra)-2\mathrm{Re}\la\xi_i,\wtal_{t^{-1}}\xi_i\ra+\la\xi_i,\xi_i\ra)\rightarrow0$$
uniformly on compact subsets of $G$. Hence,
\begin{align*}&\norm{W_\alpha(\xi_i\ten_\alpha\eta)-\xi_i\cdot\eta}_{L^2(G\times G,H)}^2=\iint\norm{(\wtal_{t^{-1}}\xi_i(s)-\xi_i(s))\eta(t)}^2_H \ ds \ dt\\
&=\iint|\eta_1(t)|^2\norm{(\wtal_{t^{-1}}\xi_i(s)-\xi_i(s))\eta_2}^2_H \ ds \ dt\\
&=\iint|\eta_1(t)|^2\la\eta_2,(\wtal_{t^{-1}}\xi_i(s)-\xi_i(s))^*(\wtal_{t^{-1}}\xi_i(s)-\xi_i(s))\eta_2\ra_H \ ds \ dt\\
&=\int|\eta_1(t)|^2\la\eta_2,\la\wtal_{t^{-1}}\xi_i-\xi_i,\wtal_{t^{-1}}\xi_i-\xi_i\ra\eta_2\ra_H \ dt\\
&\rightarrow 0.
\end{align*}
Since linear combinations of simple tensors $\eta_1\ten\eta_2$ with $\eta_1\in C_c(G)$ and $\eta_2\in H$ are dense in $L^2(G,H)$, boundedness of $W_\alpha$ and $(\xi_i)$, together with the inequality $\norm{\xi\cdot\eta}\leq\norm{\xi}\norm{\eta}$ (which follows from (\ref{e:pi})) show that 
$$\norm{W_\alpha(\xi\ten_\alpha\eta)-\xi\cdot\eta}_{L^2(G\times G,H)}\rightarrow0,$$
for all $\eta\in L^2(G,H)$.

$(2)\Rightarrow(3)$: Pick a net $(\xi_i)$ in $C_c(G,Z(M)_c)$ satisfying (2). If $(h_i)$ denotes the corresponding positive type functions in $C_c(G,Z(M)_c)$, then Propositions \ref{p:CP} and \ref{p:CP2} applied to the $C^*$-dynamical system $(M_c,G,\alpha)$ imply that
$$\Theta(h_i)(x)=(\om_{\xi_i}\ten_\alpha\id)(W_\alpha^*(1\ten x)W_\alpha), \ \ \ x\in G\bar{\ltimes} M,$$
and $\norm{\Theta(h_i)}_{cb}=\norm{\la\xi_i,\xi_i\ra}=1$.

By boundedness of $(\Theta(h_i))$, it suffices to show that for any $x \in G \bar{\ltimes} M$ and $\eta \in L^2(G,H)$, \[| \la \eta, \Theta(h_i)(x) \eta \ra - \la \eta, x\eta \ra | \to 0.\]
To show this, note that by the representation (\ref{e:rep}),
\[ \la \eta, \Theta(h_i)(x) \eta \ra = \la W_\alpha(\xi_i \ten_\alpha \eta), (1 \ten x)W_\alpha(\xi_i \ten_\alpha \eta) \ra.\]
On the other hand, since $\la \xi,\xi \ra = 1$,
\[ \la \eta,x\eta \ra = \la \xi_i \cdot \eta, (1 \ten x)(\xi_i \cdot \eta) \ra.\]
Thus, the result follows from condition (2)(b) and the general fact that when $(y_i),(z_i)$ are bounded nets in a Hilbert space such that $y_i-z_i \to 0$, then for any operator $T$, $|\la y_i,T(y_i) \ra - \la z_i,T(z_i) \ra | \to 0$.

$(3)\Rightarrow(1)$:  By property (3), there exists a net $(\xi_i)$ of compactly supported positive type functions in $C_c(G,M_c)$ with $\la\xi_i,\xi_i\ra=1$ and whose corresponding Herz--Schur multipliers $\Theta(h_{\xi_i})$ converge to $\id_{G\bar{\ltimes}M}$ point weak*. By Proposition \ref{p:CP} applied to $(M_c,G,\alpha)$, it follows that 
$$\alpha(h_{\xi_i}(s)(x))(\lm_s\ten 1)=\Theta(h_{\xi_i})(\alpha(x)(\lm_s\ten 1))\xrightarrow{w^*}\alpha(x)(\lm_s\ten 1),$$
and therefore $\alpha(h_{\xi_i}(s)(x))\rightarrow\alpha(x)$ weak*, for each $x\in M$ and $s\in G$. Since $\alpha:M\rightarrow\LI\oten M$ is a weak*-weak* homeomorphism onto its range, it follows that 
$$\la\xi_i,(1\ten x)(\lm_s\ten\alpha_s)\xi_i\ra=h_{\xi_i}(s)(x)\xrightarrow{w^*} x, \ \ \ x\in M, \ s\in G.$$
Hence, $(\xi_i)$ satisfies condition (7) of \cite[Proposition 3.12]{BEW}. Since the implication $(7)\Rightarrow(8)$ of \cite[Proposition 3.12]{BEW} is valid for arbitrary locally compact groups, it follows that $(M,G,\alpha)$ is amenable.
\end{proof}

\section{Amenable $C^*$-dynamical systems}

In their recent study of amenability and weak containment for $C^*$-dynamical systems \cite{BEW}, Buss, Echterhoff and Willett introduced the following definitions.

\begin{defn}\cite{BEW} Let $(A,G,\alpha)$ be a $C^*$-dynamical system. Then $(A,G,\alpha)$ is:
\begin{itemize}
\item \textit{von Neumann amenable} if the universal $W^*$-dynamical system $(A_\alpha'',G,\alpha)$ is amenable;
\item \textit{amenable} if there exists a net of norm-continuous, compactly supported, positive type functions $h_i:G\rightarrow Z(A_\alpha'')$ such that $\norm{h_i(e)}\leq 1$ for all $i$, and $h_i(s)\rightarrow 1$ weak* in $A_\alpha''$, uniformly for $s$ in compact subsets of $G$;
\item \textit{strongly amenable} if there exists a net $(h_i)\in P_1(A,G,\alpha)\cap C_c(G,Z(M(A)))$ such that $h_i(s)\rightarrow 1$ strictly, uniformly on compact subsets of $G$.
\end{itemize}
\end{defn}

It was shown in \cite[Proposition 3.12]{BEW} that amenability always implies von Neumann amenability and that the conditions are equivalent when $G$ is exact. It follows from Theorem \ref{t:Reiter} (and the subsequent Remark \ref{r:Reiter}) that amenability and von Neumann amenability coincide for arbitrary $C^*$-dynamical systems.

Strong amenability always implies amenability \cite[Remark 3.6]{BEW}, however, results of Suzuki \cite{Suz} imply that for non-commutative $A$, amenability is, in general, strictly weaker than strong amenability. For commutative $A$ and discrete $G$, strong amenability coincides with amenability by \cite[Th\'{e}or\`{e}me 4.9]{AD87}. We show in Corollary \ref{c:SA} that the two notions coincide for arbitrary commutative $C^*$-dynamical systems, thus answering \cite[Question 8.1]{BEW} in the affirmative. 

Another approach to amenability is through Exel's approximation property of Fell bundles over discrete groups \cite{Exel}. This property was later generalized by Exel and Ng in \cite{EN} to Fell bundles over locally compact groups. Specializing to the case of crossed products of $C^*$-dynamical systems, they defined the \textit{C-approximation property} of $(A,G,\alpha)$ to be the existence of nets $(\xi_i)$ and $(\eta_i)$ in $C_c(G,A)$ for which $\norm{\la\xi_i,\xi_i\ra}\norm{\la\eta_i,\eta_i\ra}\leq C$ and for any $f\in C_c(G,A)$
$$\int_G\xi_i(t)^*f(s)\alpha_s(\eta_i(s^{-1}t)) \ dt\rightarrow f(s)$$
in norm, uniformly in $(s,f(s))$. If one can take $\eta_i=\xi_i$, then $(A,G,\alpha)$ has the \textit{C-positive approximation property}. Exel and Ng showed that when $A$ is nuclear and $G$ is discrete, then the approximation property implies amenability of $(A,G,\alpha)$, and conversely, the two notions are equivalent whenever $G$ is discrete and $A$ is commutative or finite-dimensional (see \cite[Section 4]{EN}).

In \cite{BC3}, B\'{e}dos and Conti generalized this notion by defining the \textit{C-weak approximation property} as the existence of an equivariant representation $(\rho,v)$ of $(A,G,\alpha)$ on a Hilbert $A$-module $E$ (see, e.g., \cite[Page 40]{BC}), and nets $(\xi_i)$ and $(\eta_i)$ in $C_c(G,E)$ for which $\norm{\la\xi_i,\xi_i\ra}\norm{\la\eta_i,\eta_i\ra}\leq C$ and
$$\la\xi_i,\rho(a)v(s)\eta_i\ra \rightarrow a, \ \ \ a \in A,$$
uniformly for $s$ in compact subsets of $G$. (This property was defined for discrete dynamical systems in \cite{BC3}, the definition above being the natural generalization.) Again, if one can take $\xi_i=\eta_i$, then $(A,G,\alpha)$ has the \textit{C-positive weak approximation property}. For discrete dynamical systems with $A$ unital, B\'{e}dos and Conti showed that the weak approximation property implies that the full and reduced crossed products coincide \cite[Theorem 4.32]{BC2}. 

By \cite[Theorem 3.28]{BEW}, it follows that the $C$-positive approximation property implies amenability. Below we establish a partial converse, showing the equivalence of amenability and a particular case of the 1-positive weak approximation property of B\'{e}dos and Conti, when $E=\ell^2(A)$. When $A$ is commutative, or more generally, when $Z(A^{**})=Z(A)^{**}$, we can take $E=A$, in which case amenability is equivalent to the 1-positive approximation property. This is a consequence of our main result of this section:

\begin{thm}\label{t:C*amen} Let $(A,G,\alpha)$ be a $C^*$-dynamical system. The following conditions are equivalent:
\begin{enumerate}
\item $(A,G,\alpha)$ is amenable;
\item there exists a net $(\xi_i)$ in $C_c(G,\ell^2(A))$ such that
\begin{enumerate}
\item $\la\xi_i,\xi_i\ra\leq 1$ for all $i$;
\item $h_{\xi_i}(e)\rightarrow\id_{A}$ in the point norm topology, and
\item $\Theta(h_{\xi_i})\rightarrow\id_{G\ltimes A}$ in the point norm topology,
\end{enumerate}
where $h_{\xi_i}(s)(a)=\la\xi_i,(1\ten 1\ten a)(\lm_s\ten1\ten \alpha_s)\xi_i\ra$ are the associated completely positive Herz-Schur multipliers;
\item there exists a net $(\xi_i)$ in $C_c(G,\ell^2(A))$ such that $\la\xi_i,\xi_i\ra\leq 1$ for all $i$ and 
$$\norm{h_{\xi_i}(s)(f(s))-f(s)}\rightarrow0, \ \ \ f\in C_c(G,A),$$
uniformly for $s$ in compact subsets of $G$;
\item $(A,G,\alpha)$ is von Neumann amenable.
\end{enumerate}
Moreover, when $Z(A^{**})=Z(A)^{**}$, the net $(\xi_i)$ can be chosen in $C_c(G,Z(A))_{\|\cdot\|_{L^2(G,Z(A))} \leq 1}$, in which case $h_i(s)(a)=a\la\xi_i,(\lm_s\ten\alpha_s)\xi_i\ra$, $s\in G$, $a\in A$.
\end{thm}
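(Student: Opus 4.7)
The plan is to close the cycle $(1)\Leftrightarrow(4)\Rightarrow(2)\Leftrightarrow(3)\Rightarrow(4)$, reducing whenever possible to the universal $W^*$-dynamical system $(A_\alpha'',G,\overline{\alpha})$ and invoking Theorems~\ref{t:Reiter} and~\ref{t:W*amen}. The equivalence $(1)\Leftrightarrow(4)$ is immediate from Theorem~\ref{t:Reiter} (together with Remark~\ref{r:Reiter}) applied to $(A_\alpha'',G,\overline{\alpha})$: the BEW-definition of amenability --- norm-continuous compactly supported positive type $h_i\colon G\to Z(A_\alpha'')$ with $\norm{h_i(e)}\leq 1$ converging weak* to $1$ uniformly on compacta --- automatically takes values in $Z(A_\alpha'')_c$ and thus matches the variant of Theorem~\ref{t:Reiter}(1) described in Remark~\ref{r:Reiter}, while von Neumann amenability is the corresponding variant of condition~(4).

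For $(4)\Rightarrow(2)$, I would apply Theorem~\ref{t:W*amen} to $(A_\alpha'',G,\overline{\alpha})$ to obtain a net $(\zeta_j)$ in $C_c(G,(A_\alpha'')_c)$ with $\la\zeta_j,\zeta_j\ra=1$ and $\Theta(h_{\zeta_j})\to\id_{G\bar{\ltimes}A_\alpha''}$ in point weak*, and then descend to $\xi_i\in C_c(G,\ell^2(A))$. The descent uses the $G$-equivariant isometry $M(A)\hookrightarrow A_\alpha''$ from Section~2 together with weak*-density of $M(A)$ in $A_\alpha''$: each $\zeta_j$ is approximated by sums of $A$-valued compactly supported functions, which are packaged into $\ell^2(A)$-valued functions via the remark after Proposition~\ref{p:CP}, so that the decomposition $h_\xi(s)(a)=\sum_k\la\xi^k,(1\ten a)(\lm_s\ten\alpha_s)\xi^k\ra$ delivers the matching completely positive Herz-Schur multipliers. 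The $\ell^2$-structure is essential because, without the hypothesis $Z(A^{**})=Z(A)^{**}$, central elements of $A_\alpha''$ need not be norm-approximable by central elements of $A$; the multi-term sum absorbs this obstruction. Uniform boundedness $\norm{\Theta(h_{\xi_i})}\leq 1$ together with a convex combinations/diagonal argument promotes the point weak* convergence on $G\bar{\ltimes}A_\alpha''$ to point norm convergence on the subalgebra $G\ltimes A$, and condition~(2)(b) follows by setting $s=e$. In the case $Z(A^{**})=Z(A)^{**}$, $Z(A_\alpha'')_c$ is accessed directly by $Z(A)$-valued continuous functions, so $\xi_i\in C_c(G,Z(A))$ suffices; centrality then lets $a$ pass through the inner product, giving $h_{\xi_i}(s)(a)=a\la\xi_i,(\lm_s\ten\alpha_s)\xi_i\ra$.

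The equivalence $(2)\Leftrightarrow(3)$ is a direct matter. For $(2)\Rightarrow(3)$, using (\ref{e:map}) the element $(\Theta(h_{\xi_i})(\alpha\times\lm(f))-\alpha\times\lm(f))(\lm_s^{-1}\ten 1)\in G\ltimes A$ has reduced-norm bounded by $\norm{\Theta(h_{\xi_i})(\alpha\times\lm(f))-\alpha\times\lm(f)}\to 0$ uniformly in $s\in G$; applying the canonical $A$-valued conditional expectation $G\ltimes A\to A$ (extending $\alpha\times\lm(g)\mapsto g(e)$) to this element yields $h_{\xi_i}(s)(f(s))-f(s)$ (up to the modular function), whose norm therefore vanishes uniformly in $s$. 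Conversely, specialising (3) to $f(s)=\phi(s)a$ with $\phi(e)=1$ recovers (2)(b), while $h_{\xi_i}\cdot f\to f$ uniformly as compactly supported $A$-valued functions implies $\norm{\alpha\times\lm(h_{\xi_i}\cdot f-f)}\to 0$ in the reduced norm, giving (2)(c).

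For $(3)\Rightarrow(4)$, specialising (3) to $f(s)=\phi(s)a$ with $\phi\in C_c(G)$ and $a\in A$ yields $h_{\xi_i}(s)(a)\to a$ in norm, uniformly on compacta, for every $a\in A$. Viewing $\xi_i\in C_c(G,\ell^2(A))\subseteq C_c(G,\ell^2(A_\alpha''))$ via $M(A)\hookrightarrow A_\alpha''$ and extending the CP multipliers $h_{\xi_i}(s)$ normally to $A_\alpha''$, uniform boundedness $\norm{h_{\xi_i}(s)}_{cb}\leq\norm{\la\xi_i,\xi_i\ra}\leq 1$ and weak*-density of $M(A)$ upgrade this norm convergence on $A$ to weak*-convergence $h_{\xi_i}(s)(x)\to x$ for every $x\in A_\alpha''$, uniformly on compacta, which is precisely condition~(7) of \cite[Proposition~3.12]{BEW} for $(A_\alpha'',G,\overline{\alpha})$ as invoked in the proof of Theorem~\ref{t:W*amen}. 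The main obstacle is step $(4)\Rightarrow(2)$: simultaneously strengthening point weak* convergence on $G\bar{\ltimes}A_\alpha''$ to point norm convergence on $G\ltimes A$ and descending from $(A_\alpha'')_c$-valued to $\ell^2(A)$-valued functions requires delicate approximation, and the $\ell^2$-direct-sum structure is precisely what sidesteps the obstruction that central elements of $A_\alpha''$ need not lift to central elements of $A$.
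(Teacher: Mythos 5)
Your overall architecture ($(1)\Leftrightarrow(4)$ via Theorem \ref{t:Reiter}, a descent from the universal $W^*$-system, the multiplier equivalence $(2)\Leftrightarrow(3)$, and $(3)\Rightarrow(4)$ via the ABF/BEW construction) matches the paper's, but two of the steps you declare routine contain genuine gaps. The most concrete one is your proof of $(2)\Rightarrow(3)$: you invoke ``the canonical $A$-valued conditional expectation $G\ltimes A\to A$ extending $\alpha\times\lm(g)\mapsto g(e)$.'' No such bounded map exists when $G$ is non-discrete. Already for $A=\bC$ and $G=\bR$ one has $\norm{\alpha\times\lm(g)}=\norm{\hat g}_\infty$, which does not control $|g(e)|$; evaluation at $e$ is unbounded for the reduced norm. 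This is precisely why the paper devotes Theorem \ref{t:posdef} to this implication: the argument there passes to the functions $v_{i,f,\eta}(s)=\la\eta,h_i(s)(f(s))u_s\eta\ra$ for $f=f_0^*\star f_0$, shows $\Delta^{1/2}v_{i,f,\eta}$ is positive definite (Lemma \ref{l:posdef}), establishes convergence in the $\tau_{nw^*}$ topology of $B(G)$, and then applies the Granirer--Leinert theorem to convert this into uniform convergence on compacta, finishing with the density Lemma \ref{l:dense}. None of this is replaceable by a conditional expectation argument outside the discrete case.

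The second gap is in your $(4)\Rightarrow(2)$ descent. Weak*-density of $M(A)$ in $A_\alpha''$ does not let you approximate $\zeta_j\in C_c(G,Z(A_\alpha'')_c)$ by elements of $C_c(G,\ell^2(A))$ while controlling the positive-type functions $s\mapsto\la\xi,(1\ten a)(\lm_s\ten\alpha_s)\xi\ra$: the paper's Proposition \ref{p:C*amen} needs the Kaplansky density theorem for Hilbert $C^*$-modules applied inside the self-dual completion $L^2(G,A)''$ (Lemma \ref{l:sdcompletion} and \cite[Corollary 2.7]{Zettl}), together with the estimate of Lemma \ref{l:Hibmod}, and it exploits in an essential way that the lifted vectors $\xi_i'$ take values in $Z(A^{**})$ so that $(1\ten a)$ commutes past them --- this is what yields $\la\xi_{i,j},(1\ten a)\xi_{i,j}\ra\to a$ rather than merely $\la\xi_{i,j},\xi_{i,j}\ra\to1$. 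The $\ell^2(A)$-valued vectors do not ``absorb'' a failure of central approximation; they arise only at the end, when convex combinations of the multipliers $h_{i,j}$ (needed to upgrade weak to norm convergence) are rewritten as single vector states on $\oplus_k C_c(G,A)$. Finally, your claim that boundedness alone promotes point-weak* convergence on $G\bar{\ltimes}A_\alpha''$ to point-norm convergence on $G\ltimes A$ skips the step of converting arbitrary functionals $u\in(G\ltimes A)^*$ into normal ones: the paper does this by choosing $v\in A(G)$ with $v\equiv1$ on $\supp(f)$ so that $v\cdot u\in A(G\ltimes_f zA)\cong(G\bar{\ltimes}A_\alpha'')_*$. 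Without that device the weak* convergence you start from says nothing about the weak topology of the reduced crossed product, and the convexity argument cannot get off the ground.
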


The outline of the proof is as follows: we first use the Kaplansky density theorem for Hilbert modules to obtain a $C^*$-Reiter type property from amenability, which is then used to deduce the Herz-Schur multiplier convergence (Proposition \ref{p:C*amen}). The equivalence of (2) and (3) follows from a more general equivalence at the level of compactly supported completely positive multipliers (Theorem \ref{t:posdef}). The final step uses the techniques from \cite[Lemma 6.5]{ABF} to deduce von Neumann amenability from the weak approximation property in (3), at which point amenability follows from Theorem \ref{t:Reiter}.

We begin with the following estimate, which will be used several times in the sequel.

\begin{lem}\label{l:Hibmod} Let $A$ be a $C^*$-algebra and $E$ be an inner product $A$-module. Then for any state $\mu\in A^*$ and $\xi,\xi',\eta,\eta'\in E$, 
$$|\mu(\la\xi,\xi'\ra-\la\eta,\eta'\ra)|\leq\norm{\la\xi,\xi\ra}^{1/2}\mu(\la\xi'-\eta',\xi'-\eta'\ra)^{1/2}+\norm{\la\eta',\eta'\ra}^{1/2}\mu(\la\xi-\eta,\xi-\eta\ra)^{1/2}.$$
\end{lem}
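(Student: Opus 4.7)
The plan is to prove this by the standard ``add and subtract'' trick combined with Cauchy--Schwarz for positive semidefinite sesquilinear forms induced by the state $\mu$.

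First I would decompose
\[ \la \xi,\xi'\ra - \la \eta,\eta'\ra = \la \xi,\xi'-\eta'\ra + \la \xi - \eta,\eta'\ra, \]
apply $\mu$ to both sides, and use the triangle inequality on $\mathbb{C}$ to reduce the estimate to bounding $|\mu(\la \xi, \xi'-\eta'\ra)|$ and $|\mu(\la \xi - \eta, \eta'\ra)|$ separately.

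Next, observe that since $\la \cdot, \cdot \ra$ is an $A$-valued inner product and $\mu$ is a (positive) state on $A$, the scalar form $(u,v) \mapsto \mu(\la u, v\ra)$ is a positive semidefinite sesquilinear form on $E$. Hence the ordinary Cauchy--Schwarz inequality applies, giving
\[ |\mu(\la u,v\ra)|^2 \leq \mu(\la u,u\ra)\,\mu(\la v,v\ra), \qquad u,v \in E. \]
Applying this with $(u,v) = (\xi, \xi'-\eta')$ and then with $(u,v) = (\xi-\eta, \eta')$, and using $\mu(\la u,u\ra) \leq \|\la u,u\ra\|$ for the ``large'' factor in each term (since $\mu$ is a state of norm one), yields
\[ |\mu(\la \xi, \xi'-\eta'\ra)| \leq \|\la\xi,\xi\ra\|^{1/2}\,\mu(\la \xi'-\eta',\xi'-\eta'\ra)^{1/2}, \]
\[ |\mu(\la \xi-\eta, \eta'\ra)| \leq \|\la\eta',\eta'\ra\|^{1/2}\,\mu(\la \xi-\eta,\xi-\eta\ra)^{1/2}. \]
Adding these two estimates gives the desired inequality.

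There is no real obstacle here; the only minor point to verify carefully is that positivity of the $A$-valued inner product survives composition with the state $\mu$, which is immediate since states are positive linear functionals and $\la u,u\ra \in A^+$ for all $u \in E$. This justifies the use of scalar Cauchy--Schwarz in the key step.
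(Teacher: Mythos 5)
Your proof is correct and follows essentially the same route as the paper: the identical decomposition $\la\xi,\xi'\ra-\la\eta,\eta'\ra=\la\xi,\xi'-\eta'\ra+\la\xi-\eta,\eta'\ra$ followed by a Cauchy--Schwarz estimate on each term. The only (harmless) difference is in packaging: you apply the scalar Cauchy--Schwarz inequality to the positive semidefinite sesquilinear form $(u,v)\mapsto\mu(\la u,v\ra)$ and then use $\mu(\la u,u\ra)\leq\norm{\la u,u\ra}$ on one factor, whereas the paper combines the Kadison--Schwarz inequality $|\mu(a)|^2\leq\mu(a^*a)$ with the operator-valued Cauchy--Schwarz inequality $\la v,u\ra\la u,v\ra\leq\norm{\la u,u\ra}\la v,v\ra$ for Hilbert modules; both yield the same bound.
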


\begin{proof} By the Schwarz inequality for completely positive maps, for any $a\in A$ we have $|\mu(a)|^2\leq\mu(a^*a), \mu(aa^*)$. Combining this with the Cauchy-Schwarz inequality \cite[Proposition 1.1]{Lance} for $E$, we have
\begin{align*}|\mu(\la\xi,\xi'\ra-\la\eta,\eta'\ra)|&=|\mu(\la\xi,\xi'-\eta'\ra+\la\xi-\eta,\eta'\ra)|\\
&\leq\mu(\la\xi'-\eta',\xi\ra\la\xi,\xi'-\eta'\ra)^{1/2}+\mu(\la\xi-\eta,\eta'\ra\la\eta',\xi-\eta\ra)^{1/2}\\
&\leq\norm{\la\xi,\xi\ra}^{1/2}\mu(\la\xi'-\eta',\xi'-\eta'\ra)^{1/2}+\norm{\la\eta',\eta'\ra}^{1/2}\mu(\la\xi-\eta,\xi-\eta\ra)^{1/2}.
\end{align*}
\end{proof}

The next lemma is surely known. We include a proof for completeness.

\begin{lem}\label{l:hten} Let $G$ be a locally compact group and $A$ be a $C^*$-algebra. Then $L^2(G,A)\cong L^2(G)_c\hten A$ completely isometrically.
\end{lem}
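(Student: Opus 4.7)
The plan is to build a canonical map $\Phi$ from the algebraic tensor product $C_c(G)\otimes A$ into $L^2(G,A)$ by the formula $\Phi(\xi\otimes a)(s)=\xi(s)a$, and show that after passing to completions it extends to a completely isometric isomorphism $L^2(G)_c\hten A\to L^2(G,A)$. The core point is a single explicit norm identity: for $u=\sum_{i=1}^n\xi_i\otimes a_i\in C_c(G)\otimes A$,
\begin{equation*}
\|\Phi(u)\|_{L^2(G,A)}^2=\|\langle\Phi(u),\Phi(u)\rangle\|_A=\bigg\|\sum_{i,j=1}^n\langle\xi_i,\xi_j\rangle_{L^2(G)}\,a_i^*a_j\bigg\|_A.
\end{equation*}
I will match this with the Haagerup tensor norm on $L^2(G)_c\hten A$ using a concrete operator-space realization.

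To do so, fix a faithful nondegenerate representation $A\subseteq\BH[K]$ and identify $L^2(G,K)\cong L^2(G)\ten K$. The assignment $f\mapsto T_f$, $T_f(k)(s)=f(s)k$, isometrically embeds $L^2(G,A)$ into $\mc{B}(K,L^2(G)\ten K)$, because $\|T_f(k)\|^2=\langle k,\langle f,f\rangle k\rangle$. On the other hand, by the standard Blecher--Paulsen realization, $L^2(G)_c\hten A$ sits completely isometrically in $\mc{B}(K,L^2(G)\ten K)$ via $\xi\ten a\mapsto(k\mapsto\xi\ten ak)$, and the direct computation
\begin{equation*}
\bigg\|\sum_i\xi_i\ten a_ik\bigg\|^2=\bigg\langle k,\sum_{i,j}\langle\xi_i,\xi_j\rangle\,a_i^*a_jk\bigg\rangle
\end{equation*}
reproduces exactly the right-hand side of the displayed identity above. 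Thus the natural map $\Phi$ agrees with the inclusion of $L^2(G)_c\hten A$ into this $B(K,L^2(G)\ten K)$, and so it is isometric. Complete isometry follows by repeating the argument at the $M_n$ level after the canonical identifications $M_n(L^2(G)_c\hten A)\cong L^2(G)_c\hten M_n(A)$ and $M_n(L^2(G,A))\cong L^2(G,M_n(A))$, with $M_n(A)\subseteq\mc{B}(K^n)$.

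It remains to check that $\Phi$ has dense range, i.e.\ that $C_c(G)\otimes A$ is norm-dense in $L^2(G,A)$. Since $C_c(G,A)$ is dense in $L^2(G,A)$ by definition, it suffices to approximate a given $f\in C_c(G,A)$ uniformly on $\mathrm{supp}(f)$ by elementary tensors; this is a routine partition-of-unity argument on the compact support of $f$, and a uniform approximation yields an $L^2$-approximation since the supports stay inside a fixed compact set. The only mild subtlety in the whole proof is the (well-known) identification of the Haagerup tensor norm $L^2(G)_c\hten A$ with the operator norm coming from $\mc{B}(K,L^2(G)\ten K)$; once that is in place the lemma falls out of the single computation of $\langle\Phi(u),\Phi(u)\rangle$.
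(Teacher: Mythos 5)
Your proof is correct, and at its core it rests on the same identity as the paper's: $\norm{\sum_i\xi_i\ten a_i}^2=\norm{\sum_{i,j}\la\xi_i,\xi_j\ra a_i^*a_j}$. The difference is in how each side of the isomorphism is computed and in how complete isometry is obtained. The paper expands the $\xi_k$ in an orthonormal basis $(e_i)$ of $L^2(G)$ and uses the formula $\norm{\sum_i e_i\ten b_i}_h=\norm{\sum_i b_i^*b_i}^{1/2}$ to evaluate the Haagerup norm, then gets complete isometry in one stroke by citing the fact that a surjective inner-product-preserving $A$-module map between Hilbert $C^*$-modules is completely isometric for the canonical operator space structures ([BLM, Lemma 8.3.2]). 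You instead use the (equally standard) spatial realization $L^2(G)_c\hten A\cong L^2(G)_c\ten_{\min}A\subseteq\mc{B}(K,L^2(G)\ten K)$ and compare it with the concrete embedding $f\mapsto T_f$ of the Hilbert module into the same space, then amplify by hand via $M_n(L^2(G)_c\hten A)\cong L^2(G)_c\hten M_n(A)$ and $M_n(L^2(G,A))\cong L^2(G,M_n(A))$. That is sound, with the small caveat that these two matrix identifications are precisely what pins down the canonical operator space structures on the two sides (for the Hilbert module this is the structure of [BLM, Section 8.2]), so you should say you are taking them as the definitions rather than as extra facts to verify. Your explicit check that $C_c(G)\otimes A$ is dense in $L^2(G,A)$ is a point the paper leaves implicit, and is a welcome addition. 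In short: both routes work; yours is more self-contained and concrete, the paper's is shorter because it outsources the complete isometry to a general module-map lemma.
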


\begin{proof} Let $(e_i)_{i\in I}$ be an orthonormal basis of $L^2(G)$. Given $\xi_1,...,\xi_n\in L^2(G)$ and $a_1,...,a_n\in A$, for each $i$, let $b_i=\sum_{k=1}^n \la e_i,\xi_k\ra a_k$. Then, on the one hand
$$\norm{\sum_{k=1}^n \xi_k\ten a_k}_h=\norm{\sum_{i\in I}\sum_{k=1}^n\la e_i,\xi_k\ra e_i \ten a_k}_h=\norm{\sum_{i\in I}e_i\ten b_i}_h=\norm{\sum_{i\in I}b_i^*b_i}^{1/2}.$$
On the other hand
\begin{align*}\norm{\sum_{k=1}^n \xi_k\ten a_k}_{L^2(G,A)}&=\norm{\sum_{k,l=1}^n \la\xi_k,\xi_l\ra a_k^*a_l}^{1/2}=\norm{\sum_{i\in I}\sum_{k,l=1}^n \la\xi_k,e_i\ra\la e_i,\xi_l\ra a_k^*a_l}^{1/2}\\
&=\norm{\sum_{i\in I}b_i^*b_i}^{1/2}.
\end{align*}
Thus there is an isometric isomorphism $\theta: L^2(G)_c \ten^h A \to L^2(G,A)$ acting as the identity on simple tensors. Equipping the space $L^2(G)_c \ten^h A$ with the canonical $C^*$-$A$-module structure (see \cite[Theorem 8.2.11]{BLM}), standard calculations show that $\theta$ is an $A$-module map satisfying $\theta(x \la y,z \ra) = \theta(x)\la \theta(y), \theta(z) \ra$ for all $x,y,z \in L^2(G)_c \otimes^h A$. Thus, if we equip $L^2(G,A)$ with its canonical operator space structure (see \cite[Section 8.2]{BLM}), it follows by \cite[Lemma 8.3.2]{BLM} that $\theta$ is completely isometric.
\end{proof}

Let $A$ be a $C^*$-algebra. The self-dual completion of a Hilbert $A$-module $E$ is the space $E'':=B_A(E,A^{**})$ of bounded $A$-module maps from $E$ into $A^{**}$. By \cite[Corollary 4.3]{P} (see also \cite[Proposition 2.2]{Zettl}) there is a Hilbert $A^{**}$-module structure on $E''$, whose norm coincides with the operator norm induced from $B_A(E,A^{**})$.

\begin{lem}\label{l:sdcompletion} Let $G$ be a locally compact group and $A$ be a $C^*$-algebra. The map 
$$j:L^2(G,A^{**})\ni\xi\mapsto\bigg(\eta\mapsto \la\xi,\eta\ra_{A^{**}}=\int_G \xi(s)^*\eta(s) \ ds\bigg)\in L^2(G,A)''$$
is an isometric $A^{**}$-module map.
\end{lem}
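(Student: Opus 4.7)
The plan is to verify, in order, well-definedness, the $A^{**}$-module property, and the isometry of $j$; the substantive step is the last one. For $\xi\in L^2(G,A^{**})$ and $\eta\in L^2(G,A)$ (viewed inside $L^2(G,A^{**})$ via the isometric inclusion $A\hookrightarrow A^{**}$), the integrand $\xi(s)^*\eta(s)$ lies in $A^{**}$, and Cauchy--Schwarz in the Hilbert $A^{**}$-module $L^2(G,A^{**})$ yields $\|j(\xi)(\eta)\|\le\|\xi\|\,\|\eta\|$. Since $j(\xi)$ is manifestly right $A$-linear, this shows $j(\xi)\in B_A(L^2(G,A),A^{**})=L^2(G,A)''$ with $\|j(\xi)\|_{\mathrm{op}}\le\|\xi\|$, settling the upper bound of the isometry. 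The $A^{**}$-linearity $j(\xi\cdot x)=j(\xi)\cdot x$ is then a one-line computation from $(\xi(s)x)^*=x^*\xi(s)^*$ together with the right $A^{**}$-action on $L^2(G,A)''$ supplied by \cite[Corollary 4.3]{P}.

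For the reverse inequality $\|j(\xi)\|_{\mathrm{op}}\ge\|\xi\|$ I would pass to the universal representation of $A$, so that $A\subseteq B(H)$ faithfully with $A^{**}$ identified as the normal enveloping von Neumann algebra inside $B(H)$. For $\xi\in L^2(G,A^{**})$ define $T_\xi:H\to L^2(G,H)$ by $(T_\xi h)(s)=\xi(s)h$; a direct inner-product computation gives $T_\xi^*T_\xi=\la\xi,\xi\ra\in A^{**}$, so $\|T_\xi\|_{\mathrm{op}}^2=\|\la\xi,\xi\ra\|=\|\xi\|^2$, and the same computation yields $T_\xi^*T_\eta=\la\xi,\eta\ra=j(\xi)(\eta)$ for all $\eta\in L^2(G,A)$. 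Consequently
\[\|j(\xi)\|_{\mathrm{op}}=\sup\bigl\{\|T_\xi^*T_\eta\|_{\mathrm{op}}:\eta\in L^2(G,A),\;\|\eta\|\le 1\bigr\}.\]

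The main obstacle is then a Kaplansky-type density: produce a net $(\eta_i)\subseteq L^2(G,A)$ with $\|\eta_i\|\le\|\xi\|$ and $T_{\eta_i}\to T_\xi$ in the strong operator topology. My concrete plan is to reduce by density to $\xi\in C_c(G,A^{**})$, approximate $\xi$ in $L^2$-norm by a simple step function $\xi_\ep=\sum_{k=1}^n\chi_{E_k}x_k$ on a fine measurable partition of $\supp\xi$ (available from uniform norm continuity), and then invoke Kaplansky density for the column inclusion $M_{n,1}(A)\subseteq M_{n,1}(A^{**})$ applied to the column $(|E_k|^{1/2}x_k)_k$ to obtain a net $(b_{k,i})\in M_{n,1}(A)$ with column norm at most $\|\xi_\ep\|$ and $b_{k,i}\to|E_k|^{1/2}x_k$ strongly. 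Setting $\eta_i(s):=|E_k|^{-1/2}b_{k,i}$ for $s\in E_k$ produces $\eta_i\in L^2(G,A)$ with $\|\eta_i\|\le\|\xi_\ep\|$ and $T_{\eta_i}h\to T_{\xi_\ep}h$ in $L^2(G,H)$ for every $h\in H$. A diagonal argument over $\ep\to 0$ then yields the desired net with $\limsup\|\eta_i\|\le\|\xi\|$ and $T_{\eta_i}\to T_\xi$ strongly. To finish, choose $h\in H$ of unit norm with $\la h,\la\xi,\xi\ra h\ra$ arbitrarily close to $\|\la\xi,\xi\ra\|=\|\xi\|^2$; strong-operator convergence gives $T_\xi^*T_{\eta_i}h\to\la\xi,\xi\ra h$ in norm, so $\|T_\xi^*T_{\eta_i}\|_{\mathrm{op}}$ eventually exceeds $\|\xi\|^2-\ep$, and dividing by $\|\eta_i\|\le\|\xi\|+o(1)$ gives $\|j(\xi)\|_{\mathrm{op}}\ge\|\xi\|$, completing the isometry.
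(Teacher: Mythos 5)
Your proof is correct, but it reaches the key reverse inequality $\norm{j(\xi)}_{\mathrm{op}}\geq\norm{\xi}$ by a genuinely different route than the paper. The paper's argument is operator-space theoretic: using $L^2(G,A)\cong L^2(G)_c\otimes^h A$ (Lemma \ref{l:hten}), self-duality of the Haagerup tensor product \cite[Corollary 3.4]{BS} and \cite[Theorem 5.7]{ER2}, it embeds $L^2(G)_c\otimes^h A^{**}$ completely isometrically into $(L^2(G)_c\otimes^h A)^{**}$, takes via Goldstine a net $(\xi_i)$ in the ball of radius $\norm{\xi}$ of $L^2(G,A)$ converging weak* to $\xi$, shows $\la\xi,\xi_i\ra\to\la\xi,\xi\ra$ weak* in $A^{**}$ using norm-compactness of $\{\mu\cdot\xi(s)^*\mid s\in G\}$, and concludes by weak* lower semicontinuity of the norm. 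You instead work spatially in the universal representation with the creation operators $T_\xi$, reduce to $\xi\in C_c(G,A^{**})$, approximate by step functions, and invoke Kaplansky density for the column inclusion $M_{n,1}(A)\subseteq M_{n,1}(A^{**})$ -- which is legitimate (either cut the first column out of the matrix version of the non-self-adjoint Kaplansky theorem for $M_n(A)\subseteq M_n(A)''$, noting that zeroing out all but the first column stays in $M_n(A)$ and does not increase the norm, or cite the Hilbert-module Kaplansky density \cite[Corollary 2.7]{Zettl} that the paper itself uses later. Both proofs are complete; yours is more elementary and self-contained, avoiding the Haagerup-tensor-product duality machinery at the cost of the explicit step-function and column-density construction, while the paper's is shorter once that machinery is granted and meshes with the operator-space identifications it has already set up in Lemma \ref{l:hten}.
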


\begin{proof} Fix $\xi\in C_c(G,A^{**})$. Then
\begin{align*}\norm{j(\xi)}_{L^2(G,A)''}&=\sup\{\norm{\la\xi,\eta\ra_{A^{**}}}\mid \eta\in L^2(G,A), \ \norm{\eta}\leq1\}\\
&\leq\sup\{\norm{\la\xi,\eta\ra_{A^{**}}}\mid \eta\in L^2(G,A^{**}), \ \norm{\eta}\leq 1\}\\
&=\norm{\xi}_{L^2(G,A^{**})}.
\end{align*}
For the reverse inequality, first note that by self-duality of the Haagerup tensor product \cite[Corollary 3.4]{BS}, the canonical inclusion 
$$L^2(G)_c\hten A^{**}=L^2(G)_c^{**}\hten A^{**}\hookrightarrow L^2(G)_c^{**}\whten A^{**}$$
is a complete isometry. Further, by \cite[Theorem 5.7]{ER2}, the canonical injection
$$L^2(G)_c^{**}\whten A^{**}\hookrightarrow (L^2(G)_c^*\whten A^*)^*=(L^2(G)_c\hten A)^{**}$$
is a complete isometry. Hence, $L^2(G)_c\hten A^{**}\subseteq(L^2(G)_c\hten A)^{**}$, canonically. Let $(\xi_i)$ be a net in $(L^2(G)_c\hten A)_{\norm{\cdot}\leq\norm{\xi}}$ which converges to $\xi$ in the weak* topology of  $(L^2(G)_c^*\whten A^*)^*$. Then for every $\chi\in L^2(G)$ and $\mu\in A^*$ we have
$$\la\xi,\chi\ten\mu\ra=\lim_i\int_G \la \xi_i(s),\mu\ra \overline{\chi(s)} \ ds,$$
uniformly for $\mu$ in compact subsets of $A^*$ (by Lemma \ref{l:bdd w*=ucc}). Let $\chi=\chi_{\mathrm{supp}(\xi)}\in L^2(G)$. Then for every $\mu\in A^*$, the set $\{\mu\cdot\xi(s)^*\mid s\in G\}$ is norm compact in $A^*$, so that
\begin{align*}\mu(\la\xi,\xi_i\ra_{A^{**}})&=\int_G \la\xi(s)^*\xi_i(s),\mu\ra\overline{\chi(s)} \ ds\\
&=\int_G \la\xi_i(s),\mu\cdot\xi(s)^*\ra\overline{\chi(s)} \ ds\\
&\rightarrow\int_G \la\xi(s),\mu\cdot\xi(s)^*\ra\overline{\chi(s)} \ ds\\
&=\mu(\la\xi,\xi\ra_{A^{**}}).
\end{align*}
Hence, $\la\xi,\xi_i\ra_{A^{**}}\rightarrow\la\xi,\xi\ra_{A^{**}}$ weak* in $A^{**}$, and so
\begin{align*}\norm{\la\xi,\xi\ra_{A^{**}}}&\leq\limsup_i\norm{\la\xi,\xi_i\ra_{A^{**}}}\leq\limsup_i\norm{j(\xi)}_{L^2(G,A)''}\norm{\xi_i}_{L^2(G,A)}\\
&\leq\norm{j(\xi)}_{L^2(G,A)''}\norm{\xi}_{L^2(G,A^{**})},
\end{align*}
which implies that $\norm{\xi}_{L^2(G,A^{**})}\leq\norm{j(\xi)}_{L^2(G,A)''}$.
\end{proof}

\begin{prop}\label{p:C*amen} Let $(A,G,\alpha)$ be an amenable $C^*$-dynamical system. Then there exists a net $(h_i)$ of continuous compactly supported completely positive Herz-Schur multipliers satisfying
\begin{enumerate}
\item $\norm{h_i(e)}_{cb}\leq 1$ for all $i$;
\item $h_i(e)\rightarrow\id_{A}$ in the point norm topology;
\item $\Theta(h_i)\rightarrow\id_{G\ltimes A}$ in the point norm topology.
\item $h_i(s)(a)=\la\xi_i,(1\ten 1\ten a)(\lm_s\ten 1\ten\alpha_s)\xi_i\ra_A$, for a contractive net $(\xi_i)$ in $C_c(G,\ell^2(A))$.
\end{enumerate}
When $Z(A^{**})=Z(A)^{**}$, the net $(\xi_i)$ can be chosen in $C_c(G,Z(A))$, in which case $h_i(s)(a)=a\la\xi_i,(\lm_s\ten\alpha_s)\xi_i\ra$, $s\in G$, $a\in A$.
\end{prop}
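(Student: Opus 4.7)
The plan is to combine the Reiter property for the amenable $W^*$-dynamical system $(A_\alpha'', G, \overline{\alpha})$ with a Kaplansky density argument for Hilbert $C^*$-modules, transporting Reiter vectors from $C_c(G, Z(A_\alpha'')_c)$ down to $C_c(G, \ell^2(A))$. Since $(A, G, \alpha)$ is amenable in the sense of \cite{BEW}, the universal $W^*$-system is amenable; hence by Theorem \ref{t:Reiter} (together with Remark \ref{r:Reiter}) applied to $(A_\alpha'', G, \overline{\alpha})$, I first obtain a net $(\eta_j)$ in $C_c(G, Z(A_\alpha'')_c)$ with $\la \eta_j, \eta_j \ra \leq 1$ and $\la \eta_j, \wtal_t \eta_j \ra \to 1$ weak* in $A_\alpha''$, uniformly for $t$ in compact subsets of $G$.

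Next, I would approximate each $\eta_j$ by an element of $C_c(G, \ell^2(A))$ via Kaplansky density for Hilbert modules. The key ingredients are the isometric $A^{**}$-module embedding $L^2(G, A^{**}) \hookrightarrow L^2(G, A)''$ from Lemma \ref{l:sdcompletion}; a Stinespring-type observation realizing each positive element of the unit ball of $A^{**}$ as a $\sigma$-weak limit of inner products $\la v, v \ra$ with $v$ in the unit ball of $\ell^2(A)$; and the control on inner products provided by Lemma \ref{l:Hibmod}. Combining these with a diagonal argument (and an $L^2$-norm truncation to arrange compact support), they yield a net $(\xi_i) \subseteq C_c(G, \ell^2(A))$ with $\la \xi_i, \xi_i \ra \leq 1$ satisfying the following $C^*$-Reiter type property:
\[ \|\la \xi_i, (1\ten 1\ten a)(\lm_s\ten 1\ten \alpha_s)\xi_i\ra - a\| \to 0 \]
uniformly on compact subsets of $G \times A$.

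With this net, the associated Herz-Schur multipliers $h_i(s)(a) = \la \xi_i, (1\ten 1\ten a)(\lm_s\ten 1\ten \alpha_s)\xi_i\ra$ are completely positive by the remark following Proposition \ref{p:CP}, with $\|\Theta(h_i)\|_{cb} = \|\la \xi_i, \xi_i \ra\| \leq 1$; this gives conclusions (1) and (4). Conclusion (2) is immediate from the $C^*$-Reiter property. For (3), the uniform bound $\|\Theta(h_i)\| \leq 1$ together with norm-density of $\alpha \times \lm(C_c(G, A))$ in $G \ltimes A$ reduces the verification to elements of the form $\alpha \times \lm(f)$ with $f \in C_c(G, A)$; since $\Theta(h_i)(\alpha \times \lm(f)) = \alpha \times \lm(h_i \cdot f)$ by Proposition \ref{p:CP} and $\|\alpha \times \lm(h_i \cdot f - f)\|_{G \ltimes A} \leq \|h_i \cdot f - f\|_{L^1(G, A)}$, the claim follows from the uniform-on-compacta convergence just arranged. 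When $Z(A^{**}) = Z(A)^{**}$, the Kaplansky density step simplifies: $Z(A_\alpha'')$ is a quotient of the commutative von Neumann algebra $Z(A)^{**}$, so one may approximate $\eta_j$ by $C_c(G, Z(A))$ directly, with no need for $\ell^2$, and centrality of $\xi_i$ then yields the formula $h_i(s)(a) = a \la \xi_i, (\lm_s \ten \alpha_s) \xi_i \ra$. I expect the main obstacle to be precisely this Kaplansky density step, where one must ensure that the approximation is strong enough to yield norm (rather than merely weak*) convergence of $h_i(s)(a)$ to $a$ uniformly on compact subsets of $G \times A$; this is exactly where Lemma \ref{l:Hibmod} becomes essential, as it converts control on state-level module inner products into the needed norm estimates.
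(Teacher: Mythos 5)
Your overall strategy (Reiter property for the universal $W^*$-system, then Kaplansky density for Hilbert modules via Lemma \ref{l:sdcompletion}, controlled by Lemma \ref{l:Hibmod}) is the same as the paper's, but there is a genuine gap at the decisive step. The Kaplansky density theorem for Hilbert $C^*$-modules only gives approximants $\xi_{i,j}\in C_c(G,A)$ with $\mu(\la \xi_i'-\xi_{i,j},\xi_i'-\xi_{i,j}\ra)^{1/2}\to 0$ for each positive functional $\mu\in A^*$, and Lemma \ref{l:Hibmod} is itself only a state-by-state estimate: it bounds $|\mu(\la\xi,\xi'\ra-\la\eta,\eta'\ra)|$ for a \emph{fixed} state $\mu$, with no uniformity over the state space. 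Consequently this machinery yields only \emph{weak*} convergence $\la\xi_{i,j},(1\ten a)(\lm_s\ten\alpha_s)\xi_{i,j}\ra\to a$ (and, after a further nontrivial argument identifying $A(G\ltimes_f zA)$ with $(G\bar{\ltimes}A_\alpha'')_*$ via Eymard's lemma, point-\emph{weak} convergence $\Theta(h_{i,j})\to\id_{G\ltimes A}$). Your claim that Lemma \ref{l:Hibmod} ``converts control on state-level module inner products into the needed norm estimates'' is exactly what fails; the norm-convergence $C^*$-Reiter property you assert cannot be extracted directly from the Kaplansky step.

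The paper closes this gap with a convexity argument: having only point-weak convergence of $h_{i,j}(e)(a)-a$ and $\Theta(h_{i,j})(x)-x$, one applies Hahn--Banach/Mazur in a finite direct sum of the spaces $(A,w)$ and $(G\ltimes A,w)$ to find convex combinations $h_i\in\mathrm{conv}\{h_{i,j}\}$ for which the convergence is in norm. This is also precisely where $\ell^2(A)$ enters: a convex combination $\sum_k\lm_k\la\xi_{i_k,j_k},(1\ten a)(\lm_s\ten\alpha_s)\xi_{i_k,j_k}\ra$ is implemented by the single vector $\oplus_k\sqrt{\lm_k}\,\xi_{i_k,j_k}\in C_c(G,\ell^2(A))$ --- not, as you suggest, by a Stinespring-type realization of positive elements of $A^{**}$ as limits of inner products from $\ell^2(A)$. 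Your reduction of conclusion (3) to $f\in C_c(G,A)$ via $\norm{\alpha\times\lm(h_i\cdot f-f)}\leq\norm{h_i\cdot f-f}_{L^1(G,A)}$ is fine once norm convergence is in hand, but without the convexity step (and the Fourier-algebra argument needed even for the point-weak convergence of $\Theta(h_{i,j})$ on $G\ltimes A$) the proof is incomplete.
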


\begin{proof} By Theorem \ref{t:Reiter}, amenability of $(A,G,\alpha)$ implies the existence  a net $(\xi_i)$ in $C_c(G,Z(A_\alpha'')_c)$ whose corresponding positive type functions $h_i(s)=\la\xi_i,(\lm_s\ten\alpha_s)\xi_i\ra$ satisfy $h_i(e)=\la\xi_i,\xi_i\ra=1$ for all $i$, $\lim_i h_i(s)=1$ weak*, uniformly on compact subsets. 

Pick $\eta\in C_c(G)_{\norm{\cdot}_2=1}$ and let $\xi_i'=(1\ten z)\xi_i+\eta\ten (1-z)\in C_c(G,Z(A^{**}))$. Then
$$\la\xi_i',\xi_i'\ra=\int_G z\xi_i(s)^*\xi(s)+|\eta(s)|^2(1-z) \ ds=z\la\xi_i,\xi_i\ra+\norm{\eta}^2(1-z)=1.$$ 
By Lemma \ref{l:sdcompletion}, $(j(\xi'_i))$ is a net in the unit ball of $L^2(G,A)''$. By the Kaplanksy density theorem for Hilbert $C^*$-modules \cite[Corollary 2.7]{Zettl}, for each $i$, there exists a net $(\xi_{i,j})$ in $C_c(G,A)_{\norm{\cdot}_{L^2(G,A)}\leq 1}$ such that 
$$\mu(\la j(\xi'_i)-j(\xi_{i,j}),j(\xi'_i)-j(\xi_{i,j})\ra_{L^2(G,A)''} )^{1/2}=\mu(\la \xi'_i-\xi_{i,j},\xi'_i-\xi_{i,j}\ra_{A^{**}} )^{1/2}\rightarrow 0, \ \ \ \mu\in (A^*)^+,$$
where the first equality uses that $j$ is an isometric $A^{**}$-module map (Lemma \ref{l:sdcompletion}). We now observe two consequences of this approximation which will be combined into a single convexity argument to yield the desired properties (2)-(4) (property (1) being automatic).

First, for any state $\mu\in A^*$, applying Lemma \ref{l:Hibmod} to the inner product $A^{**}$-module $E=C_c(G,A^{**})$, we have
\begin{align*}\mu(1-\la\xi_{i,j},\xi_{i,j}\ra)&=\mu(\la\xi'_{i},\xi'_{i}\ra-\la\xi_{i,j},\xi_{i,j}\ra)\\
&\leq\norm{\la\xi_i',\xi_i'\ra}\mu(\la\xi_i'-\xi_{i,j},\xi_i'-\xi_{i,j}\ra)^{1/2}+\norm{\la\xi_{i,j},\xi_{i,j}\ra}\mu(\la\xi_i'-\xi_{i,j},\xi_i'-\xi_{i,j}\ra)^{1/2}\\
&\leq 2\mu(\la\xi_i'-\xi_{i,j},\xi_i'-\xi_{i,j}\ra)^{1/2}\\
&\xrightarrow{j}0.
\end{align*}
Thus, $\la\xi_{i,j},\xi_{i,j}\ra\rightarrow 1$ weak* in $A^{**}$, where we are considering the doubly-indexed net as in \cite[pg.\ 69]{Kelley}. Then for each $i$ and any state $\mu\in A^*$, 
$$\mu(\la (1\ten a)(\xi_{i,j}-\xi_i'),(1\ten a)(\xi_{i,j}-\xi_i')\ra)^{1/2}\leq\norm{a}\mu(\la\xi_{i,j}-\xi_i',\xi_{i,j}-\xi_i'\ra)^{1/2}\xrightarrow{j}0$$
and, similarly,
$$\mu(\la(\xi_{i,j}-\xi_i')(1\ten a),(\xi_{i,j}-\xi_i')(1\ten a)\ra)^{1/2}=(a\cdot\mu\cdot a^*)(\la\xi_{i,j}-\xi_i',\xi_{i,j}-\xi_i'\ra)^{1/2}\xrightarrow{j}0.$$
In addition, as $\xi_i'$ takes values in $Z(A^{**})$, we have $(1\ten a)\xi'_i=\xi'_i(1\ten a)$ for each $i$ and each $a\in A$. Using this, and applying similar estimates from the proof of Lemma \ref{l:Hibmod}, for any state $\mu$ and $a\in A$,
\begin{align*}&\mu(\la\xi_{i,j},(1\ten a^*)\xi_{i,j}\ra-a^*\la\xi_{i,j},\xi_{i,j}\ra)\\
&=\mu(\la(1\ten a)\xi_{i,j},\xi_{i,j}\ra-\la\xi_{i,j}(1\ten a),\xi_{i,j}\ra)\\
&=\mu(\la(1\ten a)(\xi_{i,j}-\xi_i'),\xi_{i,j}\ra+\la(\xi_i'-\xi_{i,j})(1\ten a),\xi_{i,j}\ra)\\
&\leq\norm{\la\xi_{i,j},\xi_{i,j}\ra}^{1/2}\mu(\la (1\ten a)(\xi_{i,j}-\xi_i'),(1\ten a)(\xi_{i,j}-\xi_i')\ra)^{1/2}\\
& \ \ \ \ \ \ \ \ +\norm{\la\xi_{i,j},\xi_{i,j}\ra}^{1/2}\mu(\la(\xi_{i,j}-\xi_i')(1\ten a),(\xi_{i,j}-\xi_i')(1\ten a)\ra)^{1/2}\\
&\rightarrow0,
\end{align*}
Thus, $\la\xi_{i,j},(1\ten a)\xi_{i,j}\ra-a\la\xi_{i,j},\xi_{i,j}\ra\rightarrow0$ weak* in $A^{**}$, and it follows that $\la\xi_{i,j},(1\ten a)\xi_{i,j}\ra\rightarrow a$ weak* in $A^{**}$ for each $a\in A$.

Second, since $(1\ten z)\xi_i'$ is equal to the original $\xi_i\in C_c(G,Z(A_\alpha'')_c)$, for any $\mu\in (A_\alpha'')_*^+=z(A^*)^+$, we have 
$$\mu(\la\xi_i-z\xi_{i,j},\xi_i-z\xi_{i,j}\ra_{A''_\alpha})^{1/2}= \mu(\la \xi_i'-\xi_{i,j}, \xi_i'-\xi_{i,j} \ra_{A^{**}})^{1/2}\rightarrow 0,$$
where $z\xi_{i,j}$ is shorthand for $(1\ten z)\xi_{i,j}$. Fix a state $\mu\in (A_\alpha'')_*^+$, $a\in A$ and let $\eta_i=(1\ten a)^*\xi_i$ and $\eta_{i,j}=(1\ten a)^*z\xi_{i,j}$.  Then, by Lemma \ref{l:Hibmod} applied to the inner product $A''_\alpha$-module $E = C_c(G, A''_\alpha)$,
\begin{align*}&|\mu(\la\xi_i,(1\ten a)(\lm_t\ten\overline{\alpha}_t)\xi_{i}\ra)-\mu(\la z\xi_{i,j},(1\ten a)(\lm_t\ten\overline{\alpha}_t)z\xi_{i,j}\ra)|\\
&=|\mu(\la\eta_i,(\lm_t\ten\overline{\alpha}_t)\xi_{i}\ra)-\mu(\la \eta_{i,j},(\lm_t\ten\overline{\alpha}_t)z\xi_{i,j}\ra)|\\
&\leq\norm{\la\eta_i,\eta_i\ra}^{1/2}\mu\circ\overline{\alpha}_t(\la z\xi_{i,j}-\xi_i,z\xi_{i,j}-\xi_i\ra)^{1/2}\\
& \ \ \ \ \ \ +\norm{\la z\xi_{i,j},z\xi_{i,j}\ra}^{1/2}\mu(\la \eta_{i,j}-\eta_i,\eta_{i,j}-\eta_i\ra)^{1/2}\\
&=\norm{\la(1\ten a)^*\xi_i,(1\ten a)^*\xi_i\ra}^{1/2}\mu\circ\overline{\alpha}_t(\la z\xi_{i,j}-\xi_i,z\xi_{i,j}-\xi_i\ra)^{1/2}\\
& \ \ \ \ \ \ +\mu(\la (1\ten a)^*(z\xi_{i,j}-\xi_i),(1\ten a)^*(z\xi_{i,j}-\xi_i)\ra)^{1/2}\\
&\leq\norm{a}\mu\circ\overline{\alpha}_t(\la z\xi_{i,j}-\xi_i,z\xi_{i,j}-\xi_i\ra)^{1/2}\\
& \ \ \ \ \ \ +\norm{a}\mu(\la z\xi_{i,j}-\xi_i,z\xi_{i,j}-\xi_i\ra)^{1/2}.
\end{align*}
Once again using the norm continuity of the predual action $G\acts (A_\alpha'')_*$ and Lemma \ref{l:bdd w*=ucc}, the above estimates imply that
$$|\mu(\la\xi_i,(1\ten a)(\lm_t\ten\overline{\alpha}_t)\xi_{i}\ra)-\mu(\la z\xi_{i,j},(1\ten a)(\lm_t\ten\overline{\alpha}_t)z\xi_{i,j}\ra)|\xrightarrow{j}0, \ \ \ \mu \in (A_\alpha'')_*,$$
uniformly for $t$ in compact subsets of $G$, and $a$ in bounded subsets of $A$. Putting $h_{i,j}(t)(a)=\la \xi_{i,j},(1\ten a)(\lm_t\ten\alpha_t)\xi_{i,j}\ra$, we obtain a net $(h_{i,j})$ of compactly supported completely positive Herz-Schur multipliers satisfying $\norm{h_{i,j}(e)}_{cb}\leq 1$ and (recalling that each $\xi_i$ takes central values),
\begin{equation*}|\mu(zh_{i,j}(t)(za))-\mu(za)|\leq|\mu(zh_{i,j}(t)(za))-\mu(h_i(t)za)|+|\mu(h_i(t)za)-\mu(za)|\xrightarrow{i,j}0\end{equation*}
for any $\mu\in (A_\alpha'')_*$, uniformly for $(t,a)$ in compact subsets of $G\times A$ (the uniformity on compacta in $A$ coming from the convergence of the second term above).

Fix $f\in C_c(G,zA)$. By \cite[Lemme 3.2]{E} there exists a linear combination $v\in A(G)$ of positive definite functions in $C_c(G)$ such that $v\equiv 1$ on $\mathrm{supp}(f)$. It follows that
$$v\cdot(\alpha\times\lm)(f)=(\alpha\times\lm)(v\cdot f)=(\alpha\times\lm)(f),$$
where $\cdot$ is the canonical action of $A(G)$ on $G\ltimes zA$ via the dual co-action. Given $u\in (G\ltimes zA)^*\subseteq B(G\ltimes_f zA)$, by \cite[Corollary 7.6.9]{Ped}, $v\cdot u$ is a linear combination of compactly supported positive definite functions in $B(G\ltimes_f zA)$. Hence, by \cite[Lemma 7.7.6]{Ped}, 
$$v\cdot u\in A(G\ltimes_f zA)=(G\ltimes zA)''_*\cong (G\bar{\ltimes} A_\alpha'')_*.$$
Then $\{v(s)u(s)\mid s\in G\}$ is a norm compact subset of $(A_\alpha'')_*$, so boundedness of $\norm{h_{i,j}(s)}$, the identification $A(G\ltimes_f zA)=(G\bar{\ltimes} A_\alpha'')_*$ and the weak* convergence $zh_{i,j}(s)(za)\rightarrow za$ imply that 
\begin{align*}\la u,\Theta(zh_{i,j})(\alpha\times\lm(f))\ra&=\la v\cdot u,\Theta(zh_{i,j})(\alpha\times\lm(f))\ra\\
&=\int_G\la v(s)u(s),zh_{i,j}(s)(f(s))\ra \ ds\\
&\rightarrow\int_G\la v(s)u(s),f(s)\ra \ ds\\
&=\la u, (\alpha\times\lm)(f)\ra.
\end{align*}
By boundedness of $(\Theta(zh_{i,j}))$, it follows that $\Theta(zh_{i,j})\rightarrow\id_{G\ltimes zA}$ in the point weak topology. Identifying $A$ with $zA\subseteq A_\alpha''$, as well as the $C^*$-dynamical systems $(A,G,\alpha)\cong (zA,G,\overline{\alpha})$, it follows that $\Theta(h_{i,j})\rightarrow\id_{G\ltimes A}$ in the point weak topology.

Now for every $a_1,...,a_n\in A$, $x_1,...,x_m\in G\ltimes A$, consider the convex set
$$C=\{(h(e)(a_1)-a_1,...,h(e)(a_n)-a_n,\Theta(h)(x_1)-x_1,...,\Theta(h)(x_m)-x_m)\mid h\in\mathrm{conv}\{h_{i,j}\}\},$$
viewed inside the locally convex Hausdorff space 
$$(A,w)\oplus\cdots\oplus(A,w)\oplus (G\ltimes A,w)\oplus\cdots\oplus(G\ltimes A,w),$$
where $w$ denotes the weak topology. By the above analysis, $0$ belongs to the closure of $C$. The standard convexity argument then shows that $0$ belongs to the closure of $C$ where all summands are equipped with the norm topology. It follows that there exists a net $(h_i)$ of continuous compactly supported completely positive Herz-Schur multipliers $h_i:G\rightarrow\mc{CB}(A)$ satisfying properties (1)-(3), and each $h_i\in \mathrm{conv}\{h_{i,j}\}$. To see that (4) holds, use $h_i \in \mathrm{conv}\{h_{i,j}\}$ to write each $h_i$ as
\begin{align*}h_i(s)(a)&=\sum_{k=1}^{n_i}\lm_k\la\xi_{i_k,j_k},(1\ten a)(\lm_s\ten\alpha_s)\xi_{i_k,j_k}\ra\\
&=\la\oplus_{k=1}^{n_i}\sqrt{\lm_k}\xi_{i_k,j_k},(1\ten 1\ten a)(\lm_s\ten 1\ten\alpha_s)(\oplus_{k=1}^{n_i}\sqrt{\lm_k}\xi_{i_k,j_k})\ra,
\end{align*}
where $\xi_i:=\oplus_{k=1}^{n_i}\sqrt{\lm_k}\xi_{i_k,j_k}\in \oplus_{k=1}^{n_i}C_c(G,A)$ lies in the unit ball of the Hilbert $A$-module $L^2(G,\ell^2(A))$.

Finally, when $Z(A^{**})=Z(A)^{**}$, inspection of the proof shows that the $\xi_{i,j}$ from the Kaplansky density argument can be taken in $C_c(G,Z(A))$. In this case, $h_{i,j}(s)(a)=ak_{i,j}(s)$, where $k_{i,j}(s)=\la \xi_{i,j},(\lm_s\ten\alpha_s)\xi_{i,j}\ra$ is a continuous  compactly supported function $G\rightarrow Z(A)$ of positive type. It follows that the $h_i$ from the final convexity argument satisfy $h_i(s)(a)=ak_i(s)$ for some continuous compactly supported function $k_i:G\rightarrow Z(A)$ of positive type, which, by \cite[Proposition 2.5]{AD87} is necessarily of the form $k_i(s)=\la\xi_i,(\lm_s\ten\alpha_s)\xi_i\ra$ for some contractive net $(\xi_i)\subseteq L^2(G,Z(A))$. The norm density of $C_c(G,Z(A))$ inside $L^2(G,Z(A))$ then yields the claim. 
\end{proof}

\begin{remark} Contrary to the well-known group case ($A=\bC$), it is not clear whether every continuous completely positive Herz-Schur multiplier $h:G\rightarrow\mc{CB}(A)$ of compact support is necessarily of the form $h_i(s)(a)=\la\xi,(1\ten a)(\lm_s\ten\alpha_s)\xi\ra$ for some $\xi\in L^2(G,A)$. Indeed, this was already asked for discrete dynamical systems in \cite[Remark 4.29]{BC2}. If this were true, then the net $(\xi_i)$ in the conclusion of Proposition \ref{p:C*amen} can be taken in $C_c(G,A)$, and it would follow from the proof of Theorem \ref{t:C*amen} (see below) that amenability is equivalent to the 1-positive approximation property for arbitrary $(A,G,\alpha)$.
\end{remark}

The following lemmas will be used to establish Theorem \ref{t:posdef}, which, as a corollary, entails the equivalence of conditions (2) and (3) of Theorem \ref{t:C*amen}. The first is standard, and the second is surely known, but we include proofs for completeness.

\begin{lem}\label{l:posdef} Let $G$ be a locally compact group, and $f\in C_c(G)$. Then $\lm(f)\geq0$ if and only if $\Delta^{1/2}f$ is positive definite.
\end{lem}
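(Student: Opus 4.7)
The plan is to recast both conditions as positivity statements about integrals against $f$, and then exhibit an involutive isometry of $C_c(G)$ that identifies them. First, for $g\in C_c(G)$, a routine Fubini-type calculation gives
\[
\langle g,\lambda(f)g\rangle_{L^2(G)} \;=\; \int_G f(r)\,\phi_g(r)\,dr, \qquad \phi_g(r):=\int_G\overline{g(s)}\,g(r^{-1}s)\,ds,
\]
so that by density of $C_c(G)$ in $L^2(G)$, $\lambda(f)\ge 0$ iff $\int_G f(r)\phi_g(r)\,dr\ge 0$ for every $g\in C_c(G)$.

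Second, set $h:=\Delta^{1/2}f\in C_c(G)$. Since $h$ is continuous, positive definiteness of $h$ is equivalent to the integrated form
\[
Q_h(k) \;:=\; \int_G\!\int_G \overline{k(s)}\,k(t)\,h(s^{-1}t)\,ds\,dt \;\ge\; 0, \qquad k\in C_c(G).
\]
Substituting $t=sr$ in the double integral and using $h(r)=\Delta(r)^{1/2}f(r)$, this rewrites as
\[
Q_h(k)\;=\;\int_G f(r)\,\Delta(r)^{1/2}\,\psi_k(r)\,dr, \qquad \psi_k(r):=\int_G\overline{k(s)}\,k(sr)\,ds.
\]

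Third, I will introduce the involutive isometry $T:L^2(G)\to L^2(G)$ defined by $(Tg)(s)=g(s^{-1})\Delta(s^{-1})^{1/2}$; it restricts to a bijection of $C_c(G)$, and $T^2=\mathrm{id}$ follows from $\Delta(s^{-1})\Delta(s)=1$. A direct computation---substituting $u=s^{-1}$ in the inner integral defining $\psi_{Tg}$, with the measure change $ds\mapsto\Delta(u^{-1})\,du$---yields the key identity
\[
\Delta(r)^{1/2}\,\psi_{Tg}(r) \;=\; \phi_g(r), \qquad r\in G,
\]
and hence $\langle g,\lambda(f)g\rangle = Q_h(Tg)$ for every $g\in C_c(G)$.

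The equivalence now follows immediately: $\lambda(f)\ge 0$ iff $Q_h(Tg)\ge 0$ for every $g\in C_c(G)$, which by bijectivity of $T$ is equivalent to $Q_h(k)\ge 0$ for every $k\in C_c(G)$, i.e.\ to positive definiteness of $h=\Delta^{1/2}f$. The only care required throughout is careful bookkeeping of the modular function through the various changes of variables (three substitutions in total); no conceptual obstacle is expected.
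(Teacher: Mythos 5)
Your proof is correct and is essentially the paper's argument: the paper's one\nobreakdash-line proof rests on the identity $\la \Delta^{1/2} f, g^* \ast g \ra = \la \lm(f) (\Delta^{1/2}g)^{\vee}, (\Delta^{1/2}g)^{\vee} \ra$, and your operator $T$ is exactly the map $g\mapsto(\Delta^{1/2}g)^{\vee}$, with your key identity $\Delta(r)^{1/2}\psi_{Tg}(r)=\phi_g(r)$ being an unwound form of that same identity. You have simply written out the modular-function bookkeeping that the paper leaves implicit.
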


\begin{proof}
	This follows from the identity $\la \Delta^{1/2} f, g^* \ast g \ra = \la \lm(f) (\Delta^{1/2}g)^{\vee}, (\Delta^{1/2}g)^{\vee} \ra$ for $f,g \in C_c(G)$, where the former pairing is the dual pairing $(B(G),C^*(G))$, the latter is the inner product on $L^2(G)$, and $(\Delta^{1/2}g)^\vee(t) = \sqrt{\Delta(t^{-1})}g(t^{-1})$ for $t \in G$.
\end{proof}

\begin{lem}\label{l:dense} Let $(A,G,\alpha)$ be a $C^*$-dynamical system. Then $\mathrm{span}\{f^*\star f\mid f\in C_c(G,A)\}$ is norm dense in $C_0(G,A)$.
\end{lem}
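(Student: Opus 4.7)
My plan is to reduce density in $C_0(G,A)$ to approximating each $g \in C_c(G,A)$ (a norm dense subspace) uniformly by expressions of the form $f^* \star g$ with $f \in C_c(G,A)$. The polarization identity
$$4(f^* \star g) = \sum_{k=0}^{3} i^{-k} (f + i^k g)^* \star (f + i^k g)$$
writes each such $f^* \star g$ as a complex linear combination of elements $h^* \star h$ with $h \in C_c(G,A)$, so this reduction suffices.

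A direct calculation using the substitution $u = t^{-1}$, together with $dt = \Delta(u^{-1})\, du$ under inversion of the left Haar measure, yields
$$(f^* \star g)(s) = \int_G \alpha_{u^{-1}}\bigl(f(u)^* g(us)\bigr) \, du.$$
I will take $f_{\lambda,\mu}(u) = \phi_\mu(u)\, e_\lambda$, where $(e_\lambda)$ is a contractive approximate identity of $A$ and $(\phi_\mu) \subset C_c(G)^+$ is a net supported in a shrinking neighborhood basis of $e$ with $\int_G \phi_\mu\, du = 1$. The resulting integrand becomes
$$(f_{\lambda,\mu}^* \star g)(s) = \int_G \phi_\mu(u)\, \alpha_{u^{-1}}\bigl(e_\lambda g(us)\bigr) \, du,$$
which I will compare to $e_\lambda g(s)$.

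The decisive estimate, with $K = \supp(g)$ and a fixed compact neighborhood $V$ of $e$ eventually containing $\supp(\phi_\mu)$, is
$$\bigl\|(f_{\lambda,\mu}^* \star g)(s) - e_\lambda g(s)\bigr\| \leq \int_V \phi_\mu(u)\, \bigl\|\alpha_{u^{-1}}(e_\lambda g(us)) - e_\lambda g(s)\bigr\|\, du.$$
The integrand is jointly continuous in $(u,s)$, vanishes identically on $\{e\} \times G$, and both terms vanish for $s \notin VK$; on the compact set $V \times VK$ uniform continuity gives that the supremum over $s$ tends to $0$ as $\supp(\phi_\mu) \to \{e\}$. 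Hence $\|f_{\lambda,\mu}^* \star g - e_\lambda g\|_\infty \to 0$ for each fixed $\lambda$. Separately, compactness of $g(K)$ in $A$ together with the approximate identity property yields $\|e_\lambda g - g\|_\infty \to 0$. A standard two-step choice ($\lambda$ first, then $\mu$) then produces $f_{\lambda,\mu}^* \star g \to g$ in $C_0(G,A)$. The main (minor) obstacle is the uniform-in-$s$ continuity bound, which is a routine compactness argument once the supports of the $\phi_\mu$ have been restricted to $V$.
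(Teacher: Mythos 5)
Your proof is correct and follows essentially the same route as the paper's: a polarization identity reduces the problem to approximating each $g \in C_c(G,A)$ by convolutions against an approximate identity of the form (bump function concentrated at $e$) tensored with a contractive approximate identity of $A$, followed by a uniform-on-compacta continuity estimate. The only cosmetic differences are that the paper first reduces to elementary tensors $g \otimes a \in C_c(G)\otimes A$ and uses the bai $f_{i,j}(s)=f_i(s)\alpha_s(a_j)$ of the convolution algebra $L^1(G,A)$ rather than treating general $g\in C_c(G,A)$ directly, and that your support bound should read $s \in V^{-1}K$ (or take $V$ symmetric), which changes nothing.
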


\begin{proof} Let $(f_i)$ be a bai for $\LO$ consisting of states in $C_c(G)$ whose support goes to $\{e\}$. Let $(a_j)$ be a bai for $A$, and let $f_{i,j}\in C_c(G,A)$ be $f_{i,j}(s)=f_i(s)\alpha_s(a_j)$. Then $(f_{i,j})$ is a bai for the convolution algebra $L^1(G,A)$ (see, e.g., \cite[Proposition 16.4.3]{Ren}). By density of $C_c(G)\ten A$ in $C_0(G,A)$ and a simple polarization argument, it suffices to show that $f_{i,j}\star (g\ten a)\rightarrow (g\ten a)$ uniformly in $C_0(G,A)$ for all $g\in C_c(G)$ and $a\in A$. 

First, $g\in C_c(G)$ is uniformly continuous, so 
\begin{equation}\label{e:l1}f_i\ast g\rightarrow g\end{equation}
uniformly, where $\ast$ denotes convolution in $L^1(G)$. Second, by norm continuity of $\alpha_t(a)$ at the identity, the standard argument shows that
\begin{equation}\label{e:l2}\int_G f_i(t)\norm{\alpha_t(a)-a} \ dt\rightarrow0.\end{equation}
Then (\ref{e:l1}) and (\ref{e:l2}) together with the fact that $a_ja\rightarrow a$ imply
\begin{align*}\norm{f_{i,j}\star(g\ten a)(s)-g\ten a(s)}&=\norm{\int_G f_{i}(t)(g(t^{-1}s)\alpha_t(a_ja) - g(s)a) \ dt}\\
&\leq\int_G|f_i(t)g(t^{-1}s)|\norm{\alpha_t(a_ja)-\alpha_t(a)} \ dt\\
& \ \ \ \ \ + \int_G|f_i(t)g(t^{-1}s)|\norm{\alpha_t(a)-a} \ dt\\
& \ \ \ \ \ + \bigg|\int_G f_i(t)(g(t^{-1}s)-g(s)) \ dt\bigg|\norm{a}\\
&\leq\norm{f_i}_1\norm{g}_\infty\norm{a_ja - a}\\
& \ \ \ \ \ + \norm{g}_\infty\int_G f_i(t)\norm{\alpha_t(a)-a} \ dt\\
& \ \ \ \ \ + \norm{a}|f_i\ast g-g|(s)\\
&\xrightarrow{i,j}0
\end{align*}
uniformly in $s$. Thus, $f_{i,j}\star (g\ten a)\rightarrow (g\ten a)$ uniformly in $C_0(G,A)$, and the claim is verified.
\end{proof}

\begin{thm}\label{t:posdef} Let $(A,G,\alpha)$ be a $C^*$-dynamical system and let $(h_i)$ be a bounded net of continuous, compactly supported, completely positive Herz-Schur multipliers. The following conditions are equivalent.
\begin{enumerate}
\item $\norm{h_i(s)(f(s))-f(s)}\rightarrow 0$ for every $f\in C_c(G,A)$, uniformly for $s$ in compact subsets of $G$;
\item $h_i(e)\rightarrow\id_A$ and $\Theta(h_i)\rightarrow\id_{G\ltimes A}$ in the respective point norm topologies.
\end{enumerate}
\end{thm}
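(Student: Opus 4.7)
The direction $(1)\Rightarrow(2)$ is routine. For $h_i(e)\to\id_A$ in point norm, fix $a\in A$ and pick $\phi\in C_c(G)$ with $\phi(e)=1$; applying $(1)$ to $f=\phi\cdot a$ and $K=\{e\}$ yields $\|h_i(e)(a)-a\|\to 0$. For $\Theta(h_i)\to\id_{G\ltimes A}$ in point norm, Proposition \ref{p:CP} gives $\|\Theta(h_i)\|_{cb}=\|h_i(e)\|_{cb}$, so the net is uniformly bounded and by density it suffices to check convergence on $(\alpha\times\lm)(C_c(G,A))$. For $f\in C_c(G,A)$, the element $g=h_i\cdot f-f\in C_c(G,A)$ is supported in $\supp(f)$ independently of $i$, and the standard bound $\|(\alpha\times\lm)(g)\|_{G\ltimes A}\leq\|g\|_{L^1(G,A)}$, combined with the uniform convergence from $(1)$ over the compact support of $f$, yields $\|g\|_{L^1}\to 0$.

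The direction $(2)\Rightarrow(1)$ is the substantial one. First, using Lemma \ref{l:dense}, reduce to $f$ of the form $g^*\star g$ with $g\in C_c(G,A)$: such elements span a uniformly dense subspace of $C_0(G,A)$, and since $\sup_{s,i}\|h_i(s)\|_{cb}<\infty$ and $f$ is compactly supported, a uniform approximation of $f$ by linear combinations of $g^*\star g$'s passes through the statement of $(1)$. For $f=g^*\star g$ we have $(\alpha\times\lm)(f)=(\alpha\times\lm)(g)^*(\alpha\times\lm)(g)\geq 0$ in $G\ltimes A$, and $(\alpha\times\lm)(h_i\cdot f)\geq 0$ by complete positivity of $\Theta(h_i)$, while $(2)$ gives $\|(\alpha\times\lm)(h_i\cdot f-f)\|_{G\ltimes A}\to 0$. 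To convert this reduced-crossed-product norm convergence into uniform pointwise control, I plan to exploit the positive type dilation of each CP Herz-Schur multiplier, $h_i(s)(a)=\la\xi_i,\rho_i(a)u^i_s\xi_i\ra$ on a Hilbert $A$-module $E_i$ with equivariant representation $(\rho_i,u^i)$. A Cauchy-Schwarz estimate in $E_i$ then yields
\[\|h_i(s)(a)-a\|\leq\|h_i(e)(aa^*)\|^{1/2}\|(u^i_s-1)\xi_i\|+\|h_i(e)(a)-a\|,\]
with $\|(u^i_s-1)\xi_i\|^2=\|\la\xi_i,\xi_i\ra+\alpha_s(\la\xi_i,\xi_i\ra)-\la\xi_i,u^i_s\xi_i\ra-\la u^i_s\xi_i,\xi_i\ra\|$; this reduces the problem to uniform-on-compacta convergence of the $A$-valued positive definite function $s\mapsto\la\xi_i,u^i_s\xi_i\ra$ to the unit.

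Pointwise convergence of $\la\xi_i,u^i_s\xi_i\ra$ at each fixed $s$ should follow from testing $\Theta(h_i)$ against mollified approximants in $C_c(G,A)$ to the multiplier $\alpha(1)(\lm_s\ten 1)\in M(G\ltimes A)$, combined with matrix coefficients of the Hilbert module representation of $G\ltimes A$ on $L^2(G,A)$; the upgrade from pointwise to uniform convergence on compacta should come from the built-in equicontinuity of positive definite $A$-valued functions, in conjunction with Lemma \ref{l:posdef} which bridges positivity in the reduced crossed product and positive definiteness of the associated $A$-valued function. The hardest part will be precisely this last step: extracting pointwise information about the positive definite function $\la\xi_i,u^i_s\xi_i\ra$ from the reduced-crossed-product norm convergence, where the compact support of each $h_i$ and its CP / positive type structure will be essential.
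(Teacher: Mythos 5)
Your direction $(1)\Rightarrow(2)$ is essentially the paper's argument and is fine (though note that Proposition \ref{p:CP} computes $\norm{\Theta(h)}_{cb}=\norm{h(e)}$ only for multipliers of the special form $h_\xi$; for the general bounded net in the hypothesis one simply uses the assumed boundedness together with the $L^1$-estimate). The problems are in $(2)\Rightarrow(1)$, and there are two.

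First, your plan hinges on dilating each $h_i$ as $h_i(s)(a)=\la\xi_i,\rho_i(a)u^i_s\xi_i\ra$ for an equivariant representation $(\rho_i,u^i)$ on a Hilbert $A$-module $E_i$. The hypothesis of the theorem only says that $h_i$ is a completely positive Herz--Schur multiplier in the sense of \cite{MTT}, i.e.\ that $\Theta(h_i)$ extends to a completely positive map on $G\ltimes A$; no such dilation is part of the data. For discrete systems a dilation of this kind is available (see \cite[Theorem 4.8]{BC}, \cite[Theorem 2.8]{MSTT}), but for locally compact $G$ its existence is precisely the open problem flagged in the remark following Proposition \ref{p:C*amen} (and in \cite[Remark 4.29]{BC2}). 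You are therefore assuming structure that the hypotheses do not provide, and the Cauchy--Schwarz reduction to $\norm{(u^i_s-1)\xi_i}$ collapses with it.

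Second, even granting the dilation, the step you yourself single out as the hardest --- converting norm convergence of $(\alpha\times\lm)(h_i\cdot f)$ in $G\ltimes A$ into uniform-on-compacta convergence of the positive definite data --- is where the entire content of the theorem lies, and ``equicontinuity of positive definite functions'' does not supply it: convergence tested against $C_c(G,A)$-approximants gives averaged (weak-type) information and no pointwise control at a fixed $s\neq e$. The paper's mechanism is to conjugate by the standard implementation $U$ of the universal system, slice by vector states $\om_\eta$ to produce scalar functions $v_{i,f,\eta}(s)=\la\eta,h_i(s)(f(s))u_s\eta\ra$ with $\lm(v_{i,f,\eta})\geq 0$ for $f=f_0^*\star f_0$, so that $\Delta^{1/2}v_{i,f,\eta}$ is positive definite by Lemma \ref{l:posdef}; one then verifies weak* convergence in $B(G)=C^*(G)^*$ \emph{together with} convergence of the $B(G)$-norms (this is exactly where $h_i(e)\to\id_A$ is used), and invokes the Granirer--Leinert theorem \cite[Theorem A]{GL}, which says that on positive definite functions these two convergences force convergence in the multiplier topology, hence uniformly on compacta; polarization then recovers the operator-norm statement. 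Without this (or an equivalent Raikov-type argument) your outline does not close.
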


\begin{proof} $(1)\Rightarrow(2)$: First, pick $g\in C_c(G)$ with $g(e)=1$. Given $a\in A$, applying condition (1) to $f=g\ten a$ at $s=e$ implies that $\norm{h_i(e)(a)-a}\rightarrow0$. 

Second, we have
$$\norm{\alpha(h_i(s)(f(s)))(\lm_s\ten 1)-\alpha(f(s))(\lm_s\ten 1)}=\norm{\alpha(h_i(s)(f(s))-f(s))}=\norm{h_i(s)(f(s))-f(s)}\rightarrow 0$$
for every $f\in C_c(G,A)$, uniformly for $s$ in compact subsets of $G$. Hence, by definition of $\Theta(h_i)$ we have
$$\norm{\Theta(h_i)((\alpha\times\lm(f)))-(\alpha\times\lm(f))}\leq\int_{\mathrm{supp(f)}}\norm{\alpha(h_i(s)(f(s)))(\lm_s\ten 1)-\alpha(f(s))(\lm_s\ten 1)} \ ds\rightarrow0$$
for every $f\in C_c(G,A)$. By boundedness of $(h_i)$, it follows that $\Theta(h_i)\rightarrow\id_{G\ltimes A}$ in the point norm topology.

$(2)\Rightarrow(1)$: Identify $A$ with $zA\subseteq A_\alpha''$, and identify the $C^*$-dynamical systems $(A,G,\alpha)\cong (zA,G,\overline{\alpha})$. We may also assume $A_\alpha''\subseteq\BH$ is standardly represented, so that $\alpha(x)=U^*(1\ten x)U$, for a unitary $U\in L^\infty(G)\oten B(H)$.

Using the standard implementation $U$ along with the commutation relation $U(\lm_s\ten 1)=(\lm_s\ten u_s)U$, for each $f\in C_c(G,A)$ we have
\begin{align*}
\int_G\lm_s\ten h_i(s)(f(s))u_s \ ds&=\int_G(1\ten h_i(s)(f(s)))(\lm_s\ten u_s) \ ds\\
&=\int_GU\alpha(h_i(s)(f(s)))U^*(\lm_s\ten u_s) \ ds\\
&=\int_GU\alpha(h_i(s)(f(s)))(\lm_s\ten 1)U^* \ ds\\
&=U\Theta(h_i)((\alpha\times\lm)(f))U^*\\
&\rightarrow U(\alpha\times\lm)(f)U^*\\
&=\int_G\lm_s\ten f(s)u_s \ ds,
\end{align*}
where the convergence is in the norm topology of $\mc{B}(L^2(G,H))$. Consequently, for any  $\eta\in H$, with $\om_\eta$ denoting the associated vector functional on $B(H)$,
\begin{align*}\int_G\la\eta, h_i(s)(f(s))u_s\eta\ra \lm_s\ ds &= (\id\ten\om_{\eta})\bigg(\int_G\lm_s\ten h_i(s)(f(s))u_s \ ds\bigg)\\
&\rightarrow(\id\ten\om_{\eta})\bigg(\int_G\lm_s\ten f(s)u_s \ ds\bigg)\numberthis \label{e:conveta}\\
&=\int_G\la\eta, f(s)u_s\eta\ra \lm_s\ ds,
\end{align*}
where the convergence is in $(C^*_\lm(G),\norm{\cdot})$ and is uniform for $\eta$ in bounded subsets of $H$. 

Let $f\in C_c(G,A)$ be positive in the sense that $f=f_0^*\star f_0$ in the convolution algebra $C_c(G,A)$. Then by positivity of $\Theta(h_i)$,
$$\int_G\lm_s\ten h_i(s)(f(s))u_s \ ds=U\Theta(h_i)((\alpha\times\lm)(f))U^*\geq0$$
so that 
$$\int_G\la\eta, h_i(s)(f(s))u_s\eta\ra \lm_s\ ds=\lm(v_{i,f,\eta})\geq0,$$
where $v_{i,f,\eta}(s)=\la\eta, h_i(s)(f(s))u_s\eta\ra$. Similarly,
$$\int_G\la\eta, f(s)u_s\eta\ra \lm_s \, ds=\lm(v_{f,\eta})\geq0,$$
where $v_{f,\eta}(s)=\la\eta, f(s)u_s\eta\ra$. By Lemma \ref{l:posdef} $w_{i,f,\eta}:=\Delta^{1/2}v_{i,f,\eta}$ and $w_{f,\eta}:=\Delta^{1/2}v_{f,\eta}$ are positive definite functions on $G$. Applying the convergence (\ref{e:conveta}) to $(\Delta^{1/2}g\ten 1)f\in C_c(G,A)$, for $g\in C_c(G)$, it follows that
$$\norm{\lm(w_{i,f,\eta}g)-\lm(w_{f,\eta}g)} \to 0, \ \ \ g\in C_c(G),$$
uniformly for $\eta$ in bounded subsets of $H$. 

We now show that $w_{i,f,\eta}\rightarrow w_{f,\eta}$ weak* in $B(G)$, and that $\norm{w_{i,f,\eta}}_{B(G)}\rightarrow \norm{w_{f,\eta}}_{B(G)}$, both uniformly in $\eta$. First, observe that $(w_{i,f,\eta})$ is bounded in $B(G)=C^*(G)^*$ uniformly in $\norm{\eta}$: since $w_{i,f,\eta}$ is positive definite, we have
\begin{align*}\norm{w_{i,f,\eta}}_{B(G)}&= w_{i,f,\eta}(e) \\
&= \la\eta,h_i(e)(f(e))\eta\ra\\
&\leq\norm{h_i(e)}_{cb}\norm{f(e)}\norm{\eta}^2
\end{align*}
Given $g\in C_c(G)$, pick $v\in A(G)$ with $v\equiv 1$ on $\mathrm{supp}(g)$, then
$$\la w_{i,f,\eta}-w_{f,\eta},g\ra=\la w_{i,f,\eta}-w_{f,\eta},vg\ra=\la\lm(w_{i,f,\eta}g)-\lm(w_{f,\eta}g),v\ra\rightarrow0.$$
Since the image of $C_c(G)$ under the universal representation of $G$ is dense in $C^*(G)$ and $(w_{i,f,\eta})$ is bounded in $B(G)=C^*(G)^*$ (uniformly in $\norm{\eta}$), we have $w_{i,f,\eta}\rightarrow w_{f,\eta}$ weak* in $B(G)$, uniformly for $\eta$ in bounded subsets.

The convergence $\norm{w_{i,f,\eta}}_{B(G)}\rightarrow \norm{w_{f,\eta}}_{B(G)}$ and its uniformity in $\eta$ follow from the point norm convergence $h_i(e)\rightarrow\id_A$:
$$\lim_i\norm{w_{i,f,\eta}}_{B(G)}=\lim_i\la\eta,h_i(e)(f(e))\eta\ra=\la\eta,f(e)\eta\ra=\norm{w_{f,\eta}}_{B(G)}.$$
Thus, in the notation of \cite{GL}, $w_{i,f,\eta}\rightarrow w_{f,\eta}$ in $(B(G),\tau_{nw^*})$, uniformly for $\eta$ in bounded subsets of $H$. By \cite[Theorem A]{GL}, it follows that $w_{i,f,\eta}\rightarrow w_{f,\eta}$ in the $A(G)$-multiplier topology, and therefore uniformly on compact sets, and the convergence is uniform for $\eta$ in bounded subsets of $H$. Thus, given $K\subseteq G$ compact and $\ep>0$, pick $i_\ep$ such that 
$$\sup_{s\in K}|w_{i,f,\eta}(s)-w_{f,\eta}(s)|<\frac{\ep}{\sup_{s\in K}\Delta^{-1/2}(s)}, \ \ \ i\geq i_\ep.$$
Then for all $i\geq i_\ep$
$$\sup_{s\in K}|v_{i,f,\eta}(s)-v_{f,\eta}(s)|=\sup_{s\in K}|\Delta^{-1/2}(s)||w_{i,f,\eta}(s)-w_{f,\eta}(s)|<\ep,$$
and $v_{i,f,\eta}\rightarrow v_{f,\eta}$ uniformly on compact sets, uniformly for $\eta$ in bounded subsets of $H$. 
In particular,
$$\sup_{\norm{\eta}\leq 2}|\la\eta,(h_i(s)(f(s))-f(s))u_s \eta\ra|=\sup_{\norm{\eta}\leq 2}|\la\eta,h_i(s)(f(s))u_s\eta\ra - \la \eta,f(s)u_s\eta\ra|\rightarrow0,$$
uniformly for $s$ in compact subsets of $G$. Hence, by polarization,
\begin{align*}\norm{h_i(s)(f(s))-f(s)}&=\norm{(h_i(s)(f(s))-f(s))u_s}\\
&=\sup_{\norm{\eta_1},\norm{\eta_2}\leq 1}|\la\eta_1,(h_i(s)(f(s))-f(s))u_s \eta_2\ra|\\
&\leq\frac{1}{4}\sum_{k=0}^3\sup_{\norm{\eta_1},\norm{\eta_2}\leq 1}|\la(\eta_1+i^k\eta_2),(h_i(s)(f(s))-f(s))u_s (\eta_1+i^k\eta_2)\ra|\\ &\rightarrow0
\end{align*}
for each $f\in C_c(G,A)$ of the form $f_0^*\star f_0$, uniformly for $s$ in compact subsets of $G$. By boundedness of $h_i(s)$ in $\mc{CB}(A)$, Lemma \ref{l:dense} and a standard $3\ep$-argument, it follows that
$$\norm{h_i(s)(f(s))-f(s)}\rightarrow0, \ \ \ f\in C_c(G,A),$$
uniformly for $s$ in compact subsets of $G$.

\end{proof}



Let $(A,G,\alpha)$ be a $C^*$-dynamical system. The space $L^2_c(G)\whten A_\alpha''\cong M_{|I|,1}(A_\alpha'')$, where $|I|$ is the dimension of $L^2(G)$, and is therefore a Hilbert $W^*$-module over $A_\alpha''$ in the canonical fashion \cite{Blech}. The next lemma is used to make sense of the ``diagonal'' action of $\LI\oten A_\alpha''$ from $L^2(G,A)$ into $L^2_c(G)\whten A_\alpha''$. 

\begin{lem}\label{l:diagonal} Let $G$ be a locally compact group and let $A$ be a $C^*$-subalgebra of a von Neumann algebra $M$. There exists a contraction
$$\pi:\LI\oten M\rightarrow\mc{CB}(L^2(G,A),L^2(G)_c\whten M)$$
such that for every $F\in\LI\oten M$, $\xi,\eta\in L^2(G)$, $a\in A$, and $\mu\in M_*$,
$$\la\eta\ten\mu,\pi(F)(\xi\ten a)\ra=\la(\om_{\eta,\xi}\ten\id)(F)a,\mu\ra=\int_G\xi(s)\overline{\eta}(s)\la \tilde{F}(s)a,\mu\ra \ ds.$$
\end{lem}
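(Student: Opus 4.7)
The plan is to realize $L^2(G)_c \whten M$ concretely inside $\mc{B}(K, \LT \otimes K)$ for a faithful normal representation $M \subseteq \mc{B}(K)$, and then to construct $\pi(F)$ as left multiplication by $F$ composed with a canonical embedding of $L^2(G, A)$. Fixing such a representation so that $\LI \oten M \subseteq \mc{B}(\LT \otimes K)$, I will use the standard identification of $L^2(G)_c \whten M$ with the self-dual column $M$-module
$$\{T \in \mc{B}(K, \LT \otimes K) : Tm' = (1 \otimes m') T \text{ for all } m' \in M'\},$$
on which simple elements $\eta \otimes \mu$ of the predual $L^2(G)_r \otimes^\pi M_*$ pair via $\la \eta \otimes \mu, \zeta \otimes m\ra = \la \zeta, \eta\ra \mu(m)$.

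By Lemma \ref{l:hten} and the complete isometry $A \hookrightarrow M$, there is a canonical complete isometry $\iota: L^2(G, A) \cong L^2(G)_c \otimes^h A \hookrightarrow L^2(G)_c \whten M$ sending an elementary tensor $\xi \otimes a$ to the operator $k \mapsto \xi \otimes ak$. For $F \in \LI \oten M$, left multiplication $L_F$ is completely contractive on $\mc{B}(K, \LT \otimes K)$. Since $1 \otimes M' \subseteq (\LI \oten M)'$, the map $L_F$ preserves the commutant characterization above and hence restricts to a CB map on $L^2(G)_c \whten M$ of CB norm at most $\|F\|$. Setting $\pi(F) := L_F \circ \iota$ then furnishes a complete contraction
$$\pi : \LI \oten M \to \mc{CB}(L^2(G, A), L^2(G)_c \whten M).$$

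To verify the stated formula, I will first check it directly on elementary $F = f \otimes m \in \LI \otimes M$: one has $\pi(F)(\xi \otimes a) = (f\xi) \otimes (ma)$, and pairing with $\eta \otimes \mu$ yields $\la f\xi, \eta\ra \mu(ma) = \int_G f(s) \xi(s) \overline{\eta(s)} \mu(ma) \, ds$, matching the right-hand side. For general $F$, both sides are linear in $F$; the right-hand side equals the predual pairing $\la F, g\ra$ with $g(s) := \xi(s)\overline{\eta(s)} \mu_a \in L^1(G, M_*)$ (where $\mu_a(y) := \mu(ya)$), so it is weak*-continuous in $F$. For the left-hand side with a vector functional $\mu = \om_{k_1, k_2}$, the pairing equals $\la F(\xi \otimes ak_1), \eta \otimes k_2\ra$, which is also weak*-continuous in $F$. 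Weak*-density of $\LI \otimes M$ in $\LI \oten M$ then extends the formula from elementary $F$ to all $F$, at least when $\mu$ is a vector functional.

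The hardest part will be extending the formula from vector functionals to arbitrary $\mu \in M_*$: I plan to express such $\mu$ as a norm-convergent $\ell^1$-sum of vector functionals $\om_{k_{i,1}, k_{i,2}}$ (possible since the representation is faithful and normal, so $\mu$ extends to a normal functional on $\mc{B}(K)$), combine this with the uniform bound $\|\pi(F)\|_{cb} \leq \|F\|$ established above, and pass to the limit in the pairing. Once this is handled, both sides of the stated formula agree for all $F, \xi, \eta, a, \mu$, completing the proof.
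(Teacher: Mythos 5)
Your proposal is correct, but it takes a genuinely different route from the paper's. The paper builds $\pi$ abstractly: it tensors the two normal left-multiplication representations $\pi_1:\LI\rightarrow\mc{CB}(L^2_c(G))$ and $\pi_2:M\rightarrow\mc{CB}(A,M)$, identifies $\mc{CB}(L^2_c(G))\oten\mc{CB}(A,M)$ with $\mc{CB}(L^2_c(G)\opten A,\,L^2_c(G)\whten M)$ via operator-space duality and the dual slice map property, checks the formula on elementary tensors and extends by weak*-weak* continuity, and then--because this only produces a map on the operator space projective tensor product $L^2_c(G)\opten A$--devotes a separate final computation (an orthonormal-basis expansion of $(\om_{\xi_k,\xi_l}\ten\id)(F^*F)$ and positivity of the matrix of slice maps) to showing $\pi(F)$ extends to the Hilbert module $L^2(G,A)$ with norm at most $\norm{F}$. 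Your spatial construction sidesteps that extension step entirely: realizing $L^2(G)_c\whten M$ as the concrete column space $\{T\in\mc{B}(K,\LT\ten K):Tm'=(1\ten m')T,\ m'\in M'\}$ and setting $\pi(F)=L_F\circ\iota$ gives the bound on all of $L^2(G,A)$ for free from operator norms, since $\iota$ is isometric for the Hilbert-module norm (either by a direct computation of $\norm{\iota(\zeta)k}^2$, or from Lemma \ref{l:hten}, injectivity of $\hten$, and the Blecher--Smith self-duality already used in Lemma \ref{l:sdcompletion}). The price is the extra passage from vector functionals to general $\mu\in M_*$, which you correctly identify and which is routine given your uniform bound and the $\ell^1$-decomposition of normal functionals in a faithful normal representation. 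Two minor points: the predual of $L^2(G)_c\whten M$ is $L^2(G)_r\hten M_*$ rather than the Banach space projective tensor product, but this is immaterial since you only need that elementary tensors $\eta\ten\mu$ give bounded functionals with the stated pairing; and independence of the right-hand side from the choice of representative $\tilde{F}$ should be noted explicitly, which your identification of it with $\la F,g\ra$ for $g(s)=\xi(s)\overline{\eta(s)}(a\cdot\mu)\in L^1(G,M_*)$ already accomplishes.
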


\begin{proof} Let $\pi_1:\LI\rightarrow\mc{CB}(L^2_c(G))$ and $\pi_2:M\rightarrow\mc{CB}(A,M)$ be the canonical maps given by left multiplication. Since both $\pi_1$ and $\pi_2$ are normal, their tensor product extends to the weak* spatial tensor product
$$\pi_1\ten\pi_2:\LI\oten M\rightarrow \mc{CB}(L^2_c(G))\oten \mc{CB}(A,M).$$
As $\mc{CB}(L^2_c(G))\cong B(L^2(G))$ \cite[Theorem 3.4.1]{ER} has the dual slice map property, and $B(L^2(G))\cong (L^2_c(G)\opten L^2_c(G)^*)^*$ (by \cite[Propositions 9.3.2, 9.3.4]{ER}), it follows that 
\begin{align*} \mc{CB}(L^2_c(G))\oten \mc{CB}(A,M)&=(L^2_c(G)\opten L^2_c(G)^* \opten A\opten M_*)^*\\
&\cong(L^2_c(G)\opten A \opten L^2_c(G)^*\opten M_*)^*\\
&=\mc{CB}(L^2_c(G)\opten A,L^2_c(G)\whten M),
\end{align*}
where the second isomorphism is simply the swap between the second and third legs, and the last equality uses the fact that $L^2_c(G)\oten M=L^2_c(G)\whten M$ \cite[Corollary 3.5]{BS}. Thus, composing $\pi_1\ten\pi_2$ with the swap we obtain a weak*-weak* continuous complete contraction
$$\pi:\LI\oten M\rightarrow\mc{CB}(L^2_c(G)\opten A,L^2_c(G)\whten M),$$
such that for every $F\in\LI\oten M$, $\xi,\eta\in L^2(G)$, $a\in A$, and $\mu\in M_*$,
$$\la\eta\ten\mu,\pi(F)(\xi\ten a)\ra=\la(\om_{\eta,\xi}\ten\id)(F)a,\mu\ra=\int_G\xi(s)\overline{\eta}(s)\la \tilde{F}(s)a,\mu\ra \ ds$$
(this is obvious if $F=f\ten x$, $f\in\LI$, $x\in M$, and the general formula follows by weak*-weak* continuity of $\pi$). Note that the latter equality is independent of the representing function $\tilde{F}$ of $F$. 

To finish, we observe that $\pi(F)$ extends to $L^2(G,A)$ with norm less than $\norm{F}$. Let $\xi_1,...,\xi_n\in L^2(G)$, $a_1,..,a_n\in A$, and let $(e_i)$ be an orthonormal basis of $L^2(G)$. A standard Hilbert space argument shows that
$$(\om_{\xi_k,\xi_l}\ten\id)(F^*F)=\sum_i(\om_{\xi_k,e_i}\ten\id)(F^*)(\om_{e_i,\xi_l})(F),$$
where the sum converges weak* for each $k$ and $l$. Then with $\xi=\sum_{k=1}^n\xi_k\ten a_k$, we have
\begin{align*}\norm{\pi(F)\xi}_{w^*h}^2&=\norm{\sum_i(e_i^*\ten\id)(\pi(F)\xi)^*(e_i^*\ten\id)(\pi(F)\xi)}\\
&=\norm{\sum_{k,l=1}^n\sum_i(e_i^*\ten\id)(\pi(F)(\xi_k\ten a_k))^*(e_i^*\ten\id)(\pi(F)(\xi_l\ten a_l))}\\
&=\norm{\sum_{k,l=1}^n\sum_i((\om_{e_i,\xi_k}\ten\id)(F)a_k)^*((\om_{e_i,\xi_l}\ten\id)(F)a_l)}\\
&=\norm{\sum_{k,l=1}^n\sum_ia_k^*(\om_{\xi_k,e_i}\ten\id)(F^*)(\om_{e_i,\xi_l}\ten\id)(F)a_l}\\
&=\norm{\sum_{k,l=1}^na_k^*(\om_{\xi_k,\xi_l}\ten\id)(F^*F)a_l}\\
&\leq\norm{F}^2\norm{\sum_{k,l=1}^na_k^*(\om_{\xi_k,\xi_l}\ten\id)(1)a_l}\\
&=\norm{F}^2\norm{\xi}_{L^2(G,A)}^2,
\end{align*}
where the inequality follows from positivity of the map $[(\om_{\xi_k,\xi_l}\ten\id)]:\LI\oten M\rightarrow M_n(M)$. Hence, $\pi(F)$ extends to a bounded linear map from $L^2(G,A)$ into $L^2_c(G)\whten M$. A similar argument shows that $\pi(F)$ is completely bounded on $L^2(G,A)$ with $\norm{\pi(F)}_{cb}\leq\norm {F}$.
\end{proof}

We now possess the ingredients to establish our main result of this section.

\begin{proof}[Proof of Theorem \ref{t:C*amen}] $(1)\Rightarrow(2)$ follows directly from Proposition \ref{p:C*amen}.

$(2)\Leftrightarrow(3)$ follows immediately from Theorem \ref{t:posdef}.

$(3)\Rightarrow(4)$ follows from the techniques used in the proof  \cite[Lemma 6.5]{ABF}, which, as shown in the proof of $(7)\Rightarrow(8)$ in \cite[Proposition 3.12]{BEW}, extend to the locally compact case. We outline the construction, referring the reader to the proof of \cite[Proposition 3.12]{BEW} for details. Throughout the argument we identify $A$ with $zA\subseteq A_\alpha''$.

Let $(\xi_i)\subset C_c(G,\ell^2(A))$ be a net from (3). Note that we may view $(\xi_i)$ inside $\ell^2_c\hten L^2(G,A)$, as
\begin{align*}\ell^2_c\hten L^2(G,A)&=\ell^2_c\hten(L^2(G)_c\hten A)=(\ell^2_c\hten L^2(G)_c)\hten A\\
&=(\ell^2\ten L^2(G))_c\hten A \ \ \ \ \textnormal{\cite[Proposition 9.3.5]{ER}}\\
&=(L^2(G)\ten\ell^2)\hten A\\
&=L^2(G)_c\hten (\ell^2_c\hten A)\\
&=L^2(G,\ell^2(A)),
\end{align*}
where the Hilbert $A$-module structure on the latter space is
$$\la\xi,\eta\ra=\int_G\la\xi(s),\eta(s)\ra \ ds, \ \ \ \xi,\eta\in L^2(G,\ell^2(A)).$$
Let $\Lambda=\{a\in A\mid 0\leq a\leq 1\}$, which forms a bai for $A$ under the natural ordering, and converges weak* to 1 inside $A_\alpha''$. Define $P_{i,a}:\LI\oten A_\alpha''\rightarrow A_\alpha''$ by
$$P_{i,a}(F)=\la(1\ten 1\ten a^{1/2})\xi_i,(1\ten F)(1\ten 1\ten a^{1/2})\xi_i\ra, \ \ \ F\in \LI\oten A_\alpha'',$$
where we write $F$ for the map $\pi(F):L^2(G,A)\rightarrow L^2_c(G)\whten A_\alpha''$ from Lemma \ref{l:diagonal}. Then $P_{i,a}$ is a completely positive contraction.

Suppose $A_\alpha''\subseteq\BH$ and let $K=\oplus^2_{a\in\Lambda} H$. Then with $P_i:=\oplus_{a} P_{i,a}$, we obtain a completely positive contraction from $\LI\oten A_\alpha''$ into $\mc{B}(K)$. Passing to a subnet, we may assume that $P_i$ converges to $P$ in the weak* topology of $\mc{CB}(\LI\oten A_\alpha'',\mc{B}(K))$. For each $a\in\Lambda$, let $P_a:\LI\oten A_\alpha''\rightarrow A_\alpha''$ be the compression of $P$ to the $a^{th}$ block, and let $Q_a:\LI\oten Z(A_\alpha'')\rightarrow A_\alpha''$ be the restriction of $P_a$. The same monotonicity argument from \cite[Lemma 6.5]{ABF} shows that for each positive $F\in \LI\oten Z(A_\alpha'')$, $(Q_a(F))$ is increasing in $a$, and hence by boundedness it converges weak*. Let $Q:\LI\oten Z(A_\alpha'')\rightarrow A_\alpha''$ be the resulting map. Using the fact that $a\mapsto 1\ten 1\ten a$ and $s\mapsto 1\ten \lm_s\ten \alpha_s$ is an equivariant representation of $(A,G,\alpha)$ on the direct sum $\oplus_{n=1}^\infty L^2(G,A)\cong\ell^2_c\hten L^2(G,A)$, it follows more or less verbatim from the proof of \cite[Lemma 6.5]{ABF} (see also \cite[Proposition 3.12]{BEW}) that $Q$ is a $G$-equivariant projection of norm one. Hence, $(A,G,\alpha)$ is von Neumann amenable.

$(4)\Rightarrow(1)$ follows immediately from Theorem \ref{t:Reiter}.

Finally, when $Z(A^{**})=Z(A)^{**}$, the particular conclusion from Proposition \ref{p:C*amen} yields the claim. 
\end{proof}

\begin{remark} In \cite[Definition 3.27]{BEW}, Buss, Echterhoff and Willett defined a $C^*$-dynamical system $(A,G,\alpha)$ to have the \textit{(wAP)} if there exists a bounded net $(\xi_i)\in C_c(G,A)\subseteq L^2(G,A)$ such that for all $\mu\in A^*_c$, and $a\in A$, 
$$\mu(\la\xi_i,(1\ten a)(\lm_s\ten\alpha_s)\xi_i\ra-a)\rightarrow0,$$
uniformly on compact subsets of $G$. This notion is a weakening of Exel and Ng's positive approximation property, \textit{a priori} unrelated to condition (3) of Theorem \ref{t:C*amen}, which is a specific instance of the positive weak approximation property of B\'{e}dos and Conti. However, it was shown that the wAP coincides with amenability  \cite[Theorem 3.28]{BEW}. Hence, it is equivalent to condition (3) of Theorem \ref{t:C*amen}.
\end{remark}

\begin{cor} Let $(A,G,\alpha)$ be a $C^*$-dynamical system such that $Z(A^{**})=Z(A)^{**}$. Then $(A,G,\alpha)$ is amenable if and only if it has the 1-positive approximation property.
\end{cor}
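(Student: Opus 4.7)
The plan is to derive both implications directly from Theorem \ref{t:C*amen}, interpreting Exel and Ng's 1-positive approximation property as a concrete reformulation of condition (3) of that theorem. The hypothesis $Z(A^{**}) = Z(A)^{**}$ is precisely what allows the approximating net to be chosen in $C_c(G, A)$ (in fact, in $C_c(G, Z(A))$) rather than only in $C_c(G, \ell^2(A))$, which is why the equivalence holds under this assumption.

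For the forward direction, I would invoke the ``moreover'' clause of Theorem \ref{t:C*amen}: amenability combined with $Z(A^{**}) = Z(A)^{**}$ produces a net $(\xi_i) \in C_c(G, Z(A))$ with $\la\xi_i,\xi_i\ra \leq 1$ whose associated Herz--Schur multipliers take the explicit form $h_{\xi_i}(s)(a) = a\la\xi_i,(\lm_s\ten\alpha_s)\xi_i\ra$, and which satisfy $\|h_{\xi_i}(s)(f(s)) - f(s)\| \to 0$ for $f \in C_c(G,A)$, uniformly for $s$ in compact subsets. The only real computation is a centrality argument: since $\xi_i(t)^* \in Z(A)$, and $\alpha_s(\xi_i(s^{-1}t)) \in Z(A)$ as well (because $\alpha_s$ preserves the center), each of these commutes with $f(s) \in A$, so
\[ h_{\xi_i}(s)(f(s)) \;=\; f(s) \int_G \xi_i(t)^* \alpha_s(\xi_i(s^{-1}t)) \, dt \;=\; \int_G \xi_i(t)^* f(s) \alpha_s(\xi_i(s^{-1}t)) \, dt, \]
which is precisely the integral in Exel and Ng's definition. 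Compactness of $\supp f$ then converts uniformity in $s$ into the required uniformity in $(s,f(s))$, yielding the 1-positive approximation property with $\eta_i = \xi_i$.

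For the converse, I would start with a net $(\xi_i) \in C_c(G,A)$ witnessing the 1-positive approximation property, so that $\|\la\xi_i,\xi_i\ra\| \leq 1$ (equivalently, $\la\xi_i,\xi_i\ra \leq 1$ since this inner product is positive) and the displayed integral converges to $f(s)$ uniformly in $(s,f(s))$ for every $f \in C_c(G,A)$. Embedding $A \hookrightarrow \ell^2(A)$ as the first coordinate lets me view $(\xi_i) \subseteq C_c(G, \ell^2(A))$, and a direct expansion of $h_{\xi_i}(s)(a) = \la\xi_i, (1\ten 1\ten a)(\lm_s\ten 1\ten \alpha_s)\xi_i\ra$ gives back exactly the integral with $a = f(s)$. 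Thus the witnesses of the 1-positive approximation property satisfy condition (3) of Theorem \ref{t:C*amen}, and the implication (3) $\Rightarrow$ (1) of that theorem delivers amenability.

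The only subtle point I anticipate is purely clerical: matching normalizations (the Exel--Ng bound $\|\la\xi_i,\xi_i\ra\| \leq 1$ versus the operator inequality $\la\xi_i,\xi_i\ra \leq 1$, which coincide by positivity) and matching uniformity conventions across the two definitions. The only algebraic ingredient is the centrality step in the forward direction; no substantive obstacle lies beyond this.
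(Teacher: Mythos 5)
Your proof is correct. The forward direction is essentially the paper's argument: the paper simply says it ``follows immediately from the special case of Theorem \ref{t:C*amen},'' and your centrality computation showing that $h_{\xi_i}(s)(f(s)) = \int_G \xi_i(t)^* f(s)\alpha_s(\xi_i(s^{-1}t))\,dt$ is exactly the bookkeeping implicit in that remark (together with the observation that, for compactly supported $f$, uniformity in $(s,f(s))$ and uniformity on compacta coincide). Where you genuinely diverge is the converse: the paper disposes of it by citing \cite[Theorem 3.28]{BEW}, which asserts that the positive approximation property (indeed the weaker wAP) implies amenability for \emph{arbitrary} $(A,G,\alpha)$, with no hypothesis on $Z(A^{**})$. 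You instead embed $A\hookrightarrow\ell^2(A)$ in the first coordinate and feed the Exel--Ng net directly into condition (3) of Theorem \ref{t:C*amen}, then use $(3)\Rightarrow(4)\Rightarrow(1)$. This is valid and has the virtue of being self-contained within the paper's own results (the underlying machinery, the construction of a $G$-equivariant conditional expectation from a weak approximation net via \cite[Lemma 6.5]{ABF}, is the same engine driving the cited external theorem); the paper's citation, on the other hand, makes transparent that the converse direction never needed the hypothesis $Z(A^{**})=Z(A)^{**}$ in the first place, a point your argument slightly obscures even though your route also does not use that hypothesis for the converse.
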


\begin{proof} The forward direction follows immediately from the special case of Theorem \ref{t:C*amen}. The reverse direction is always true, by \cite[Theorem 3.28]{BEW}.
\end{proof}

\begin{cor}\label{c:SA} A commutative $C^*$-dynamical system $(C_0(X),G,\alpha)$ is amenable if and only if it is strongly amenable.
\end{cor}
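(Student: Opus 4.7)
The reverse direction (strong amenability $\Rightarrow$ amenability) is \cite[Remark 3.6]{BEW}; all the work is in the forward direction. My plan is to invoke the ``Moreover'' clause of Theorem \ref{t:C*amen} and read off strong amenability directly from the net it produces.

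Since $A = C_0(X)$ is commutative, $A^{**}$ is commutative, so $Z(A^{**}) = A^{**} = Z(A)^{**}$ holds trivially and the ``Moreover'' clause applies. Amenability of $(C_0(X),G,\alpha)$ therefore yields a net $(\xi_i)$ in $C_c(G,Z(A))$ with $\|\xi_i\|_{L^2(G,Z(A))} \leq 1$ whose associated Herz-Schur multipliers take the form $h_{\xi_i}(s)(a) = a\,k_i(s)$, where
\[ k_i(s) := \la \xi_i, (\lm_s \ten \alpha_s)\xi_i \ra \in Z(A), \]
and these multipliers satisfy condition (3) of Theorem \ref{t:C*amen}, namely $\|h_{\xi_i}(s)(f(s)) - f(s)\| \to 0$ uniformly for $s$ in compact subsets of $G$, for every $f \in C_c(G,A)$.

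I then claim $(k_i)$ witnesses strong amenability. Clearly $k_i \in C_c(G,Z(A)) \subseteq C_c(G,Z(M(A)))$. Each $k_i$ is of positive type with respect to $\alpha$ by the standard Naimark-type computation from the Hilbert $A$-module $L^2(G,A)$, exactly as in \cite[Proposition 2.5]{AD87}; moreover $\|k_i(e)\| = \|\la\xi_i,\xi_i\ra\| = \|\xi_i\|_{L^2}^2 \leq 1$, so $k_i \in P_1(A,G,\alpha)$. To obtain strict convergence $k_i(s) \to 1$ in $M(A) = C_b(X)$, uniformly on compacta, fix a compact $K \subseteq G$ and $a \in A$, pick $g \in C_c(G)$ with $g \equiv 1$ on $K$, and apply Theorem \ref{t:C*amen}(3) to $f(s) = g(s)a \in C_c(G,A)$:
\[ g(s)(a\,k_i(s) - a) = h_{\xi_i}(s)(g(s)a) - g(s)a \longrightarrow 0 \]
uniformly in $s$ on compact subsets of $G$. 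Restricting to $K$ gives $\|a\,k_i(s) - a\| \to 0$ uniformly in $s \in K$, and since $A$ is commutative this is exactly strict convergence $k_i(s) \to 1$ in $M(A)$, uniform on $K$.

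There is no real obstacle: once the ``Moreover'' clause is in hand, the corollary is just matching definitions. The one point requiring care is confirming that the centralized vectors $\xi_i$ from Theorem \ref{t:C*amen} produce a bona fide $P_1$-valued function $k_i$ (with correct normalization at $e$), and that the point-norm approximation applied to the elementary tensor $f = g \otimes a$ yields the uniform-on-compacta strict convergence demanded by the definition of strong amenability.
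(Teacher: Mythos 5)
Your proof is correct, and it shares its engine with the paper's: both invoke the ``Moreover'' clause of Theorem \ref{t:C*amen} (applicable since $Z(A^{**})=A^{**}=Z(A)^{**}$ for commutative $A$) to get contractive $\xi_i\in C_c(G,C_0(X))$ whose positive type functions $k_i(s)=\la\xi_i,(\lm_s\ten\alpha_s)\xi_i\ra$ satisfy $\norm{a\,k_i(s)-a}\rightarrow0$ for each $a\in C_0(X)$, uniformly for $s$ in compacta; your trick of testing condition (3) on $g\ten a$ with $g\equiv1$ on a prescribed compact set is exactly how that strict convergence is extracted. The difference is where you stop: you observe that $(k_i)$ already witnesses the paper's definition of strong amenability and finish, whereas the paper pushes further, using Buck's theorem (the strict and compact-open topologies agree on bounded subsets of $C_b(X)$) to upgrade to uniform convergence of the functions $k_i:G\times X\to\bC$ on compact subsets of $G\times X$, approximating $\xi_i$ by elements of $C_c(G)\ten C_c(X)$, and citing \cite[Proposition 2.5(2)]{AD} to conclude that $(G,X)$ is a topologically amenable transformation group. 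For the corollary as literally stated your shorter route suffices; the paper's extra steps buy the identification with topological amenability of $(G,X)$, which is the formulation used in the introduction (e.g.\ for the measurewise amenability consequence). The small points you flag --- that $k_i$ is norm-continuous and compactly supported (with support contained in $\supp\xi_i\,(\supp\xi_i)^{-1}$), of positive type via the Gram-matrix identity $\alpha_{s_i}(k(s_i^{-1}s_j))=\la(\lm_{s_i}\ten\alpha_{s_i})\xi,(\lm_{s_j}\ten\alpha_{s_j})\xi\ra$, and normalized by $\norm{k_i(e)}=\norm{\la\xi_i,\xi_i\ra}\leq1$ --- all check out as you indicate.
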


\begin{proof} Only one direction requires proof. If $(C_0(X),G,\alpha)$ is amenable, by the special case of Theorem \ref{t:C*amen} when $Z(A^{**})=Z(A)^{**}$, there exists a net $(\xi_i)$ in $C_c(G,C_0(X))$ whose corresponding positive type functions $h_i(s)=\la\xi_i,(\lm_s\ten\alpha_s)\xi_i\ra$ satisfy $\norm{h_i(e)}\leq 1$ and 
$$\norm{h_i(s)f-f}\rightarrow0, \ \ \ f\in C_0(X),$$
uniformly for $s$ in compact subsets of $G$. It follows that $h_i(s)\rightarrow 1$ strictly in $C_b(X)$, uniformly on compact subsets of $G$. Since the strict topology and the topology of uniform convergence on compacta agree on bounded subsets of $C_b(X)$ \cite[Theorem 1]{Buck}, the associated functions $h_i:G\times X\rightarrow\bC$ converge to 1 uniformly on compact subsets of $G\times X$. By norm density of $C_c(G)\ten C_c(X)$ in $L^2(G,C_0(X))$ we may assume without loss of generality that $\xi_i\in C_c(G)\ten C_c(X)$. Then the net $(\xi_i)$ satisfies the conditions of \cite[Proposition 2.5(2)]{AD}, hence $(G,X)$ is an amenable transformation group. 
\end{proof}

\section*{Acknowledgements}

The authors would like to thank Alcides Buss, Siegfried Echterhoff and Rufus Willett for helpful interactions and for keeping us updated on their recent work \cite{BEW}, which inspired a good portion of our results in section 4. We would also like to thank Mehrdad Kalantar for helpful discussions at various points during the project. The second author was partially supported by the NSERC Discovery Grant RGPIN-2017-06275.

\end{spacing}


\begin{thebibliography}{00}

\bibitem{ABF} F. Abadie, A. Buss, and D. Ferraro, 
\textit{Amenability and approximation properties for partial actions and Fell bundles}.
arXiv:1907.03803.

\bibitem{ADI} C. Anantharaman-Delaroche,
\textit{Action moyennable d'un groupe localement compact sur une alg\`{e}bre de von Neumann}.
Math. Scand. 45 (1979), 289-304.

\bibitem{ADII} C. Anantharaman-Delaroche,
\textit{Action moyennable d'un groupe localement compact sur une alg\'{e}bre de von Neumann II}.
Math. Scand. 50 (1982), 251-268.

\bibitem{AD87} C. Anantharaman-Delaroche, 
\textit{Syst\`{e}mes dynamiques non commutatifs et moyennabilit\'{e}}.
Math. Ann. 279 (2) (1987), 297-315.

\bibitem{AD} C. Anantharaman-Delaroche,
\textit{Amenability and exactness for dynamical systems and their $C^*$-algebras}.
Trans. Amer. Math. Soc. 354 (2002), no. 10, 4153-4178.

\bibitem{AD03} C. Anantharaman-Delaroche,
\textit{On spectral characterizations of amenability}.
Israel J. Math. 137 (2003), 1-33.

\bibitem{ADR} C. Anantharaman-Delaroche and J. Renault,
\textit{Amenable groupoids}. 
Monographie de l'Enseignement Math\'{e}matique no. 36, Gen\`{e}ve, 2000.


\bibitem{BC3} 
E. B\'{e}dos and R. Conti,
\textit{On discrete twisted $C^*$-dynamical systems, Hilbert $C^*$-modules and regularity}.
M\"{u}nster J. Math. 5 (2012), 183-208.

\bibitem{BC} 
E. B\'{e}dos and R. Conti,
\textit{Fourier series and twisted $C^*$-crossed products}.
J. Fourier Anal. Appl. 21 (2015), no. 1, 32-75.

\bibitem{BC2} 
E. B\'{e}dos and R. Conti,
\textit{The Fourier--Stieltjes algebra of a $C^*$-dynamical system}.
Internat. J. Math. 27 (2016), no. 6, 1650050, 50 pp.

\bibitem{BS}
D. P. Blecher and R. R. Smith,
\textit{The dual of the Haagerup tensor product}.
J. London Math. Soc. (2) 45 (1992), no. 1, 126-144.

\bibitem{Blech} D. P. Blecher,
\textit{On selfdual Hilbert modules}.
Operator algebras and their applications (Waterloo, ON, 1994/1995), 65-80,  Fields Inst. Commun., 13, Amer. Math. Soc., Providence, RI, 1997.

\bibitem{BLM}
D. P. Blecher and C. Le Merdy, \textit{Operator algebras and their modules---an operator space approach}.
Oxford Univ. Press, Oxford, 2004.



\bibitem{Buck} R. C. Buck,
\textit{Bounded continuous functions on a locally compact space}. 
Michigan Math. J. 5 (1958), 95-104.

\bibitem{BEW} A. Buss, S. Echterhoff and R. Willett,
\textit{Amenability and weak containment for actions of locally compact groups on $C^*$-algebras}.
arXiv:2003.03469.





\bibitem{ER} 
E. G. Effros and Z.-J. Ruan,
\textit{Operator spaces}. 
London Mathematical Society Monographs. New Series, 23. The Clarendon Press, Oxford University Press, New York, 2000.

\bibitem{ER2} 
E. G. Effros and Z.-J. Ruan,
\textit{Operator space tensor products and Hopf convolution algebras}.
J. Operator Th. 50 (2003), 131-156

\bibitem{E} P. Eymard,
\textit{L'alg\`{e}bre de Fourier d'un groupe localement compact}.
Bull. Soc. Math. France, 92 (1964), 181-236.


\bibitem{Exel} R. Exel,
\textit{Amenability for Fell bundles}.
J. Reine Angew. Math. 492 (1997), 41-73.

\bibitem{EN} R. Exel and C.-K. Ng,
\textit{Approximation property of $C^*$-algebraic bundles.}
Math. Proc. Cambridge Philos. Soc. 132 (2002), no. 3, 509-522.


\bibitem{Fujita} M. Fujita, 
\textit{Banach algebra structure in Fourier spaces and generalization of harmonic analysis on locally compact groups}. 
J. Math. Soc. Japan 31 (1979), no. 1, 53-67.

\bibitem{GL} E. E. Granirer and M. Leinert,
\textit{On some topologies which coincide on the unit sphere of the Fourier-Stieltjes algebra $B(G)$ and of the measure algebra $M(G)$}.
Rocky Mountain J. Math. 11 (1981), no. 3, 459-472.





\bibitem{H} Haagerup, U.,
\e{The standard from of von Neumann algebras}.
Math. Scand. 37 (1975), 271-283.

\bibitem{HK} U. Haagerup and J. Kraus,
\textit{Approximation properties for group $C^*$-algebras and group von Neumann algebras}.
Trans. Amer. Math. Soc. 44 (1994), no. 2, 667-699.

\bibitem{Ham11} M. Hamana,
\textit{Injective envelopes of dynamical systems}.
Toyama Math. J. 34 (2011), 23-86.

\bibitem{HR} E. Hewitt and K. A. Ross,
\textit{Abstract Harmonic Analysis. Vol. 1: Structure of topological groups, integration theory, group representations.}
Grundlehren der mathematischen Wissenschaften, 115. Second edition. Springer-Verlag, New York, 1979.

\bibitem{I} A. Ikunishi,
\textit{The $W^*$-dynamical system associated with a $C^*$-dynamical system, and unbounded derivations}.
J. Funct. Anal. 79 (1988), 1-8.

\bibitem{II} A. Ionescu--Tulcea and C. Ionescu--Tulcea,
\textit{On the lifting property (I)}.
J. Math. Anal. Appl. 3 (1961), 537-546.

\bibitem{Kelley} J. L. Kelley, 
\textit{General Topology}.
Graduate Textbooks in Mathematics 27, Springer--Verlag, New York--Berlin--Heidelberg, 1955.

\bibitem{Lance} E. C. Lance,
\textit{Hilbert $C^*$-modules. A toolkit for operator algebraists.}
London Mathematical Society Lecture Note Series, 210. Cambridge University Press, Cambridge, 1995. x+130 pp.

\bibitem{MTT} A. McKee, I.G. Todorov and L. Turowska,
\textit{Herz-Schur multipliers of dynamical systems}.
Adv. Math. 331 (2018), 387-438.

\bibitem{MSTT} A. McKee, A. Skalski, I.G. Todorov and L. Turowska,
\textit{Positive Herz-Schur multipliers and approximation properties of crossed products}.
Math. Proc. Cambridge Philos. Soc. 165 (2018), no. 3, 511-532.




\bibitem{P} W. L. Paschke,
\textit{Inner product modules arising from compact automorphism groups of von Neumann algebras.}
Trans. Amer. Math. Soc. 224 (1976), 87-102.

\bibitem{Ped} G. K. Pedersen,
\textit{$C^*$-algebras and their automorphism groups}, 
London Mathematical Society Monographs, 14. Academic Press, Inc. London-New York, 1979.

\bibitem{Pier} J.-P. Pier,
\textit{Amenable locally compact groups}, 
Pure and Applied Mathematics. A Wiley--Interscience Publication. John Wiley \& Sons, Inc., New York, 1984. x+418 pp.


\bibitem{Ren} L.-B. Ren
\textit{Introduction to Operator Algebras}.
World Scientific Publishing Co. Pte. Ltd. (1992).

\bibitem{Renault} J. Renault,
\textit{A groupoid approach to $C^*$-algebras}.
Lect. Notes Math. 793. Berlin, Heidelberg, New York: Springer 1980.


\bibitem{Ryan} R. A. Ryan,
\textit{Introduction to tensor products of Banach spaces}. 
Springer Monographs in Mathematics. Springer-Verlag London, Ltd., London, 2002. xiv+225 pp.



\bibitem{S} S. Sakai,
\textit{On the reduction theory of von Neumann}. 
Bull. Amer. Math. Soc. 70 (1964), 393-398.

\bibitem{Suz} Y. Suzuki,
\textit{Simple equivariant $C^*$-algebras whose full and reduced crossed products coincide}.
J. Noncommut. Geom. 13 (2019), no. 4, 1577-1585.

\bibitem{Takemoto} H. Takemoto,
\textit{On the weakly continuous constant field of Hilbert space and its application to the reduction theory of von Neumann algebra}. 
Tohoku Math. J. (2) 28 (1976), no. 3, 479-496.

\bibitem{Takesaki} M. Takesaki,
\textit{Theory of operator algebras I}.
Encyclopaedia of Mathematical Sciences, vol.\ 124. Springer-Verlag, Berlin 2003. Reprint of first (1979) edition.

\bibitem{Takesaki2} M. Takesaki,
\textit{Theory of operator algebras II}.
Encyclopaedia of Mathematical Sciences, vol.\ 125. Springer-Verlag, Berlin 2003.



\bibitem{W} D. P. Williams,
\textit{Crossed products of $C^*$-algebras}. 
Mathematical Surveys and Monographs, 134. American Mathematical Society, Providence, RI, 2007 xvi+528 pp.


\bibitem{Zettl} H. Zettl,
\textit{A characterization of ternary rings of operators}.
Adv. Math. 48 (1983), 117-143.

\bibitem{Zim} R. J. Zimmer,
\textit{Amenable ergodic group actions and an application to Poisson boundaries of random walks}.
J. Funct. Anal. 27 (1978), 350-372.




\end{thebibliography}
\end{document}